\newif\ifarxiv
\definecolor{darkblue}{rgb}{0,0,0.5}
\numberwithin{equation}{section}
\renewcommand{\includegraphics}[1]{\framebox{Graphics Placeholder}}
\renewcommand{\includegraphics}[2][1]{\framebox{Graphics Placeholder}}
\newcommand{\smfrac}[2]{{\textstyle \frac{#1}{#2}}}
\newcommand{\half}{\mathstrut^{1\!\!}/_{\!2}}
\newcommand{\threehalfs}{\mathstrut^{3\!\!}/_{\!2}}
\newcommand{\oprod}{\sideset{}{^\circ}\prod}
\newcommand{\smalloprod}{\sideset{}{^\circ}{\textstyle \prod}}
\renewcommand{\paragraph}[1]{\subsubsection{#1}}
\newcommand{\<}{\langle}
\renewcommand{\>}{\rangle}
\newcommand{\weakto}{\rightharpoonup}
\newcommand{\del}{\delta}
\newcommand{\ddel}{{\delta^2\hspace{-1pt}}}
\newcommand{\dddel}{{\delta^3\hspace{-1pt}}}
\definecolor{mlcol}{rgb}{0, 0.4, 0}
\definecolor{ascol}{rgb}{0.7, 0, 0}
\newcommand{\eps}{\epsilon}
\newcommand{\C}{{\rm C}}
\renewcommand{\i}{{\rm i}}
\newcommand{\bc}{{\rm bc}}
\newcommand{\lin}{{\rm lin}}
\newcommand{\err}{{\rm err}}
\newcommand{\sgn}{{\rm sgn}}
\newcommand{\argmin}{\mathop{\mathrm{argmin}}}
\newcommand{\dd}{{\rm d}}
\newcommand{\du}{{\rm d}u}
\newcommand{\dv}{{\rm d}v}
\newcommand{\dx}{{\rm d}x}
\newcommand{\dy}{{\rm d}y}
\newcommand{\bbC}{{\mathbb C}}
\newcommand{\bbN}{{\mathbb N}}
\newcommand{\bbR}{{\mathbb R}}
\newcommand{\bbZ}{{\mathbb Z}}
\newcommand{\calA}{{\mathcal A}}
\newcommand{\calB}{{\mathcal B}}
\newcommand{\calD}{{\mathcal D}}
\newcommand{\calL}{{\mathcal L}}
\newcommand{\calP}{{\mathcal P}}
\newcommand{\calU}{{\mathcal U}}
\newcommand{\tildeu}{{\tilde{u}}}
\newcommand{\per}{{\rm per}}
\newcommand{\free}{{\rm free}}
\newcommand{\mF}{{\sf F}}
\newlength{\dhatheight}
\newlength{\dtildeheight}
\newcommand{\ttilde}[1]{%
    \settoheight{\dtildeheight}{\ensuremath{\tilde{#1}}}%
    \addtolength{\dtildeheight}{-0.15ex}%
    \tilde{\vphantom{\rule{1pt}{\dtildeheight}}%
    \smash{\tilde{#1}}}}
\begin{document}

\title{Accuracy of computation of crystalline defects at finite temperature}

\author{Alexander V. Shapeev\thanks{University of Minnesota. AS was supported in part by the
DOE Award DE-SC0002085. (ashapeev@umn.edu)}~
and
Mitchell Luskin\thanks{University of Minnesota. ML was supported in part by the NSF PIRE Grant OISE-0967140,
NSF Grant 1310835, DOE Award DE-SC0002085, AFOSR Award
FA9550-12-1-0187, and ARO MURI Award W911NF-14-1-0247.
(luskin@umn.edu)}}


\date{\today}




\maketitle

\begin{abstract}
The present paper aims at developing a theory of computation of crystalline defects at finite temperature.
In a one-dimensional setting we introduce Gibbs distributions corresponding to such defects and rigorously establish their asymptotic expansion.
We then give an example of using such asymptotic expansion to compare the accuracy of computations using the free boundary conditions and using an atomistic-to-continuum coupling method.

\end{abstract}

\section{Introduction}

Many critical materials phenomena are determined by materials defects.
For instance, plasticity in crystals is determined by motions of dislocations, while the response to radiation damage is determined by the motion of vacancies and interstitials.
Therefore predicting such properties computationally requires computing defects.

There has been considerable progress recently in developing a numerical analysis theory of defect computation at zero temperature 
(we refer to recent a review \cite{LuskinOrtner2013acta} and
 more recent papers \cite{bqce12,EhrlacherOrtnerShapeev2013preprint,LiLuskinOrtnerEtAl2014,OrtnerShapeev2013}), as well as new numerical 
methods whose accuracy could be rigorously quantified \cite{bqce11,MakridakisMitsoudisRosakis2012,OlsonBochevLuskinEtAl,OrtnerZhang2013consistent,OrtnerZhang2012consistent,Shapeev2011,Shapeev2012}.
Efforts towards rigorous analysis of the accuracy of computation of defects at finite temperature, however, are just beginning \cite{LegollOrtnerZhang}.

The present work aims at developing a theory of finite-temperature defect computation.
In particular, we were motivated by the so-called hot-QC method \cite{DupuyTadmorMillerEtAl2005, TadmorLegollKimEtAl2013,KimLuskinPerezEtAl2014hyper}.
This method systematically reduces the degrees of freedom of a large system to a small system, aiming at resolving atoms only near a defect, while capturing some effective interaction of these atoms with the distant atoms (as opposed to just truncating a domain and performing a full atomistic simulation).
The tool for deriving such a method is the expansion of Gaussian-like integrals in terms of powers of small temperature, and is well-justified for systems of fixed size.
However the accuracy of such methods in the thermodynamic limit (when the system size grows to infinity) has not been rigorously studied.

In this paper, under assumptions of one dimension, relatively short-range interaction (essentially, next nearest-neighbor interaction), and the free boundary conditions, we give a definition of Gibbs measures for defects in an infinite lattice.
We then study the accuracy of approximation of such infinite-dimensional Gibbs measures by their finite-dimensional counterparts associated to different computational methods.
In essence, we develop a rigorous asymptotic expansion of such Gibbs measures and establish the convergence of the expansion terms.
Since the expansion terms have a simpler structure than the original Gibbs measure, it becomes possible to quantify the error for small, but finite, temperatures.
This may be especially useful in two or three spatial dimensions: although this work does not give a rigorous justification for this case, we propose that one could simply \emph{conjecture} such expansions are valid in two or three dimensions, and hence study the convergence of the corresponding expansion terms.

The structure of the paper is as follows.
In Section \ref{sec:defect}, we introduce some definitions, and in Section \ref{sec:defect-at-nonzero-temperature} we formulate the main result, 
which we prove in Sections \ref{sec:finite_lattice} and \ref{sec:infinite}.
In Section \ref{sec:applications}, we give an example of how to apply the developed theory for two computational methods.
We summarize and discuss the results of the paper in Section \ref{sec:discussion-and-conclusion}.

\section{Main Result}\label{sec:main}

\subsection{Atomistic Model and a Defect}\label{sec:defect}

Instead of working with atomistic displacements $\tilde{u}$, we will work with a discrete strain $u:\bbZ\to\bbR$, $u(\xi) = \tildeu(\xi+\half)-\tildeu(\xi-\half)$. 
We let $\calB_N := \{-N, \ldots, N\}$ ($N\in\bbZ_+$) and $\calB_\infty := \bbZ$ index nearest-neighbor bonds.
Let
\[
\calU_{N} := \bbR^{\calB_N}
\qquad \forall N\in\bbZ_+\cup\{\infty\}
\]
be the space of strains.
The lattice (of non-boundary atoms) is then
\[
\calL_N := \{ -N+\smfrac12, \ldots, N-\smfrac12 \}
\qquad\text{and}\qquad
\calL_\infty := \bbZ + \smfrac12
.
\]

We assume the energy of the system has the form
\begin{align*}
	E_{\theta,N}(u)
:=~&
	\calP(u) +
	\sum_{\xi\in\calL_N} V(u_{\xi-\half}, u_{\xi+\half} )
	+ V_{\theta}^{\bc}(u_N)
	+ V_{\theta}^{\bc}(u_{-N})
\qquad \forall N<\infty,
\\
	E_\infty(u)
:=~&
	\calP(u) +
	\sum_{\xi\in\calL_\infty} V(u_{\xi-\half}, u_{\xi+\half} )
.
\end{align*}
Here $\calP(u)$ models a defect and is assumed to be localized within $\calB_{K_0}$, i.e., $\calP(u) = \calP(u|_{\calU_{K_0}})$, where $K_0$ is the size of the defect core.
The interaction potential $V(x,y)$, $(x,y)\in\bbR^2$, models the atomistic interactions and
\[
V^\bc_\theta(x) = V^{\bc,(0)}(x)
+ \theta V^{\bc,(1)}(x) +...
+ \theta^{M-1} V^{\bc,(M-1)}(x)
+ \theta^{M} V^{\bc,(M^+)}_\theta(x)
,\quad x\in \bbR
\]
models the boundary conditions for some $M\in\bbN$, where the last term $V^{\bc,(M^+)}_\theta(x)$ gathers the $M$-th order term and all higher-order terms.

One choice of $\calP$ would be restricting $\calP=0$ near a given local minimum of the energy and $\calP=+\infty$ far from it, thus effectively restricting the Gibbs distribution to a neighborhood of the defect---see below for a discussion of the assumption \eqref{eq:defect-global}.
Another example of $\calP$ would be an interstitial, e.g., at some site $\xi_0$ whose site energy is $V^{\rm i}(x,y)$.
Then one chooses $\calP(u) = V^{\rm i}(u_{\xi_0-\half},u_{\xi_0+\half}) - V(u_{\xi_0-\half},u_{\xi_0+\half})$.
For more discussion on how to model various defects with a such a potential $\calP$, we refer to \cite{EhrlacherOrtnerShapeev2013preprint, LiOrtnerShapeevEtAl2014}.

The two examples of the boundary condition we will consider are:
\begin{enumerate}
\item Free BCs: $V^{\bc}_\theta \equiv 0$
\item A/C coupling-type B.C.: $V^{\bc}_\theta$ comes from eliminating the continuum DoFs as will be introduced in Section \ref{sec:applications:QC}.
Such methods were proposed in \cite{DupuyTadmorMillerEtAl2005, TadmorLegollKimEtAl2013}.
\end{enumerate}
In particular, the latter boundary condition would depend on $\theta$---hence we allow such dependence in $V^\bc_\theta(y)$.

\subsubsection*{Assumptions}

We assume the existence of the subject of our investigation, a defect, described by the corresponding strain $u^*_\infty\in\ell^2(\bbZ)$.
\emph{We will identify $u^* = u^*_\infty$ where it will cause no confusion.
In this case the $n$-th component of $u^*$ will be denoted as $u^*_{,n}$.
}%
Note that $u^*\in\ell^2(\bbZ)$ implies $u^*_{,n}\to 0$ as $n\to\infty$.
Also, we assume that the defect is point-symmetric, $u^*_{,-n} = u^*_{,n}$, for the sole reason of reducing technicalities in the proofs.

Next, we make some regularity assumptions on the interaction potentials, $\calP$, $V$ and $V^\bc_\theta$, near the defect and assumptions on the growth of potentials far from the defect (to effectively restrict a system to a neighborhood of a defect).

In particular, we assume that the potentials can be $+\infty$, but are finite and have a certain number (depending on $M\in\bbN$) of derivatives near the defect $u^*$:
\begin{align}
\label{eq:orig_ass:V_meas}
\calP, V, V_\theta^{\bc,(m)}, V_\theta^{\bc,(M^+)} \text{ are}& \text{ measurable with values in $\overline{\bbR}$},
\\ \label{eq:orig_ass_X}
\{(x,y) : V(x,y)<+\infty \}=~& \calA\times \calA
\text{ for some $\calA\subset \bbR$}
,
\\
\label{eq:orig_ass:P_bound}
\calP(u)
\geq~&
-C_\calP (1+\|u\|),
\qquad\forall u\in\calU_{K_0},
\\
\label{eq:orig_ass:P_bound_loc}
\|\calP\|_{\C^{2M+3}(B_{r_V})}
\leq~&
c_V,
\\
\label{eq:orig_ass:V_bound}
\|V\|_{\C^{2M+3}(B_{r_V})}
\leq~&
c_V,
\\ \label{eq:orig_ass:Vbc_bound}
\|V^{\bc,(m)}\|_{\C^{2M-2 m+3}(B_{r_V})}
\leq
c_V,
\qquad&
\|V_{\theta}^{\bc,(M^+)}\|_{C(B_{r_V})}
\leq
c_V,
\end{align}
for all $m=0,\ldots,M-1$ and $\theta>0$,
for some $c_V>0$ and $r_V>\|u^*\|_{\ell^{\infty}}$.
Here $B_r$ denotes a ball with radius $r$ and center $0$ in some metric space which will be clear from the context, $\overline{\bbR} = \bbR\cup\{\pm\infty\}$, and measurably is understood in the Lebesgue's sense.
Allowing $V$ or $\calP$ to be $+\infty$ away from the defect effectively leads to restricted Gibbs distributions \cite{KimLuskinPerezEtAl2014hyper,TadmorLegollKimEtAl2013}.
Note that \eqref{eq:orig_ass_X} is a technical assumption that means, essentially, that an admissible region for $V$ is $\calA\times\calA$, where $\calA$ is some set of admissible strains (e.g., one may choose $\calA=\{x:x>0\}$);

Next, we fix the energy of $u=0$ to be zero:
\begin{align} \label{eq:orig_ass:Vzero}
\calP(0) =  V(0,0) = V^\bc_\theta(0)
=~&
0.
\end{align}
Together with the regularity assumptions this yields that $E_\infty \in \C^{2M+3}(B_{r_V})$ (see \cite{EhrlacherOrtnerShapeev2013preprint, LiOrtnerShapeevEtAl2014, OrtnerTheil2013,LiOrtnerShapeevEtAl2014} for the arguments establishing such regularity of $E_\infty$, and also the proof of Lemma \ref{lem:zero_temp_approx} for the uniform regularity of $E_{0,N}$).

The following assumptions state quadratic growth of the potentials away from $u=0$,
\begin{align} \label{eq:orig_ass:V_growth}
V(y,x) = V(x, y)
\geq~&
\smfrac{\gamma_V}{2} (x^2+y^2),
\qquad\forall (x,y)\in\bbR^2,
\\ \label{eq:orig_ass:Vbc_growth1}
V^{\bc,{(0)}}\equiv 0 \text{~~or~~} V^{\bc,{(0)}}(x) \geq~& \smfrac{\gamma_\bc}{2} x^2,
\qquad\forall x\in\bbR,
\\ \label{eq:orig_ass:Vbc_growth2}
V^{\bc,(m)}(x) \geq -c_V,
\quad
V_{\theta}^{\bc,(M^+)}(x) \geq~& -c_V,
\qquad\forall x\in\bbR,
\end{align}
for some positive $\gamma_V$, $\gamma_\bc$, and $c_V$,
and therefore these assumptions more restrictive.
Indeed realistic interaction potentials do not satisfy them, however the quadratic growth assumptions are, on the one hand, a standard way of modeling the absence of reconfiguration of atoms \cite{Funaki2005stochastic}, and on the other hand are satisfied for the restricted Gibbs distribution approach.
Also, note that the first part of \eqref{eq:orig_ass:V_growth} states the point symmetry of the interaction potential, which is dictated by the underlying physical symmetry.

The next assumption on $V^\bc$ states that $u=0$ is compatible with the boundary condition at zero temperature:
\begin{align} \label{eq:orig_ass:Vbc_force}
V_y(0,0)+V^{\bc,(0)}_x(0) =~& 0.
\end{align}
Both the free and the a/c coupling-type boundary conditions we consider satisfy this requirement.
Moreover,
we claim that this assumption can in principle be removed at a cost of extra technicalities for dealing with the resulting finite-magnitude boundary layer (however, even with this assumption there are $O(\theta)$ boundary layers).

The last assumption,
\begin{align}
\label{eq:defect-global}
E_{\infty}(u^*_\infty + v)
-
E_{\infty}(u^*_\infty)
\geq~&
\smfrac{\gamma_E}{2} \|v\|_{\ell^2}^2
\qquad \forall v\in\ell^2(\bbZ)
,
\end{align}
for some $\gamma_E>0$, requires some extra discussion.
We note that this is, essentially, the only assumption on the actual defect---the previous assumptions were on the atomistic material itself.
This assumption enforces that the defect is {\it the only} minimum of $E_{\infty}$, and is formulated similarly to the assumptions \eqref{eq:orig_ass:V_growth}--\eqref{eq:orig_ass:Vbc_growth2}.
We note that if one assumes $u^*$ to only be a strongly stable equilibrium, i.e.,
\begin{align*}
\<\del E_{\infty}(u^*_\infty), v\>
=
0,
\quad
\<\ddel E_{\infty}(u^*_\infty) v, v\>
\geq
\gamma_E \|v\|_{\ell^2}^2,
\qquad \forall v\in\ell^2(\bbZ),
\end{align*}
where $\del F(u)$ denotes the first variation, or the functional derivative of $F$ at the point $u$, and hence $\<\del F(u), v\>$ denotes the directional derivative along $v$,
then \eqref{eq:defect-global} is true in a neighborhood of the defect, and hence one can define
\[
\tilde{\calP}(u) := \begin{cases}
\calP(u), & \|u-u^*\|\leq \rho, \\
+\infty, & \|u-u^*\|> \rho,
\end{cases}
\]
for some $\rho>0$, effectively restricting the Gibbs measure to such a neighborhood.
We also note that \eqref{eq:orig_ass:V_growth}--\eqref{eq:orig_ass:Vbc_growth2} automatically implies \eqref{eq:defect-global} with $\calP\equiv 0$, which corresponds to a lattice without defects.

%

Finally, we note that \eqref{eq:orig_ass:V_growth} implies $V_{xx}(0,0)=V_{yy}(0,0)>0$.
We hence fix the spatial scale by letting $V_{xx}(0,0)=V_{yy}(0,0)=1$.
It is then easy to derive from \eqref{eq:orig_ass:V_growth} that
\begin{equation}\label{eq:alpha_def}
\alpha := V_{xy}(0,0)
\end{equation}
satisfies $|\alpha|<1$.
We also introduce $\kappa=\sqrt{1-\alpha^2}$.

\subsection{Nonzero Temperature}\label{sec:defect-at-nonzero-temperature}

In order to study how well a finite lattice of atoms approximates the infinite lattice under a small but nonzero temperature, we introduce the corresponding Gibbs measures, or more generally, distributions.

\subsubsection*{Finite Lattice}
A defect at a nonzero temperature is described by its Gibbs distributions $\mu_{\theta,N}$ and $\mu_{\theta,\infty}$.
For a finite $N\in\bbN$, we define the Gibbs measure $\mu_{\theta,N}$ by its action on an observable $A\in \C^{2M}(\calU_N)$ as
\[
\<\mu_{\theta,N}, A\>_{\calU_N}
:=
\frac{%
	\int_{\calU_N} A(v) e^{-\theta^{-1} E_{\theta, N}(v)} \dv
}{%
	\int_{\calU_N} e^{-\theta^{-1} E_{\theta, N}(v)} \dv
}
=
\frac{%
	\int_{\calU_N} A(\sqrt{\theta} v) e^{-\theta^{-1} E_{\theta,N}(\sqrt{\theta} v)} \dv
}{%
	\int_{\calU_N} e^{-\theta^{-1} E_{\theta,N}(\sqrt{\theta} v)} \dv
}
.
\]
Thus, $\mu_{\theta,N}$ belongs to the space of distributions $\C^{-2M}_0(\calU_N) = (\C^{2M}(\calU_N))^*$, where $\C^m(X)$ is the space of functions with $m$ bounded uniformly continuous derivatives on $X$.

\begin{remark}
\begin{itemize}
\item[(1)]
We use the notation for distributions $\<\mu_{\theta,N}, A\>_{\calU_N}$ instead of that for measures $\int_{\calU_N} A \,\dd\mu_{\theta,N}$, since some related objects will not be measures.

\item[(2)]
We chose $\C^{2M}(\calU_N)$ for the test space instead of, e.g., $\C^{2M}_0(\calU_N)$, because in order to study a limit $N\to\infty$ it will be important that $\C^{2M}(\bbR^n)\subset \C^{2M}(\bbR^m)$ for $n<m$.
\end{itemize}
\end{remark}

We introduce the following norms and seminorms for $\mu\in \C^{-L}_0(\calU_N)$:
\[
|\mu|_{\C^{-L}_0(\calU_K)} := \sup_{A\in\\C^{L}(\calU_K)} \<\mu, A\>_{\calU_N}
\qquad\text{and}\qquad
\|\mu\|_{\C^{-L}_0} := |\mu|_{\C^{-L}_0(\calU_N)}.
\]

\subsubsection*{Infinite Lattice}

For an infinite lattice, we introduce the space of test functions
\[
\calD
:=
\bigcup_{n\in\bbN} \C^{2M}(\calU_{n}),
\]
where we implicitly assume the natural embedding $\calU_{n}\subset\calU_m$ for $n<m\leq\infty$.
We say that $f_n \to f$ in $\calD$ if $f$ and all $f_n$ belong to some $\calU_K$ and $\|f_n-f\|_{\C^{2M}(\calU_K)} \to 0$.

We then define $\mu_{\theta,\infty} \in \calD^*$ as the weak limit of $\mu_{\theta,N}$:
\[
\<\mu_{\theta,\infty}, A\>_{\calU_\infty}
:=
\lim_{N\to\infty} \<\mu_{\theta,N}, A\>_{\calU_N}
\qquad \forall A\in\calD
.
\]
We will omit the subscript $\calU_N$ in the duality pairing notation $\<\bullet, \bullet\>$.

Finally, note that existence of the limit of Gibbs measures, $\mu_{\theta,\infty}$, in the space of distributions does not immediately imply that $\mu_{\theta,\infty}$ itself is a measure.

\subsubsection*{Main Result}

The main result of this work is the following theorem.
\begin{theorem}\label{th:uniform_expansion}
\begin{itemize}
\item[(a)]
For each $N\in\bbN$ there exist distributions $\mu_{N}^{(m)}$ ($m=0,1,\ldots,M-1$) and $\mu_{\theta,N}^{(M^+)}$ such that
\begin{align} \label{eq:expansion_main}
\mu_{\theta,N}
=~&
\sum_{m=0}^{M-1}
	\theta^{m} \mu_{N}^{(m)}
+
	\theta^{M} \mu_{\theta,N}^{(M^+)}
\qquad \forall\theta>0
.
\end{align}
\item[(b)]
There exist unique distributions $\mu_{\theta,\infty}$, $\mu_{\infty}^{(m)}$, and $\mu_{\theta,\infty}^{(M^+)}$, independent of $V^\bc$ such that for any $K$ and $A\in \C^{2M}(\calU_K)$,
\[
\<\mu_{\theta,N}, A\>
\to
\<\mu_{\theta,\infty}, A\>
,\quad
\<\mu_{N}^{(m)}, A\>
\to
\<\mu_{\infty}^{(m)}, A\>
,\quad\text{and}\quad
\<\mu_{\theta,N}^{(M^+)}, A\>
\to
\<\mu_{\theta,\infty}^{(M^+)}, A\>
\]
as $N\to\infty$ for all $\theta>0$.
Moreover,
\begin{align} \notag
\mu_{\theta,\infty}
=~&
\sum_{m=0}^{M-1}
	\theta^{m} \mu_{\infty}^{(m)}
+
	\theta^{M} \mu_{\theta,\infty}^{(M^+)}
.
\end{align}
\item[(c)]
There exist $\theta_0>0$ and $C=C(K)$ both independent of $N$ such that
\begin{align}\label{eq:uniform_expansion:bounds}
\max_{0\leq m<M \rfloor} \big|\mu_{N}^{(m)}\big|_{\C^{-L}_0(\calU_K)}\leq C(K)
\quad\text{and}\quad
\big|\mu_{\theta,N}^{(M^+)}\big|_{\C^{-L}_0(\calU_K)}\leq C(K)
\end{align}
for all $\theta\in(0,\theta_0]$ and $N\in\bbN\cup\{\infty\}$.

\item[(d)] The terms $\mu_{N}^{(m)}$ are linear combinations of the Dirac delta function and its derivatives at $u^*_N$ which is the global minimum of $E_{0,N}$. Furthermore,
\begin{align} \label{eq:mu0_explicit}
\<\mu_{N}^{(0)}, A\> =~& A(u_N^*),
\\ \label{eq:mu1_explicit}
\<\mu_N^{(1)}, A\> =~&
\smfrac12 \ddel A(u_N^*) \!:\! H^{-1}_N
-
\del A(u_N^*) \cdot H_N^{-1} \big(
	\del E_N^{(1)}(u_N^*)+\smfrac12\dddel E_{0,N}(u_N^*) \!:\! H_N^{-1}
\big)
\end{align}
for all $N\in\bbN\cup\{\infty\}$,
where we denote $E_N^{(1)}(u) := V^{\bc,(1)}(u_N) + V^{\bc,(1)}(u_{-N})$ and $H_N := \ddel E_{0,N}(u_N^*)$.
\end{itemize}
\end{theorem}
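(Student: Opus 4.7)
The plan is to derive the expansion by a rescaled Laplace expansion of the finite-lattice Gibbs integral and then pass to the thermodynamic limit $N\to\infty$ using the uniform stability provided by \eqref{eq:defect-global}.

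First, for fixed $N$ I would establish (a) and (d). After the substitution $v = u_N^* + \sqrt{\theta}\, w$, the integrand becomes $e^{-\theta^{-1} E_{\theta,N}(u_N^* + \sqrt\theta w)}$. Since $u_N^*$ is the global minimum of $E_{0,N}$, $\del E_{0,N}(u_N^*)=0$, and Taylor expansion about $u_N^*$ together with the expansion of $V^\bc_\theta$ in $\theta$ gives
\begin{align*}
\theta^{-1}\bigl(E_{\theta,N}(u_N^* + \sqrt\theta\,w) - E_{0,N}(u_N^*)\bigr) = \smfrac{1}{2}\langle H_N w, w\rangle + \sum_{k=1}^{2M+1} \theta^{k/2} P_{k,N}(w) + \theta^{M+\smfrac{1}{2}} R_{\theta,N}(w),
\end{align*}
where $P_{k,N}$ is a polynomial built from the higher derivatives of $E_{0,N}$ and from $V^{\bc,(1)},\ldots,V^{\bc,(M-1)}$ evaluated at $u_N^*$, and $R_{\theta,N}$ is the Taylor remainder (including the $V_\theta^{\bc,(M^+)}$ contribution). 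Exponentiating the correction and expanding formally in $\sqrt\theta$ writes numerator and denominator as Gaussian expectations of polynomials in $w$; by parity of the Gaussian against odd monomials, the half-integer powers cancel in the ratio, yielding the integer expansion \eqref{eq:expansion_main}. The leading-order term evaluates $A$ at $w=0$, i.e.\ at $u_N^*$, giving \eqref{eq:mu0_explicit}, and collecting the $O(\theta)$ contributions against $\del A(u_N^*)$ and $\ddel A(u_N^*)$ reproduces \eqref{eq:mu1_explicit}; the higher terms $\mu_N^{(m)}$ are by construction linear combinations of derivatives of the Dirac measure at $u_N^*$.

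For the uniform bounds in (c), the crucial input is that \eqref{eq:defect-global}, combined with locality and the convergence $E_{0,N}\to E_\infty$, implies $\langle H_N v, v\rangle \ge \gamma_E \|v\|_{\ell^2}^2$ uniformly in $N$. Thus the quadratic part of the action is a strongly coercive Gaussian on $\calU_N$ whose moments of any fixed-degree polynomial in $w$ are controlled purely by $\gamma_E$ and the $\C^{2M}$-seminorm of $A$ on $\calU_K$, independent of $N$. The growth conditions \eqref{eq:orig_ass:V_growth}--\eqref{eq:orig_ass:Vbc_growth2} together with the derivative bounds \eqref{eq:orig_ass:P_bound_loc}--\eqref{eq:orig_ass:Vbc_bound} then dominate the remainder $R_{\theta,N}$ uniformly for $\theta\in(0,\theta_0]$ and $N$. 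For (b), I would fix $K$ and $A\in\C^{2M}(\calU_K)$ and note that each $\mu_N^{(m)}$ becomes a contraction of derivatives of $A$ at $u_N^*$ with tensors built from $H_N^{-1}$ and derivatives of $V$, $\calP$ and the $V^{\bc,(m)}$. Since $H_N$ is a uniformly positive, nearest-neighbour banded operator, $H_N^{-1}$ decays exponentially off-diagonal at rate governed by $\kappa=\sqrt{1-\alpha^2}$; hence its $\calB_K\times\calB_K$ block and $u_N^*|_{\calB_K}$ both converge at rate $O(e^{-c(N-K)})$ to their $N=\infty$ counterparts, while the $V^{\bc,(m)}$-contributions (localised at sites $\pm N$) vanish in the limit. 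This gives both $\mu_N^{(m)}\to\mu_\infty^{(m)}$ and the $V^\bc$-independence of the limit, and the uniform bound from (c) promotes this to convergence of $\mu_{\theta,N}$ and of the remainder.

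The main obstacle is the uniform-in-$N$ remainder bound in (c): naive Taylor estimates pick up factors growing with $\dim\calU_N=2N+1$, so these must be absorbed by the Gaussian. The remedy is to combine the $N$-uniform coercivity of $H_N$ (Gaussian concentration in $w$) with the locality of $V$ and the exponential decay of $H_N^{-1}$ so that the multilinear contractions arising in each moment stay summable uniformly in $N$. Propagating this combined concentration-plus-exponential-decay estimate through each order of the Taylor remainder, and in particular through the $V_\theta^{\bc,(M^+)}$ piece which is only assumed $\C^0$-bounded, is the technical heart of the argument.
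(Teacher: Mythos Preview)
Your outline for parts (a) and (d) is correct and matches the paper: a Laplace expansion around $u_N^*$ for fixed $N$ gives the integer-power expansion with the explicit leading terms. The expansion terms $\mu_N^{(m)}$ can indeed be bounded uniformly in $N$ via exponential decay of $H_N^{-1}$, essentially as you describe.

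The substantive divergence from the paper is in how you handle the \emph{remainder} bound in (c). The paper does not attempt to control the $N$-dimensional Laplace remainder directly. Instead, for $A\in\C^{2M}(\calU_K)$ it first \emph{integrates out} the degrees of freedom $v_{K+1},\ldots,v_N$ (and their mirror images) using a transfer-operator representation,
\[
\langle\mu_{\theta,N},A\rangle
=
\frac{\int_{\calU_K} A(u_N^*+\sqrt\theta\,v)\,e^{-\theta^{-1}E_{N,K}(\sqrt\theta v)}\,\hat\varphi_{\theta,N,K}(v_K)\,\hat\varphi_{\theta,N,K}(v_{-K})\,dv}
     {\int_{\calU_K} e^{-\theta^{-1}E_{N,K}(\sqrt\theta v)}\,\hat\varphi_{\theta,N,K}(v_K)\,\hat\varphi_{\theta,N,K}(v_{-K})\,dv},
\]
and then runs the Laplace expansion on this \emph{fixed} $(2K+1)$-dimensional integral. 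All $N$-dependence is pushed into the boundary weights $\hat\varphi_{\theta,N,K}$, whose expansion coefficients are shown to be bounded in $L^2$ uniformly in $N$ by a contraction argument for the normalised transfer operator (the leading eigenvalue is simple, so the projected operator has norm $\mu<1$). This is the mechanism that kills the dimension dependence in the remainder.

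Your proposed remedy---``Gaussian concentration plus exponential decay of $H_N^{-1}$''---is adequate for the polynomial moments that make up the $\mu_N^{(m)}$, but it is not clear it closes for the remainder $Q_\theta^{(L^+)}$. The paper's own Gaussian-expansion lemma produces a remainder bound with a constant that \emph{does} depend on the ambient dimension $n$, and the remainder is not a local polynomial but a genuine Taylor residue of a nonlinear function of all $2N+1$ variables (including the merely $\C^0$ piece $V_\theta^{\bc,(M^+)}$). Turning your heuristic into a proof would require an argument of cluster-expansion type that you have not supplied; as written, this is a gap.

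There is a second gap in your argument for (b). Convergence of each $\mu_N^{(m)}$ together with a uniform bound on $\mu_{\theta,N}^{(M^+)}$ does \emph{not} imply convergence of $\mu_{\theta,N}$ (or of the remainder) for fixed $\theta$: uniform bounds give only subsequential limits. The paper closes this by proving $\langle\mu_{\theta,N},A\rangle\to\langle\mu_{\theta,\infty},A\rangle$ \emph{directly}, again via the transfer operator: the iterated kernel $P^{\circ(N-K)}[p_\theta^{\bc}]$ converges in $L^2$ to the (unique, positive) top eigenfunction of the transfer operator, which simultaneously yields the limit and its independence of $V^{\bc}$. Convergence of the remainder then follows by subtraction. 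Your route would need a separate argument to identify the limit of $\mu_{\theta,N}$.
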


\section{Applications}\label{sec:applications}

Theorem \ref{th:uniform_expansion} can be used in the following way.
Consider Gibbs measures corresponding to two different systems, described by $E_{\infty}(u)$ and $E_{\theta, N}(u)$.
We think of $E_{\infty}(u)$ as the exact energy of the infinite system and
$E_{\theta, N}(u)$ is the energy of the system which approximates the original infinite system.

Theorem \ref{th:uniform_expansion} gives us the following estimate:
\begin{align*}
|\<\mu_{\theta,N} - \mu_{\theta,\infty}, A\>|
=~&
	|A(u^*_N) - A(u^*_\infty)|
	+ \theta \Big|
		\smfrac12 \ddel A(u^*_N) \,:\, H_N^{-1}
		-
		\smfrac12 \ddel A(u^*_\infty) \,:\, H_\infty^{-1}
\\~&-
	\del A(u^*_N) \cdot \big(
	H_N^{-1} \big(\del E^{(1)}(u^*_N) + \smfrac12 H_N^{-1} \,:\, \dddel E^{(0)}(u^*_N)\big)
\big)
\\~&+
	\del A(u^*_\infty) \cdot \big(
	H_\infty^{-1} \big(\del E^{(1)}(u^*_\infty) + \smfrac12 H_\infty^{-1} \,:\, \dddel E^{(0)}(u^*_\infty)\big)
\big)
	\Big|
\\~&+
	O(\theta^2)
,
\end{align*}
where $H_N=\ddel E_{0,N}(u^*_N)$ and $H_\infty=\ddel E_\infty(u^*_\infty)$.

This quantity can be estimated further.
However, for the purpose of illustrating the proposed theory without too many technicalities,  we simplify the problem and assume that $u^*_\infty=0$, $u^*_N=0$, and $\calP(u)\equiv 0$.
That is, we test how well $\mu_{\theta,N}$ approximates $\mu_{\theta,\infty}$ on a uniform lattice (without defects).
The advantage of this simplification is that now the Hessians can be inverted explicitly.
We will hence be able to explicitly track the leading term in all our estimates, which will automatically ensure that our estimates are sharp.

The error estimate is hence rewritten as
\begin{align*}
\<\mu_{\theta,N} - \mu_{\theta,\infty}, A\>
=~&
	\theta \Big(
		\smfrac12 \ddel A(0) \,:\, H_N^{-1}
	-
	\del A(0) \cdot \big(
	H_N^{-1} \big(\del E^{(1)}(0) + \smfrac12 H_N^{-1} \,:\, \dddel E^{(0)}(0)\big)
\big)
\\~&
	-
		\smfrac12 \ddel A(0) \,:\, H_\infty^{-1}
	+
	\del A(0) \cdot \big(
	H_\infty^{-1} \big(\del E^{(1)}(0) + \smfrac12 H_\infty^{-1} \,:\, \dddel E^{(0)}(0)\big)
\big)\Big)
\\~&+
	O(\theta^2)
.
\end{align*}

In what follows, we omit most of the details of the calculations, which essentially reduce to summation of geometric sequences.

\subsection{Free BCs}

We interpret the operators as matrices in the canonical basis of $\calU_\infty = \bbR^{\calB_\infty}$.
In matrix notation, we have that
\[
(H_N^{-1})_{ij}
=
\begin{cases}
	\frac{
		\lambda ^{-i-j}
		\left(\lambda ^{2 i}+\lambda ^{2 N}\right)
		\left(\lambda^{2 (j+N)}+1\right)
	}{
		2 \kappa \left(1-\lambda ^{4 N}\right)
	} & j\leq i, \\
	\frac{
		\lambda ^{-i-j}
		\left(\lambda ^{2 j}+\lambda ^{2 N}\right)
		\left(\lambda^{2 (i+N)}+1\right)
	}{
		2 \kappa \left(1-\lambda ^{4 N}\right)
	} & j> i,
\end{cases}
\]
where $\lambda = -\frac{1-\kappa}{\alpha}$ (see \eqref{eq:alpha_def} for the definition of $\alpha$).

We then have that
\begin{align*}
\<\mu_{\theta,N}, A\>
=~&
	A(0)
	+ \smfrac12 \theta A''(0) \smfrac{1+\lambda^{2N}}{2\kappa (1-\lambda^{2N})}
	-
	\smfrac12 \theta A'(0)
	V_{xxx}(0,0)
	 \big(
		-\smfrac{1+\lambda}{2 \kappa^2 ( 1-\lambda)}
		+
		\lambda^N \, \smfrac{1+\lambda+\lambda^2+\lambda^3}{\kappa^2 ( 1-\lambda^3)}
		+ O(\lambda^{2N})
	\big)
\\~&
	-
	\smfrac12 \theta A'(0)
	V_{xxy}(0,0)
	 \big(
		\smfrac{(1+\lambda)(1+2\lambda)}{2 \kappa^2 (1-\lambda)}
		+
		\lambda^{N-1} \,
		\smfrac{(1+\lambda)
			(1+2\lambda+6\lambda^2+2\lambda^3+\lambda^4)}{2 \kappa ^2 (1-\lambda^3)}
		+ O(\lambda^{2N-1})
	\big)
	+
	O(\theta^2)
,
\end{align*}
where we assumed that $A(u) = A(u^*_0)$, i.e., is supported only on the central bond, $u^*_0$.

We note that the terms with $\lambda^{N}$ and $\lambda^{N-1}$ are related to the error induced by the boundary condition (decaying exponentially in this case) and $O(\lambda^{2N})$ and $O(\lambda^{2N-1})$ are the higher-order terms in this error.

Thus,
\begin{align*}
\<\mu_{\theta,N}-\mu_{\theta,\infty}, A\>
=~&
	-
	\smfrac12 \theta A'(0) \big[
	V_{xxx}(0,0)
		\lambda^N \, \err^{\rm fr}_1
	+
	V_{xxy}(0,0)
		\lambda^{N-1} \, \err^{\rm fr}_2
	\big]
	+
	O(\theta^2 + \theta \lambda^{2N-1})
,
\end{align*}
where
\[
\err^{\rm fr}_1 =
	\frac{1+\lambda+\lambda^2+\lambda^3}{\kappa^2 ( 1-\lambda^3)}
\qquad\text{and}\qquad
\err^{\rm fr}_2 =
		\frac{(1+\lambda)
			(1+2\lambda+6\lambda^2+2\lambda^3+\lambda^4)}{2 \kappa ^2 (1-\lambda^3)}
\]
are the respective error coefficients for the free boundary conditions.

\subsection{A Quasicontinuum Method}\label{sec:applications:QC}

\subsubsection*{Derivation}

We consider a system of size $M$ with free boundary conditions, but we want to eliminate (that is, integrate out) all the degrees of freedom outside the subsystem of size $N$ ($N\ll M$) by defining the following free energy \cite{TadmorLegollKimEtAl2013}
\begin{equation}\label{eq:applications:QC:Vbc_der1}
\hat{V}^\bc_{\theta}(u_N)
=
-\theta \log\bigg(
\int\cdots\int e^{-\theta^{-1}\sum_{\xi=N+\half}^{M-\half} V(u_{\xi-\half}, u_{\xi+\half} )} \dd u_{N+1}\ldots \dd u_{M}
\bigg).
\end{equation}
By doing this we reduced the system size, but we have not yet computationally simplified the problem.
Our first step in simplifying \eqref{eq:applications:QC:Vbc_der1} will be a so-called quasiharmonic approximation to $\hat{V}^\bc_\theta(u_N)$,
\begin{equation}\label{eq:applications:QC:Vbc_der2}
\tilde{V}^\bc_{u^\dagger,\theta}(u_N)
=
-\theta \log\bigg(
\int\cdots\int e^{-\theta^{-1} V(u_{N}, u_{N+1}) - \theta^{-1}\sum_{\xi=N+\threehalfs}^{M-\half} \tilde{V}^\lin_{u^\dagger,\xi}(u_{\xi-\half}, u_{\xi+\half})} \dd u_{N+1}\ldots \dd u_{M}
\bigg),
\end{equation}
where
\begin{align*}
\tilde{V}^\lin_{u^\dagger,\xi}(u^\dagger_{\xi-\half}+x,u^\dagger_{\xi+\half}+y)
=~& V(u^\dagger_{\xi-\half},u^\dagger_{\xi+\half}) + \<\del V(u^\dagger_{\xi-\half},u^\dagger_{\xi+\half}), (x,y)\>
\\~&+\smfrac12 \<\ddel V(u^\dagger_{\xi-\half},u^\dagger_{\xi+\half}) (x,y), (x,y)\>,
\end{align*}
and $u^\dagger\in\bbR^{\{N+1,\ldots,M\}}$ should be chosen as the minimizer of $\tilde{V}^\bc_{u^\dagger,\theta}(u_N)$ (which hence depends on $\theta$ and $u_N$).

We next restrict $u^\dagger_{N+1} = u^\dagger_{N+2} = \ldots = u^\dagger_{M} =: \mF$ in the spirit of finite elements (note that piecewise linear elements for the displacements corresponds to piecewise constant elements for the strain $u^\dagger$).
This defines $\ttilde{V}^{\bc}_{\mF,\theta}$ by a formula similar to \eqref{eq:applications:QC:Vbc_der2} with
\begin{align*}
\ttilde{V}^\lin_{\mF}(\mF+x,\mF+y)
=~& V(\mF,\mF) + \<\del V(\mF,\mF), (x,y)\>
+\smfrac12 \<\ddel V(\mF,\mF) (x,y), (x,y)\>.
\end{align*}
Note that
$\ttilde{V}^\bc_{\mF,\theta}(u_N)$ can be computed explicitly as
\begin{equation}\label{eq:applications:QC:Vbc_der3}
\ttilde{V}^\bc_{\mF,\theta}(u_N)
=
V(u_{N}, \mF)+
\sum_{\xi=N+\threehalfs}^{M-\half} V(\mF,\mF)
+ \frac{\theta}{2}\, \log(\det H_{u^\dagger,\mF,N,M}),
\end{equation}
where $H_{u^\dagger,\mF,N,M}$ can be thought of as an $(M-N)\times(M-N)$ matrix and is equal to a submatrix of $\ddel E_{0,M}(\mF)$ corresponding to the degrees of freedom given by $u_{N+1}, \ldots, u_{M}$.

The next step is to change the energy involving the last two terms of \eqref{eq:applications:QC:Vbc_der3} by the continuum free energy:
\[
V^\bc_{\mF,\theta}(u_N)
=
V(u_{N}, \mF)+
(M-N+1) W^{\free}_\theta(\mF),
\]
where $\mF$ is now chosen as the minimizer of $V^\bc_{\mF,\theta}$, and the continuum free energy density is
\begin{equation}\label{eq:Wfree}
W^\free_\theta(\mF)
:=
V(\mF,\mF)+\frac{\theta}{2} \, \log\Big(\frac{1}{2\pi\theta}\Big)
-\frac{\theta}{2} \,
\log \smfrac{V_{xx}(\mF,\mF)-\sqrt{(V_{xx}(\mF,\mF))^2-(V_{xy}(\mF,\mF))^2}}{(V_{xy}(\mF,\mF))^2},
\end{equation}
which is derived in Appendix \ref{sec:Wfree}.

In the limit $M-N\to\infty$, $\mF$ will just be the minimizer of $W^\free_\theta(\mF)$, which can be computed explicitly as
\[
\argmin(W^\free_\theta)
= \theta \big(
	-\smfrac{1}{4 (1+\alpha)\kappa} V_{xxx}(0,0)
	-\smfrac{2-2\kappa-\alpha}{4 (1+\alpha)\kappa} V_{xxy}(0,0)
\big)
+ O(\theta^2),
\]
where $\alpha = V_{xy}(0,0)$ and $\kappa=\sqrt{1-\alpha^2}$ (cf.~\eqref{eq:alpha_def}),
and hence
\begin{equation}\label{eq:applications:QC:Vbc}
V^\bc_\theta(x) =
\underbrace{V(x, 0)}_{=:V^{\bc,(0)}(x)}
+
\theta
\underbrace{V_y(x, 0)
\big(
	-\smfrac{1}{4 (1+\alpha)\kappa} V_{xxx}(0,0)
	-\smfrac{2-2\kappa-\alpha}{4 (1+\alpha)\kappa} V_{xxy}(0,0)
\big)}_{=:V^{\bc,(1)}(x)}
+ O(\theta^2),
\end{equation}
where we ignored the constant term.

\subsubsection*{Error Estimate}

Similar to the free BCs case, explicit calculations can be performed with the boundary conditions given by \eqref{eq:applications:QC:Vbc} and we can find that
\begin{align*}
\<\mu_{\theta,N}-\mu_{\theta,\infty}, A\>
=~&
	-
	\smfrac12 \theta A'(0) \big[
	V_{xxx}(0,0)
		\lambda^N \, \err^{\rm qc}_1
	+
	V_{xxy}(0,0)
		\lambda^{N-1} \, \err^{\rm qc}_2
	\big]
	+
	O(\theta^2 + \theta \lambda^{2N-1})
,
\end{align*}
where
\[
\err^{\rm qc}_1 =
	\frac{\lambda ^2 + \lambda^3}{\kappa ^2
	   (1-\lambda ^3)}
\qquad\text{and}\qquad
\err^{\rm qc}_2 =
		\frac{(1+\lambda)
			(\lambda+2\lambda^2+2\lambda^4+\lambda^5)}{2 \kappa ^2 (1-\lambda^3)}
\]
are the respective error coefficients for the QC method.

\subsection{Comparison}

First, we notice that the error for the QC is always smaller than that of the free BCs, i.e., $|\err^{\rm qc}_1|<|\err^{\rm fr}_1|$ and $|\err^{\rm qc}_2|<|\err^{\rm fr}_2|$.
This can be easily proved by comparing the respective polynomials in $\lambda$.

Second, by plotting the graphs of $\err_1$ and $\err_2$ for the two methods, see Figure \ref{fig:err}, we can see that the leading order error of the two methods differ by about an order of magnitude (unless the lattice is close the point of instability, $\alpha=\pm 1$).

\begin{figure}
\begin{center}
\hfill
\subfigure[]{\label{fig:err:1}\includegraphics{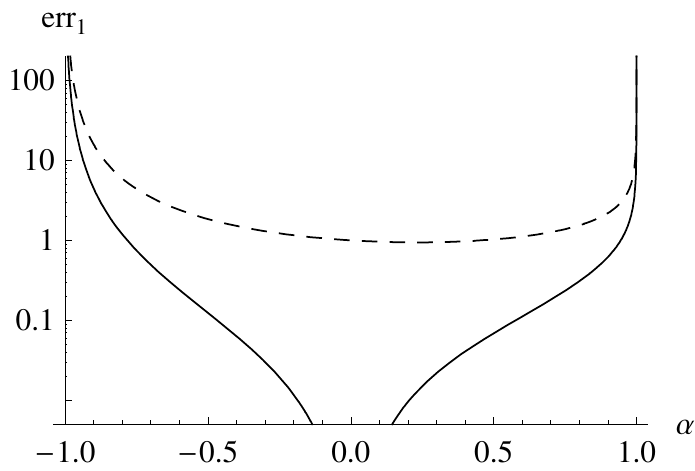}}
\hfill\hfill
\subfigure[]{\label{fig:err:2}\includegraphics{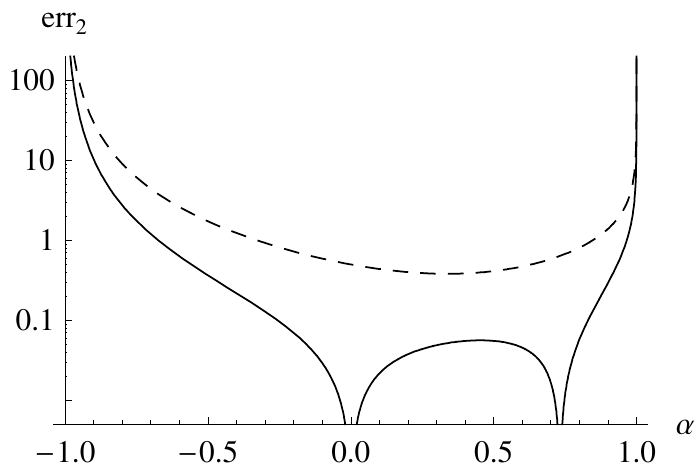}}
\hfill$\mathstrut$
\caption{Error coefficients $\err_1$ and $\err_2$ of the free BCs (dashed) and the QC method (solid). One can see that the leading order error for the QC is typically one or two orders of magnitude less than that of the free BCs (unless the lattice is close the point of instability, $\alpha=\pm 1$).}
\label{fig:err}
\end{center}
\end{figure}

\section{Proof for the Finite Lattice}\label{sec:finite_lattice}

In this section we prove the statements of Theorem \ref{th:uniform_expansion} for the case $N<\infty$, namely parts (a), (c, $N<\infty$), and (d, $N<\infty$).

In the proofs $C$ will denote a generic constant which is independent of $N$ and $\theta$.

\subsection{Zero Temperature Equilibrium}

The exact equilibrium $u_\infty^*$ can be approximated on a finite lattice by $u_N^*\in\ell^2(\calU_N)$ satisfying
\begin{equation}\label{eq:defect-N-def}
\<\del E_{0,N}(u_N^*), v\> = 0,
\qquad \forall v\in\calU_N
\end{equation}
and additionally requiring that $u_N^*$ is sufficiently close to $u^*$.
Lemma \ref{lem:Enk_convex} below implies that $u_N^*$ is also a global minimum of $E_{0,N}$ for $N$ large enough, which makes it consistent with the definition of $u_N^*$ made in Theorem \ref{th:uniform_expansion}(d).

\begin{lemma}\label{lem:zero_temp_approx}
There exists $\rho>0$ such that for $N$ large enough there is a unique solution $u_N^*$ to \eqref{eq:defect-N-def} within the ball $\|u^*_{N}-u^*|_{\calB_N}\|_{\ell^2}\leq \rho$. Furthermore, $\|u^*_{N}-u^*|_{\calB_N}\|_{\ell^2}\to 0$ as $N\to\infty$.
\end{lemma}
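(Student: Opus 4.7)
My plan is a standard consistency-plus-stability argument followed by a quantitative inverse function theorem applied to the nonlinear map $\Phi_N : u \mapsto \delta E_{0,N}(u)$ on $\calU_N$, seeded at $u^*|_{\calB_N}$. First I would quantify the residual $r_N := \delta E_{0,N}(u^*|_{\calB_N})$ and show $\|r_N\|_{\ell^2}\to 0$. Because $u^*$ is an equilibrium of $E_\infty$ and $\calP$ is supported inside $\calB_{K_0}\subset\calB_N$, every interior component of $r_N$ vanishes, so $r_N$ is supported on $\{-N,N\}$. Its $n=N$ component is $V_y(u^*_{N-1},u^*_N)+V^{\bc,(0)}_x(u^*_N)$; subtracting the infinite-lattice equilibrium identity $V_y(u^*_{N-1},u^*_N)+V_x(u^*_N,u^*_{N+1})=0$ reduces it to $V^{\bc,(0)}_x(u^*_N)-V_x(u^*_N,u^*_{N+1})$. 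The facts $V_x(0,0)=V_y(0,0)=0$ (from $V(0,0)=0$ together with \eqref{eq:orig_ass:V_growth}) and $V^{\bc,(0)}_x(0)=0$ (by \eqref{eq:orig_ass:Vbc_force}) allow Taylor expansion to yield $|(r_N)_{\pm N}|\leq C(|u^*_{\pm N}|+|u^*_{\pm(N+1)}|)$, and $u^*\in\ell^2(\bbZ)$ then forces $\|r_N\|_{\ell^2}\to 0$ as $N\to\infty$.

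Next I would establish uniform stability: $H_N := \delta^2 E_{0,N}(u^*|_{\calB_N})$ satisfies $\langle H_N v,v\rangle\geq c\|v\|_{\ell^2}^2$ for all $v\in\calU_N$, with $c>0$ independent of $N$. Writing $H_N$ as a sum of bond Hessians $\delta^2 V(u^*_{\xi-\half},u^*_{\xi+\half})$ together with defect and boundary contributions, the defect-core block inherits positivity from \eqref{eq:defect-global}, whose Taylor expansion yields $\langle \delta^2 E_\infty(u^*)w,w\rangle\geq\gamma_E\|w\|_{\ell^2}^2$ on $\ell^2(\bbZ)$; the far-field blocks are small perturbations of the reference Hessian $\delta^2 V(0,0)=\left(\begin{smallmatrix}1&\alpha\\ \alpha&1\end{smallmatrix}\right)$, whose eigenvalues $1\pm\alpha$ are positive since $|\alpha|<1$; and the boundary contributions are either absent or non-negative by \eqref{eq:orig_ass:Vbc_growth1}. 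Combining these yields uniform coercivity of $H_N$. This step is the main obstacle, because transferring the global energy-level bound \eqref{eq:defect-global} on $\ell^2$ into a quantitative, $N$-uniform positive-definiteness of $H_N$ requires simultaneously controlling the truncation of bonds outside $\calB_N$, the (possibly vanishing) boundary potential $V^{\bc,(0)}$, and the fact that $u^*|_{\calB_N}$ is not the zero configuration; keeping the coercivity constant uniform near the boundary is where the bookkeeping is most delicate.

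Finally, with the consistency $\|r_N\|_{\ell^2}\to 0$, the uniform invertibility of $H_N$, and the Lipschitz continuity of $\delta^2 E_{0,N}$ on a fixed-radius ball around $u^*|_{\calB_N}$ (uniform in $N$ by \eqref{eq:orig_ass:P_bound_loc}--\eqref{eq:orig_ass:Vbc_bound}) in hand, I would invoke a quantitative Newton--Kantorovich or inverse function theorem argument to conclude that $\Phi_N(u)=0$ admits a unique zero $u^*_N$ in the ball of radius $\rho$ around $u^*|_{\calB_N}$ for all sufficiently large $N$, together with the error bound $\|u^*_N-u^*|_{\calB_N}\|_{\ell^2}\leq C\|r_N\|_{\ell^2}\to 0$, which delivers the stated convergence.
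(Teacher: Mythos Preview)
Your overall strategy---consistency, stability, then a quantitative inverse function theorem---matches the paper exactly, and your consistency computation is essentially identical to the paper's. The gap is in the stability step. Your sketch proposes to split $H_N$ into a ``core block'' controlled by the $\ell^2(\bbZ)$ coercivity $\langle\ddel E_\infty(u^*)w,w\rangle\geq\gamma_E\|w\|^2$ and ``far-field blocks'' controlled by the positive eigenvalues $1\pm\alpha$ of $\ddel V(0,0)$, and then asserts ``combining these yields uniform coercivity.'' But these two pieces overlap at the interface, and at the physical boundary the comparison with $\ddel E_\infty(u^*)$ has the wrong sign: for free boundary conditions, extending $v\in\calU_N$ by zero and subtracting gives $\langle H_N v,v\rangle = \langle\ddel E_\infty(u^*)v,v\rangle - V_{xx}(u^*_{,N},u^*_{,N+1})\,v_N^2 - V_{yy}(u^*_{,-N-1},u^*_{,-N})\,v_{-N}^2 + (V^{\bc,(0)})_{xx}\text{-terms}$, and when $V^{\bc,(0)}\equiv 0$ the correction is $\approx -v_N^2-v_{-N}^2$, which $\gamma_E\|v\|^2$ alone cannot absorb when $\gamma_E<1$. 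So one genuinely has to use the far-field $\gamma_V$-coercivity near $\pm N$ and the $\gamma_E$-coercivity near the defect simultaneously, and your sketch does not explain how to glue them without loss.

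The paper handles this with a concentration--compactness argument rather than a direct block decomposition: assume $\langle\ddel E_{0,N_i}(u^*|_{\calB_{N_i}})v_i,v_i\rangle < (\min\{\gamma_E,\gamma_V\}-\delta)\|v_i\|^2$ along a sequence, normalize $\|v_i\|=1$, extract a weak limit $v_i\weakto v_\infty$ in $\ell^2(\bbZ)$, and choose radii $r_i\to\infty$ so that the truncation $\bar v_i = v_i\mathbf 1_{|n|\leq r_i}$ converges strongly to $v_\infty$ while $\tilde v_i = v_i - \bar v_i\weakto 0$. Then $\langle\ddel E_\infty(u^*)\bar v_i,\bar v_i\rangle\geq\gamma_E\|\bar v_i\|^2$ handles the near-defect part, the bond-by-bond estimate $\ddel V(u^*_{\xi-\half},u^*_{\xi+\half})\geq(\gamma_V+o(1))I$ for $|\xi|$ large handles the far part, and the cross terms $(\bar v_i,\tilde v_i)$ and $\langle\ddel E_{0,N_i}\bar v_i,\tilde v_i\rangle$ vanish in the limit because $\tilde v_i\weakto 0$. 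This cleanly produces $\langle H_N v,v\rangle\geq(\min\{\gamma_E,\gamma_V\}+o(1))\|v\|^2$ without ever having to cut the quadratic form at a fixed interface. You should either adopt this argument or, if you want a direct proof, make the gluing quantitative (e.g.\ via a partition-of-unity on the index set with explicit control of the commutator terms), which is more bookkeeping than your sketch suggests.
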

\begin{proof}
The standard consistency-stability argument will be applied.

\noindent {\it Uniform regularity of $E_{0,N}$.}
The uniform $\C^3$ regularity of $E_{0,N}(u)$ for $u\in B_{r_V}$ follows from, by now, the standard arguments (see, e.g., \cite{LuskinOrtner2013acta,LiOrtnerShapeevEtAl2014,EhrlacherOrtnerShapeev2013preprint,OrtnerTheil2013}).
Indeed, $\calP(u)$ and $V^\bc_0$ are $\C^3$ functions of $u\in\ell^2(\calL_N)$ and do not depend on $N$, and the rest of the energy can be estimated using the Taylor series around $0$:
\[
	T(u) :=
	\sum_{\xi\in\calL_N} V(u_{\xi-\half}, u_{\xi+\half})
	=
	\sum_{\xi\in\calL_N} \<\ddel V(\eta_\xi u_{\xi-\half}, \eta_\xi u_{\xi+\half})(u_{\xi-\half}, u_{\xi+\half}),(u_{\xi-\half}, u_{\xi+\half})\>.
\]
where we used \eqref{eq:orig_ass:Vzero} and $\del V(0,0)=0$ (the latter follows from \eqref{eq:orig_ass:V_growth}) and where $\eta_\xi$ are some numbers between $0$ and $1$.
By choosing $\rho<r_V-\|u^*\|_{\ell^\infty}$, we can estimate $\ddel V(\eta_\xi u_{\xi-\half}, \eta_\xi u_{\xi+\half}) \leq c_V$ from \eqref{eq:orig_ass:V_bound} which immediately implies $|T(u)| \leq 2 c_V \|u\|_{\ell^2}^2$.
Similar arguments can be used to bound $|\del^k T(u)|$ by $\|u\|_{\ell^2}$ for $k=1,2,3$.

\noindent {\it Consistency.}
The consistency error can be easily estimated as follows:
\begin{align*}
\<\del E_{0,N}(u^*|_{\calB_n}), v\>
=~&
	v_N \big(
		V^\bc_x(u^*_{\infty,N}) -
		V_x(u^*_{\infty,N}, u^*_{\infty,N+1})
	\big)
\\~&+
	v_{-N} \big(
		V^\bc_x(u^*_{\infty,-N}) -
		V_y(u^*_{\infty,-N-1}, u^*_{\infty,-N})
	\big)
\\ \leq~& \|v\|_{\ell^2} C (|u^*_{\infty,N}| + |u^*_{\infty,N+1}|)
\to 0
\qquad\text{as $N\to\infty$}
,
\end{align*}
where we used \eqref{eq:orig_ass:Vbc_force} in the transition to the last line.

\medskip \noindent {\it Stability.}
The stability,
\begin{equation}\label{eq:defect_stab}
\<\ddel E_{0,N}(u^*|_{\calB_N}) v, v\>
\geq
(\min\{\gamma_E,\gamma_V\}+o(1)) \|v\|_{\ell^2}^2
\qquad \forall v\in\ell^2(\bbZ)
\end{equation}
is less straightforward, yet follows from standard concentration compactness arguments.
Indeed, assume that \eqref{eq:defect_stab} is false: that there exist $N_i \to \infty$, $v_i\in\calU_N$, $\|v_i\|=1$, such that
\[
\lim_{i\to\infty}\<\ddel E_{0,N_i}(u^*|_{\calB_{N_i}}) v_i, v_i\> < \min\{\gamma_E,\gamma_V\}.
\]
We then can extract subsequences, which we still denote as $N_i$ and $v_i$, such that $v_i\weakto v_\infty\in\ell^2(\bbZ)$, where we implicitly extend $v_i$ by zero outside $\calB_{N_i}$.
Since a weak convergence implies a pointwise convergence, we have that
$v_{i,j}\to v_{\infty,j}$ for any fixed $j$.
This allows one to choose a sequence $r_i\to\infty$, $r_i < i$, such that if we define
\[
\bar{v}_{i,j} := \begin{cases}
v_{i,j} & |j| \leq r_i \\
0 & |j| > r_i, \\
\end{cases}
\]
and $\tilde{v}_i = v_i - \bar{v}_i$ then $\bar{v}_i \to v_\infty$ strongly in $\ell^2$ and $\tilde{v}_i \weakto 0$ \cite{EhrlacherOrtnerShapeev2013preprint}.

We then will use
\begin{itemize}
\item[(i)]
$
\<\ddel E_{\infty}(u^*|_{\calB_{N_i}}) \bar{v}_i, \bar{v}_i\>
\geq
\gamma_E \|\bar{v}\|_{\ell^2}^2
$
for all $v\in\ell^2(\bbZ)$
which follows from \eqref{eq:defect-global},

\item[(ii)]
$
\<\ddel
	V(u^*_{\xi-\half}, u^*_{\xi+\half})
	(\tilde{v}_{i,\xi-\half}, \tilde{v}_{i,\xi+\half})
	,
	(\tilde{v}_{i,\xi-\half}, \tilde{v}_{i,\xi+\half})
\>
\geq
(\gamma_V + o(1)) (\tilde{v}_{\xi-\half}^2+\tilde{v}_{\xi+\half}^2)
$
as $\xi\to\pm\infty$ which follow from \eqref{eq:orig_ass:V_growth} and the fact that $u^*_{\xi\pm\half}\to 0$ as $\xi\to\pm\infty$,
and lastly
\item[(iii)]
that the scalar product $(\bar{v}_i, \tilde{v}_i)_{\ell^2}\to (v_\infty,0)_{\ell^2} =0$ and likewise $\<\ddel E_{0,N_i}(u^*|_{\calB_{N_i}}) \bar{v}_i, \tilde{v}_i\> \to 0$
\end{itemize}
to estimate
\begin{align*}
\<\ddel E_{0,N_i}(u^*|_{\calB_{N_i}}) v_i, v_i\>
=~&
\<\ddel E_{0,N_i}(u^*|_{\calB_{N_i}}) \bar{v}_i, \bar{v}_i\>
+
\<\ddel E_{0,N_i}(u^*|_{\calB_{N_i}}) \tilde{v}_i, \tilde{v}_i\>
+ o(1)
\\~&
=
\<\ddel E_{\infty}(u^*) \bar{v}_i, \bar{v}_i\>
+
\<\ddel E_{0,N_i}(u^*|_{\calB_{N_i}}) \tilde{v}_i, \tilde{v}_i\>
+ o(1)
\\~&
\geq
\<\ddel E_{\infty}(u^*|_{\calB_{N_i}}) \bar{v}_i, \bar{v}_i\>
+
\sum_{|\xi|>r_i} \smfrac{\gamma_V}{2} (\tilde{v}_{\xi-\half}^2+\tilde{v}_{\xi+\half}^2)
+ o(1)
\\~&
\geq
\gamma_E \|\bar{v}_i\|^2
+
\gamma_V \|\tilde{v}_i\|^2
+ o(1)
\\~&
\geq
\min\{\gamma_E, \gamma_V\} (\|v_i\|^2 - 2 (\bar{v}_i, \tilde{v}_i))
+ o(1)
\\~&
=
\min\{\gamma_E, \gamma_V\}
+ o(1),
\end{align*}
which is a contradiction.

\medskip \noindent {\it Finalizing.}
This allows us to apply the implicit function theorem (IFT) (see, e.g., \cite{EhrlacherOrtnerShapeev2013preprint, LiOrtnerShapeevEtAl2014} for examples of such application of the IFT to quantifying the accuracy of defects calculation) and the assertion of this theorem follows.
\end{proof}
A trivial, but important corollary is the uniform decay at infinity of $|u^*_{N,n}|$:
\begin{corollary}
$\sup_{N\in\bbN, N\geq n}|u^*_{N,n}|\to 0$ as $n\to\infty$.
\end{corollary}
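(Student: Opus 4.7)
The plan is to combine the two ingredients already at our disposal: the tail decay of $u^*_\infty \in \ell^2(\bbZ)$ and the $\ell^2$ convergence $\|u^*_N - u^*|_{\calB_N}\|_{\ell^2} \to 0$ supplied by Lemma \ref{lem:zero_temp_approx}. The key observation is that an $\ell^2$ bound dominates the supremum norm componentwise, so the lemma automatically controls each individual entry uniformly.

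First I would fix $\eps > 0$. Since $u^*_\infty \in \ell^2(\bbZ)$, in particular $u^*_{\infty,n} \to 0$ as $|n|\to \infty$, so there exists $n_1$ such that $|u^*_{\infty,n}| < \eps/2$ for all $|n|\geq n_1$. Next, by Lemma \ref{lem:zero_temp_approx}, there exists $N_0$ such that $\|u^*_N - u^*|_{\calB_N}\|_{\ell^2} < \eps/2$ for all $N\geq N_0$; in particular $|u^*_{N,j} - u^*_{\infty,j}| < \eps/2$ for every $j \in \calB_N$ and every $N\geq N_0$.

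Now set $n_0 := \max(n_1, N_0)$. For any $n\geq n_0$ and any $N\geq n$, we have $N\geq N_0$ and $|n|\geq n_1$, so the triangle inequality yields
\[
|u^*_{N,n}| \leq |u^*_{N,n} - u^*_{\infty,n}| + |u^*_{\infty,n}| < \tfrac{\eps}{2}+\tfrac{\eps}{2} = \eps.
\]
Taking the supremum over $N\geq n$ gives $\sup_{N\geq n} |u^*_{N,n}| \leq \eps$ for all $n\geq n_0$, which is the claim.

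There is essentially no obstacle here; the only subtlety worth verifying is that the range $N\in\{n,\ldots,N_0-1\}$ is empty once $n\geq n_0\geq N_0$, so no separate argument is needed for "small" $N$. The proof uses only that the $\ell^2$ norm controls pointwise values; no further quantitative information about the decay rate of either $u^*_\infty$ or $u^*_N - u^*|_{\calB_N}$ is required.
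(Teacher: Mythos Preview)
Your proof is correct and is precisely the argument the paper has in mind; the paper itself does not spell out a proof, labeling the result a ``trivial, but important corollary'' of Lemma~\ref{lem:zero_temp_approx}. The triangle-inequality splitting via $u^*_\infty$ and the $\ell^2$-control of $u^*_N - u^*|_{\calB_N}$ is exactly the intended (and essentially only) route.
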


\subsection{Change of Variables}\label{sec:finite_lattice:change}

For the purpose of our finite-temperature analysis, we first make a change of variables, which essentially (i) shifts the energy minimizer, $u_N^*$, to the origin, (ii) fixes the additive constant in the energy to be zero at $u_N^*$, (iii) subtracts the dead-load forces to yield $\nabla\tilde{V}_{N,\xi}(0,0)=0$:
\begin{align*}
	\tilde \calP_{N}(v) :=~&
	\calP(u^*_{N}+v)
	- \calP(u^*_{N})
	- \<\del\calP(u^*_{N}), v\>
\\
	\tilde V_{N,\xi}(v_{\xi-\half},v_{\xi+\half}) :=~& V(u^*_{N,\xi-\half}+v_{\xi-\half},u^*_{N,\xi+\half}+v_{\xi+\half})
	\\~&
	-\delta_{N-|\xi|-\half} \smfrac{\gamma_V}{2} \smfrac{v_{N,\xi+\half\sgn(\xi)}^2}{2}
	-V(u^*_{N,\xi-\half},u^*_{N,\xi+\half})
	\\~&
	-v_{\xi-\half} V_x(u^*_{N,\xi-\half},u^*_{N,\xi+\half})
	-v_{\xi+\half} V_y(u^*_{N,\xi-\half},u^*_{N,\xi+\half})
\\
	\tilde V_{\theta,N}^\bc(x)
	:=~&
	\smfrac{\gamma_V}{2} \smfrac{x^2}{2}
	+
	V^\bc_\theta(u^*_{N,N}+x)
	-
	V^\bc_{\theta}(u^*_{N,N})
	-
	x (V^{\bc,{(0)}})_{x}(u^*_{N,N})
	,
\\
	\tilde E_{\theta,N}(v)
:=~&
	\tilde \calP_{N}(v) +
	\sum_{\xi\in\calL_N} \tilde V_{N,\xi}(v_{\xi-\half},v_{\xi+\half})
	+ \tilde V_{0,N}^{\bc}(v_N)
	+ \tilde V_{0,N}^{\bc}(v_{-N}),
\end{align*}
where $\delta_\bullet$ is the Kronecker delta.
Notice that we also added $\smfrac{\gamma_V}{2} \smfrac{x^2}{2}$ to $\tilde{V}^\bc$ which was then subtracted from $\tilde{V}_{N,N-\half}$ and $\tilde{V}_{N,-N+\half}$.
This change of variables essentially does not change the original energy, as we can see from the following proposition.
\begin{proposition}
$
\tilde E_{\theta,N}(v) = E_{\theta,N}(u_N^* + v) - E_{\theta,N}(u_N^*).
$
\end{proposition}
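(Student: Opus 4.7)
The identity is established by direct substitution of the definitions of $\tilde\calP_N$, $\tilde V_{N,\xi}$, and $\tilde V^\bc$ into $\tilde E_{\theta,N}(v)$, followed by bookkeeping cancellations. First I would write out $E_{\theta,N}(u^*_N + v) - E_{\theta,N}(u^*_N)$ as a sum of three pieces: a $\calP$-piece $\calP(u^*_N+v)-\calP(u^*_N)$, a bond-interaction piece $\sum_{\xi\in\calL_N}\big[V(u^*_{N,\xi-\half}+v_{\xi-\half},u^*_{N,\xi+\half}+v_{\xi+\half})-V(u^*_{N,\xi-\half},u^*_{N,\xi+\half})\big]$, and the two boundary pieces at $\pm N$. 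Comparing term-by-term with $\tilde\calP_N(v)+\sum_\xi \tilde V_{N,\xi}(v_{\xi-\half},v_{\xi+\half})+\tilde V^\bc(v_N)+\tilde V^\bc(v_{-N})$ shows that the two sides differ only by the collection of linear-in-$v$ terms that were subtracted in the definition of each $\tilde{}$-quantity, plus the auxiliary $\tfrac{\gamma_V}{2}\tfrac{x^2}{2}$ terms that were added to $\tilde V^\bc$ and simultaneously subtracted from $\tilde V_{N,\pm(N-\half)}$.

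The auxiliary quadratic bookkeeping terms cancel immediately: the $\tfrac{\gamma_V}{2}\tfrac{v_{\pm N}^2}{2}$ added into the boundary potential $\tilde V^\bc$ is exactly what the Kronecker-$\delta$ factor removes from $\tilde V_{N,\xi}$ at the two boundary bonds $\xi=\pm(N-\half)$, so these pieces contribute nothing net to the identity.

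The remaining discrepancy is the sum of all the subtracted linear terms, which take the form
\[
\<\del\calP(u^*_N),v\>+\sum_{\xi\in\calL_N}\big[v_{\xi-\half}V_x(u^*_{N,\xi-\half},u^*_{N,\xi+\half})+v_{\xi+\half}V_y(u^*_{N,\xi-\half},u^*_{N,\xi+\half})\big]+v_N(V^{\bc,(0)})_x(u^*_{N,N})+v_{-N}(V^{\bc,(0)})_x(u^*_{N,-N}).
\]
This is precisely $\<\del E_{0,N}(u^*_N),v\>$, regrouped by bond. By the equilibrium condition \eqref{eq:defect-N-def} from Lemma \ref{lem:zero_temp_approx} defining $u^*_N$, this expression vanishes for every $v\in\calU_N$. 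Thus the two sides agree.

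The proof is essentially a tautology once the change of variables has been written down, and there is no real obstacle: the only nontrivial input is the first-order optimality \eqref{eq:defect-N-def}, which was precisely the motivation for choosing the linear correction terms in the first place. The main thing to keep track of is the boundary bookkeeping at $\xi=\pm(N-\half)$, where the $\gamma_V$-term is shuffled between interior and boundary contributions.
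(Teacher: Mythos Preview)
Your proof is correct and follows essentially the same approach as the paper's: both compute the difference $E_{\theta,N}(u_N^*+v)-E_{\theta,N}(u_N^*)-\tilde E_{\theta,N}(v)$, observe that the quadratic $\gamma_V$ bookkeeping terms cancel between the boundary and the two extremal interior bonds, and identify the remaining linear piece as $\<\del E_{0,N}(u_N^*),v\>$, which vanishes by \eqref{eq:defect-N-def}. Your write-up is simply more explicit about the term-by-term matching than the paper's terse three-line computation.
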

\begin{proof}
We can easily write the difference as
\begin{align*}
&
	E_{\theta,N}(u_N^* + v) - E_{\theta,N}(u_N^*) - \tilde E_{\theta,N}(v)
\\=~&
	\<\del \calP(u^*_N), v\> +
	\sum_{\xi\in\calL_N} \big(
		v_{\xi-\half} V_x(u^*_{N,\xi-\half},u^*_{N,\xi+\half})
		+
		v_{\xi+\half} V_y(u^*_{N,\xi-\half},u^*_{N,\xi+\half})
	\big)
\\~&
	+ v_{N} (V^{\bc,(0)}_{N})_x(u^*_{N,N})
	+ v_{-N} (V^{\bc,(0)}_{N})_x(u^*_{N,-N})
\\=~& \<\del E_{0,N}(u_N^*), v\> = 0.
\end{align*}
\end{proof}

Recall that we assumed $u^*_{-j}=u^*_j$, which implies $\tilde{V}_{N,\xi} = \tilde{V}_{N,-\xi}$.

We next list and prove the properties of the newly introduced functions that will effectively be used as assumptions for this section. (In other words, we will establish uniform in $N$ and $\theta$ expansion of the Gibbs measures based on the assumptions below on $\tilde{V}_{N,\xi}$, etc., regardless of the origin of such functions.)

\begin{lemma}\label{lem:ass}
For any $\eps>0$ there exist $K_0, N_0\in\bbN$ such that for $N \geq N_0$ with $\tilde{\gamma}_V=\gamma_V/2-\eps$, $\tilde{\gamma}_\bc=\gamma_V/2$, $\tilde{\gamma}_E=\min\{\gamma_E,\gamma_V\}-\eps$, and some positive $\tilde{c}_\rho$, $\tilde{c}_\calP$, $\tilde{c}_V$, and $\tilde{r}_V$ the following properties hold for all $\xi\in\calL_N$, $\theta>0$, and $m=0,1,\ldots,M-1$:
\begin{align} \notag
&
\tilde{\calP}_N, \tilde{V}_{N,\xi}^{(m)}, \tilde{V}_{N}^{\bc,(m)}, \tilde{V}_{\theta,N}^{\bc,(M^+)} \text{ are measurable with values in $\overline{\bbR}$}
\\ \label{eq:ass:P_bound}
& \tilde{\calP}_N(v)
\geq
-\tilde{c}_\calP (1+\|v\|),
\\ \label{eq:ass:P_bound_loc}
&
\|\tilde{\calP}_{N}\|_{\C^{2M+3}(B_{\tilde{r}_V})} \leq \tilde{c}_V,
\quad
\\ \label{eq:ass:V_bound}
&
\|\tilde{V}_{N,\xi}\|_{\C^{2M-2 m+3}(B_{\tilde{r}_V})} \leq \tilde{c}_V,
\\ \label{eq:ass:Vbc_bound}
&
\|\tilde{V}_{N}^{\bc,(m)}\|_{\C^{2M-2 m+3}(B_{\tilde{r}_V})} \leq \tilde{c}_V,
\quad
\|\tilde{V}_{\theta,N}^{\bc,(M^+)}\|_{C(B_{\tilde{r}_V})} \leq \tilde{c}_V
,
\\ \label{eq:ass:V_zero_order}
&
\tilde{V}_{N,\xi}^{(0)}(0,0) = 0
,
\\ \notag
&
\tilde{V}_{N}^{\bc,(0)}(0) = 0
.
\end{align}

Moreover, the following properties hold for all $N \geq N_0$, $|\xi| \geq K_0$, $\theta>0$, and $(x,y)\in\bbR^2$:
\begin{align}
\label{eq:ass:V_growth}
&
\tilde{V}_{N,\xi}(x,y) \geq \smfrac{\tilde{\gamma}_V}{2} (x^2+y^2),
\\ \label{eq:ass:Vbc_growth}
&
\tilde{V}_{N}^{\bc,(0)}(x) \geq \smfrac{\tilde{\gamma}_\bc}{2} x^2,
\quad
\tilde{V}_{N}^{\bc,(m)}(x) \geq -c_V,
\quad
\tilde{V}_{\theta,N}^{\bc,(M^+)}(x) \geq -c_V,
\\ \label{eq:ass:E_convex}
&
\tilde{E}_{0,N}(v) \geq \smfrac{\tilde{\gamma}_E}{2} \|v\|^2
.
\end{align}

Finally, the following holds:
\begin{align}
\label{eq:ass:V_asymptote}
&
\left\{
\begin{array}{l}
	(\tilde{V}_{N,\xi})_{xx}(0) \to 1 \\
	(\tilde{V}_{N,\xi})_{xy}(0) \to \alpha \\
	(\tilde{V}_{N,\xi})_{yy}(0) \to 1-\delta_{N-\xi-\half}\,\frac{\gamma_V}{2}
\end{array}
\right.
\text{~~~as $\xi,N\to\infty$}
\end{align}
(cf.\ \eqref{eq:alpha_def} for the definition of $\alpha$).
We note that, here and below, by $\xi,N\to\infty$ we mean that $\xi\to+\infty$ and $N\to+\infty$ such that $0\leq\xi<N$ but otherwise independently of each other, unless additional constraints are given. (The case $\xi\to-\infty$ will be taken care of by the symmetricity $\tilde{V}_{N,\xi} = \tilde{V}_{N,-\xi}$.)
\end{lemma}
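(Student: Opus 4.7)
The plan is to prove each group of claims by composing the original assumptions on $\calP$, $V$, and $V^\bc_\theta$ with the shift by $u^*_N$, using Lemma~\ref{lem:zero_temp_approx} to bound $\|u^*_N\|_{\ell^\infty}$ uniformly and its Corollary for the tail decay $\sup_{N\ge|\xi|+\half}|u^*_{N,\xi\pm\half}|\to 0$ as $|\xi|\to\infty$.

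First, I would dispense with the routine items. The measurability, the $C^{2M+3}$ bounds \eqref{eq:ass:P_bound_loc}--\eqref{eq:ass:Vbc_bound}, the zero-order identities $\tilde V_{N,\xi}(0,0)=\tilde V^\bc_N(0)=0$, and the linear growth bound \eqref{eq:ass:P_bound} all follow by choosing $\tilde r_V<r_V-\sup_N\|u^*_N\|_{\ell^\infty}$ so that $u^*_N+B_{\tilde r_V}\subset B_{r_V}$, applying \eqref{eq:orig_ass:V_bound}--\eqref{eq:orig_ass:Vbc_bound} to the shifted argument, and noting that the subtracted constant, linear, and quadratic corrections have uniformly bounded derivatives on this ball. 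The asymptotic \eqref{eq:ass:V_asymptote} is then the continuity of $V_{xx},V_{xy},V_{yy}$ at $(0,0)$ combined with the Corollary, with the $-\gamma_V/2$ correction on $V_{yy}$ at boundary bonds coming directly from the explicit quadratic term in the definition of $\tilde V_{N,\xi}$.

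Next, for the quadratic growth \eqref{eq:ass:V_growth} and the boundary growth \eqref{eq:ass:Vbc_growth}, I would start from $V(x,y)\ge\smfrac{\gamma_V}{2}(x^2+y^2)$, complete the square, and absorb the subtracted constant $V(u^*)=O(|u^*|^2)$ and the subtracted linear $-v\,V_{x,y}(u^*)$ (whose coefficient is $O(|u^*|)$ since $\nabla V(0,0)=0$) into the leading quadratic via Young's inequality. Choosing $K_0$ so large that $|u^*_{N,\xi\pm\half}|$ is small enough for $|\xi|\ge K_0$ yields $\tilde\gamma_V=\gamma_V/2-\eps$. At boundary bonds the subtracted $-\smfrac{\gamma_V}{4}v^2$ eats into one variable only, but the remaining coefficient $\gamma_V/4-O(\eps)$ still exceeds $\tilde\gamma_V/2$; the matching addition in $\tilde V^\bc_{\theta,N}$ simultaneously furnishes $\tilde V^{\bc,(0)}_N(x)\ge\smfrac{\gamma_V}{4}x^2$ in the free-BC case, giving the first part of \eqref{eq:ass:Vbc_growth}, while the remaining bounds on $\tilde V^{\bc,(m)}$ and $\tilde V^{\bc,(M^+)}_{\theta,N}$ transfer from \eqref{eq:orig_ass:Vbc_growth2} after the bounded shift.

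The main obstacle is the global convexity \eqref{eq:ass:E_convex}, which is genuinely a statement about the full energy and cannot be read off bond-by-bond. My plan is a concentration-compactness argument along the lines of the stability step of Lemma~\ref{lem:zero_temp_approx}. Supposing \eqref{eq:ass:E_convex} fails, I extract a normalized sequence $v_k\in\calU_{N_k}$ (extended by zero to $\ell^2(\bbZ)$) with $\tilde E_{0,N_k}(v_k)<\smfrac12(\min\{\gamma_E,\gamma_V\}-\eps)\|v_k\|^2$, split $v_k=\bar v_k+\tilde v_k$ with $\bar v_k$ compactly supported on some $\calB_{r_k}$ ($r_k\to\infty$, $r_k<N_k$) and $\bar v_k\to \bar v_\infty$ strongly in $\ell^2$ while $\tilde v_k\weakto 0$, and combine three ingredients: (i) assumption \eqref{eq:defect-global}, applied to $\bar v_\infty$ after transferring $\tilde E_{0,N_k}(\bar v_k)$ to $E_\infty(u^*_\infty+\bar v_k)-E_\infty(u^*_\infty)$ using the convergence $\|u^*_{N_k}-u^*_\infty|_{\calB_{N_k}}\|_{\ell^2}\to 0$; (ii) the pointwise quadratic growth \eqref{eq:ass:V_growth} and boundary-term growth \eqref{eq:ass:Vbc_growth} established above, applied to $\tilde v_k$; and (iii) asymptotic orthogonality of the cross terms, following from $\bar v_k\to\bar v_\infty$ strongly and $\tilde v_k\weakto 0$. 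Together these force $\liminf_k \tilde E_{0,N_k}(v_k)\ge\smfrac12\min\{\gamma_E,\gamma_V\}\|v_k\|^2$, a contradiction. The hardest piece of bookkeeping will be (iii): showing that the near-boundary bond cross terms and the boundary-BC contribution $\tilde V^\bc_{0,N_k}(v_{k,\pm N_k})$ decouple in the limit, an issue that does not appear in the Hessian-level stability proof of Lemma~\ref{lem:zero_temp_approx} but is forced on us because we are working with the full energy rather than its quadratic approximation.
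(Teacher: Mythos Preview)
Your plan matches the paper's approach closely. The routine items and the asymptotic \eqref{eq:ass:V_asymptote} are handled the same way; for \eqref{eq:ass:V_growth}--\eqref{eq:ass:Vbc_growth} the paper packages your completing-the-square / Young argument into an abstract lemma (Lemma~\ref{lem:quad_growth}) applied to $V$ and to $V^{\bc,(0)}$ separately, but the content is identical. For \eqref{eq:ass:E_convex} the paper also runs concentration compactness (isolated as Lemma~\ref{lem:Enk_convex}, applied with $K=N$), and your ingredients (i)--(iii) are precisely the ones used there.

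There is, however, a genuine gap in your plan for \eqref{eq:ass:E_convex}. You write that you ``extract a normalized sequence $v_k$'', but $\tilde E_{0,N}$ is not $2$-homogeneous, so you cannot rescale a violating sequence to unit norm and preserve the inequality $\tilde E_{0,N_k}(v_k)<\smfrac{\tilde\gamma_E}{2}\|v_k\|^2$. (This is exactly where the analogy with the Hessian-level stability in Lemma~\ref{lem:zero_temp_approx} breaks down: there the form \emph{is} quadratic and normalization is legitimate.) Without normalization, the concentration-compactness step requires $\sup_k\|v_k\|<\infty$ in order to extract a weak $\ell^2$ limit, and $\liminf_k\|v_k\|>0$ for the resulting lower bound to contradict anything. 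The paper therefore passes to a subsequence along which $\lim_k\|v_k\|$ exists in $(0,\infty]$ and splits into three cases. Your argument is exactly the paper's Case~1 ($0<\lim\|v_k\|<\infty$). Case~2 ($\lim\|v_k\|=\infty$) is dispatched directly from the growth bounds \eqref{eq:orig_ass:P_bound}, \eqref{eq:orig_ass:V_growth}, \eqref{eq:orig_ass:Vbc_growth1}--\eqref{eq:orig_ass:Vbc_growth2}, which dominate the ratio $\tilde E_{0,N_k}(v_k)/\|v_k\|^2$ from below by $\gamma_V/2+o(1)$. Case~3 ($\lim\|v_k\|=0$) reduces, via a third-order Taylor expansion of $\tilde E_{0,N_k}$ about $0$, to the Hessian stability \eqref{eq:defect_stab} already established in Lemma~\ref{lem:zero_temp_approx}. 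Both missing cases are short, but both are necessary; as written your plan covers only one of the three.
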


We first note that all the statements except \eqref{eq:ass:V_growth}, \eqref{eq:ass:Vbc_growth}, and \eqref{eq:ass:E_convex}, easily follow from the definition of $\tilde{V}_{N,\xi}$ and $\tilde{V}^\bc_{\theta,N}$ and the convergence of $u^*_N$ to $u^*_\infty$.
Note that we need to choose $\tilde{r}_V \leq r_V - \|u^*\|_{\ell^\infty}$ in order to satisfy \eqref{eq:ass:P_bound_loc}--\eqref{eq:ass:Vbc_bound}.

Furthermore, \eqref{eq:ass:V_growth} and \eqref{eq:ass:Vbc_growth} follow easily from the following Lemma.

\begin{lemma}\label{lem:quad_growth}
Let $X$ be a Banach space, $E:X\to\overline{\bbR}$, be such that $E\in \C^3(B_r)$, $E(0)=0$, and $E(v)\geq \frac{\gamma}{2} \|v\|^2$ for all $v\in X$.
Then for any $\eps>0$ there exists $R>0$ such that for all $\|u\|\leq R$ and $v\in X$
\begin{align}\label{eq:quad_growth}
E(u+v)-E(u)-\<\del E(u), v\>\geq \smfrac{\gamma-\eps}{2} \, \|v\|^2
.
\end{align}
\end{lemma}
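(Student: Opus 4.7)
The plan is to split the proof into two regimes for $\|v\|$: a small-$v$ regime where a Taylor expansion at $u$ delivers the bound, and a large-$v$ regime where the global quadratic lower bound on $E$ dominates the correction terms.

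First I would extract the pointwise information at the origin from the hypotheses. Since $E(0)=0$ and $E(v)\geq \smfrac{\gamma}{2}\|v\|^2 \geq 0$, the point $0$ is a global minimum of $E$, so $\<\del E(0), w\>=0$ for all $w\in X$. Using $E\in \C^3(B_r)$, Taylor expansion around $0$ gives $E(v)=\smfrac12\<\ddel E(0) v,v\> + o(\|v\|^2)$ as $v\to 0$, and comparison with the quadratic lower bound yields $\<\ddel E(0) w, w\>\geq \gamma\|w\|^2$ for every $w\in X$. By the continuity of $\ddel E$ on $B_r$, I can fix $r_1\in(0,r)$ small enough that $\<\ddel E(x) w, w\>\geq (\gamma-\eps)\|w\|^2$ for all $w\in X$ and all $\|x\|\leq r_1$, and so that the operator norm of $\ddel E$ on $\overline{B_{r_1}}$ is bounded by some constant $C_0$.

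Next, fix $\rho\in(0,r_1)$ and plan to choose $R\in(0,r_1-\rho]$. In the \textbf{small-$v$ case} $\|v\|\leq\rho$ and $\|u\|\leq R$ the segment $\{u+tv: t\in[0,1]\}$ stays in $B_{r_1}$, so the second-order Taylor formula gives
\[
E(u+v)-E(u)-\<\del E(u),v\>
=
\int_0^1 (1-t)\<\ddel E(u+tv)v,v\>\,{\rm d}t
\geq
\smfrac{\gamma-\eps}{2}\|v\|^2,
\]
which is the required inequality in this regime. In the \textbf{large-$v$ case} $\|v\|>\rho$ (and still $\|u\|\leq R$) I would combine the global bound $E(u+v)\geq \smfrac{\gamma}{2}\|u+v\|^2\geq \smfrac{\gamma}{2}(\|v\|^2-2\|u\|\|v\|)$ with the Taylor estimates $|E(u)|\leq \smfrac{C_0}{2}\|u\|^2\leq \smfrac{C_0}{2}R^2$ and $|\<\del E(u),v\>|\leq C_0\|u\|\|v\|\leq C_0 R\|v\|$, both coming from $E(0)=0$ and $\del E(0)=0$. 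This yields
\[
E(u+v)-E(u)-\<\del E(u),v\>
\geq
\smfrac{\gamma}{2}\|v\|^2 - (\gamma+C_0) R\|v\| - \smfrac{C_0}{2} R^2.
\]
Using $\|v\|\geq \rho$ to absorb the lower-order terms via $R\|v\|\leq (R/\rho)\|v\|^2$ and $R^2\leq (R/\rho)^2\|v\|^2$, this is at least $\smfrac{\gamma-\eps}{2}\|v\|^2$ once $R$ is taken small enough depending on $\rho$ and $\eps$. (If $E(u+v)=+\infty$ the claim is trivial, so there is no issue with the $\C^3$ hypothesis being restricted to $B_r$.)

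The main obstacle is not any single deep estimate but the careful ordering of the small parameters: $\rho$ must be chosen first, small relative to $r$ and using the modulus of continuity of $\ddel E$ near $0$, so that Case 1 closes; then $R$ must be chosen small relative to $\rho$ (not merely relative to $r$) so that the first-order Taylor errors $R\|v\|$ and $R^2$ in Case 2 are absorbed into $\eps\|v\|^2$. The Banach-space setting poses no real difficulty because only directional information along segments $u+tv$ is used and no orthogonality or inner-product identities are required.
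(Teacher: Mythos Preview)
Your proof is correct and follows essentially the same approach as the paper's: split into a small-$\|v\|$ regime handled by Taylor expansion using the Hessian lower bound near the origin, and a large-$\|v\|$ regime handled by combining the global quadratic lower bound on $E$ with the first-order Taylor estimates $E(u)=O(R^2)$, $\del E(u)=O(R)$, then absorbing the lower-order terms via $\|v\|\geq\rho$. Your version is slightly more explicit (the integral remainder, the constant $C_0$, the observation that $E(u+v)=+\infty$ is harmless), but the structure and the key idea---choosing $\rho$ first for the Hessian bound and then $R$ small relative to $\rho$---are identical.
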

\begin{proof}

\noindent{\it Proof of \eqref{eq:quad_growth}.}
Fix $\eps>0$.
First, note that $\ddel E(w) \geq \gamma-\eps$ in the sense of self-adjoint operators, when $\|w\|\leq 2\rho$ for some $\rho$, $0<2\rho\leq r$.
The Taylor expansion, $E(u+v)-E(u)-\<\del E(u), v\> = \frac12 \<\ddel E(u+v\eta_{u,v}) v, v\>$, where $\eta_{u,v}\in(0,1)$, hence proves \eqref{eq:quad_growth} for $\|v\|\leq \rho$, once we choose $R\leq\rho$.

Then, notice that $\del E(u) = O(\|u\|) \leq O(R)$ and $E(u) = O(\|u\|^2) \leq O(R^2)$. Hence for $\|v\|>\rho$, we estimate in the limit $R\to 0$:
\begin{align*}
E(u+v)-E(u)-\<\del E(u), v\>
\geq~&
	\smfrac{\gamma}{2} \|u+v\|^2
	- E(u) - \|\del E(u)\| \|v\|
\\=~&
	\smfrac{\gamma}{2} \|v\|^2
	- O(\|u\|^2) - O(\|u\|) \|v\|
\\ \geq ~&
	\smfrac{\gamma}{2} \|v\|^2
	- O(R^2) \|v\|^2/\rho^2 - O(R) \|v\|^2/\rho
\\ =~&
	\smfrac{\gamma}{2} (1+O(R/\rho))\,\|v\|^2
.
\end{align*}
Hence \eqref{eq:quad_growth} follows once $R$ is chosen small enough.
\end{proof}

Finally, \eqref{eq:ass:E_convex} follows from the following, more general lemma, applied with $K=N$.

\begin{lemma}\label{lem:Enk_convex}
Let $\tilde{E}_{\theta,N,K} : \calU_K \to \bbR$, be defined by
\[
\tilde{E}_{\theta,N,K}
:=
	\tilde \calP_{N}(v) +
	\sum_{\xi\in\calL_K} \tilde V_{N,\xi}(v_{\xi-\half},v_{\xi+\half})
	+ \tilde V_{\theta,N}^{\bc}(v_K)
	+ \tilde V_{\theta,N}^{\bc}(v_{-K})
\]
for $K \leq N$.
Then $\tilde{E}_{0,N,K}(v) \geq (\tilde{\gamma}_E - o(1)) \frac{\|v\|^2}{2}$ as $K\to\infty$.
\end{lemma}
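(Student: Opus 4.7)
The plan is to argue by contradiction using a concentration-compactness scheme analogous to the stability argument in Lemma \ref{lem:zero_temp_approx}, now applied to the non-quadratic energy $\tilde E_{0,N,K}$ itself rather than its Hessian. Suppose the claim fails: there exist $\eps>0$, sequences $K_i\leq N_i$ with $K_i\to\infty$, and $v_i\in\calU_{K_i}\setminus\{0\}$ such that $\tilde E_{0,N_i,K_i}(v_i)<\smfrac{1}{2}(\tilde\gamma_E-\eps)\|v_i\|_{\ell^2}^2$. Rescaling reduces to the unit-norm case $\|v_i\|_{\ell^2}=1$, legitimate up to a separate (easy) argument for $\|v_i\|\to\infty$ based on the quadratic growth \eqref{eq:ass:V_growth}--\eqref{eq:ass:Vbc_growth} away from $\calB_{K_0}$ and the at-most-linear growth \eqref{eq:ass:P_bound} of $\tilde\calP_N$.

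Extending $v_i$ by zero to $\ell^2(\bbZ)$ and passing to a subsequence gives $v_i\weakto v_\infty$. Choosing $r_i\to\infty$ with $r_i\leq K_i-K_0$ and decomposing $v_i=\bar v_i+\tilde v_i$, where $\bar v_i$ is the restriction of $v_i$ to $\calB_{r_i}$, the same diagonal extraction as in Lemma \ref{lem:zero_temp_approx} yields $\bar v_i\to v_\infty$ strongly in $\ell^2$ while $\tilde v_i\weakto 0$. I would then bound the near- and far-field contributions separately. For the near-field, the uniform decay $u^*_{N,n}\to u^*_{\infty,n}$ on bounded sets (the corollary to Lemma \ref{lem:zero_temp_approx}) combined with the $\C^2$-regularity of $V$, $\calP$, and $V^\bc_0$ implies that on the support of $\bar v_i$ the tilded potentials $\tilde V_{N_i,\xi}$ and $\tilde\calP_{N_i}$ converge uniformly to their $N=\infty$ counterparts. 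Hence the near-field contribution tends to $\tilde E_\infty(v_\infty)=E_\infty(u^*_\infty+v_\infty)-E_\infty(u^*_\infty)$, which by \eqref{eq:defect-global} is at least $\smfrac{\gamma_E}{2}\|v_\infty\|^2$. For the far-field, bonds with $|\xi|\geq r_i>K_0$ satisfy the quadratic growth \eqref{eq:ass:V_growth}, producing $\smfrac{\tilde\gamma_V}{2}\|\tilde v_i\|^2$, and \eqref{eq:ass:Vbc_growth} gives $\tilde V^\bc_{0,N_i}(v_{i,\pm K_i})\geq 0$.

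The cross contributions come from the single bond at $|\xi|\approx r_i$ that mixes one entry of $\bar v_i$ with one of $\tilde v_i$, and from the inner-product correction $2(\bar v_i,\tilde v_i)_{\ell^2}$ in $\|v_i\|^2=\|\bar v_i\|^2+\|\tilde v_i\|^2+2(\bar v_i,\tilde v_i)_{\ell^2}$. Both are $o(1)$: the first by the $\C^2$-bound \eqref{eq:ass:V_bound} together with the pointwise vanishing of $v_{i,\pm r_i}$ along a further subsequence (which can be arranged by adjusting $r_i$ using a diagonal argument), and the second by the strong-weak convergence $\bar v_i\to v_\infty$, $\tilde v_i\weakto 0$. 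Assembling everything gives
\[
\tilde E_{0,N_i,K_i}(v_i)\geq\smfrac{\gamma_E}{2}\|v_\infty\|^2+\smfrac{\tilde\gamma_V}{2}\|\tilde v_i\|^2-o(1)\geq\smfrac{1}{2}\min\{\gamma_E,\tilde\gamma_V\}-o(1)=\smfrac{\tilde\gamma_E}{2}-o(1),
\]
contradicting the hypothesis.

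The main obstacle is the near-field step: transferring the global strong convexity \eqref{eq:defect-global}, stated for $E_\infty$ centered at $u^*_\infty$, into a lower bound on $\tilde E_{0,N_i,K_i}$, which is centered at $u^*_{N_i}$ and uses its own boundary potential $\tilde V^\bc_{0,N_i}$ applied at the "wrong" location $\pm K_i$ rather than $\pm N_i$. The boundary mismatch is harmless for the lower bound thanks to \eqref{eq:ass:Vbc_growth}, but one must carefully exploit the convergence $u^*_N\to u^*_\infty$ on bounded sets together with the localization of $\calP$ inside $\calB_{K_0}$ to justify that the near-field part of the energy actually converges to $\tilde E_\infty(v_\infty)$.
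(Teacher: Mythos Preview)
Your overall strategy---contradiction plus concentration compactness---matches the paper's, but there is a genuine gap. The step ``rescaling reduces to the unit-norm case $\|v_i\|=1$'' is not legitimate here: $\tilde E_{0,N,K}$ is not $2$-homogeneous, so from $\tilde E(v_i)<\tfrac12(\tilde\gamma_E-\eps)\|v_i\|^2$ you cannot infer anything about $\tilde E(v_i/\|v_i\|)$. The paper instead extracts a subsequence along which $\lim\|v_i\|$ exists in $[0,\infty]$ and treats three cases. Your concentration-compactness argument is essentially the paper's Case~1 ($0<\lim\|v_i\|<\infty$), and you flag Case~2 ($\lim\|v_i\|=\infty$), but Case~3 ($\lim\|v_i\|=0$) is missing. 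In that regime your displayed lower bound degenerates to $-o(1)$ and gives no contradiction; the paper closes it by Taylor expanding $2\tilde E(v_i)/\|v_i\|^2=\|v_i\|^{-2}\langle\ddel\tilde E(0)v_i,v_i\rangle+O(\|v_i\|)$ and invoking the Hessian stability \eqref{eq:defect_stab} already established in Lemma~\ref{lem:zero_temp_approx}.

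A secondary issue concerns the constant. In the far field you invoke \eqref{eq:ass:V_growth}, which yields only $\tilde\gamma_V=\gamma_V/2-\eps$; your final constant $\min\{\gamma_E,\tilde\gamma_V\}$ is then strictly smaller than the required $\tilde\gamma_E=\min\{\gamma_E,\gamma_V\}-\eps$, so the last equality in your display is false in general. The paper avoids this loss by undoing the change of variables, writing $\tilde E_{0,N,K}(v)=E_{0,K}(u^*_N+v)-E_{0,K}(u^*_N)$ and applying the original growth bound \eqref{eq:orig_ass:V_growth} to $V$ directly, which recovers the full $\gamma_V$. Working with the untilded energy also dissolves the ``main obstacle'' you raise at the end: the near-field term becomes $E_\infty(u^*_N+\bar v_i)-E_\infty(u^*_N)$, which is compared to \eqref{eq:defect-global} via $\|u^*_N-u^*_\infty\|\to 0$ and continuity of $E_\infty$, without tracking convergence of each tilded potential $\tilde V_{N,\xi}$.
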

\begin{proof}
Assume the converse, that there exist $k_i, n_i\in\bbN$, $n_i \geq k_i\to\infty$, and $v_i\in\calU_{n_i}$, $v_i\ne 0$, such that
\[
\liminf_{i\to\infty}\frac{\tilde{E}_{0,n_i,k_i}(v_i)}{\|v_i\|^2}
=
\liminf_{i\to\infty}\frac{E_{0,k_i}(u^*_{n_i} + v_i)- E_{0,k_i}(u^*_{n_i})}{\|v_i\|^2}
 < \smfrac12 \min\{\gamma_E,\gamma_V\}.
\]
We can extract a subsequence of $v_i$ (which we still denote by $v_i$) such that $\lim_{i\to\infty}\|v_i\| \in \bbR^+\cup\{+\infty\}$.

\medskip \noindent {\it Case 1 ($0<\lim_{i\to\infty}\|v_i\|<+\infty$).}
In this case we use the concentration compactness argument.
We further extract a subsequence (again not relabeled) such that $v_i \weakto v_\infty \in \ell^2(\bbZ)$, where we implicitly extend $v_i$ by zero outside $\calB_{n_i}$.
Then, similarly as in the proof of Lemma \ref{lem:zero_temp_approx}, there exists a sequence $r_i\to\infty$, $r_i<k_i$, such that if we define $\bar{v}_i \in \ell^2(\bbZ)$,
\[
\bar{v}_{i,j} := \begin{cases}
v_{i,j} & |j| < r_i \\
0 & |j| \geq r_i, \\
\end{cases}
\]
then $\bar{v}_i \to v_\infty$ strongly in $\ell^2$ and $\tilde{v}_i := v_i - \bar{v}_i \weakto 0$.
Then one can express
\begin{align*}
\tilde{E}_{0,n_i,k_i}(v_i)
=~&
E_{0,k_i}(u^*_{n_i} + v_i) - E_{0,k_i}(u^*_{n_i})
\\ =~&
		\big(\calP(u^*_{n_i} + v_i) - \calP(u^*_{n_i})\big)
	\\ &+
		\sum_{\substack{\xi\in\calL_{k_i} \\ |\xi| < r_i-\half}} \big(
		V(u^*_{n_i, \xi-\half} + \bar{v}_{i,\xi-\half}, u^*_{n_i, \xi+\half} + \bar{v}_{i,\xi+\half})
		- V(u^*_{n_i, \xi-\half}, u^*_{n_i, \xi+\half})
		\big)
	\\ &+
		\big(
		V(u^*_{n_i, r_i-1} + \bar{v}_{i,r_i-1}, u^*_{n_i, r_i} + \tilde{v}_{i,r_i})
		- V(u^*_{n_i, r_i-1}, u^*_{n_i, r_i})
		\big)
	\\ &+
		\big(
		V(u^*_{n_i, -r_i+1} + \bar{v}_{i,-r_i+1}, u^*_{n_i, -r_i} + \tilde{v}_{i,-r_i})
		- V(u^*_{n_i, -r_i+1}, u^*_{n_i, -r_i})
		\big)
	\\ &+
		\sum_{\substack{\xi\in\calL_{k_i} \\ |\xi| > r_i-\half}} \big(
		V(u^*_{n_i, \xi-\half} + \tilde{v}_{i,\xi-\half}, u^*_{n_i, \xi+\half} + \tilde{v}_{i,\xi+\half})
		- V(u^*_{n_i, \xi-\half}, u^*_{n_i, \xi+\half})
		\big)
	\\ &+
		V^\bc_{0,n_i}(u^*_{n_i, k_i} + \tilde{v}_{i,k_i})
		- V^\bc_{0,n_i}(u^*_{n_i, k_i})
		+ V^\bc_{0,n_i}(u^*_{n_i, -k_i} + \tilde{v}_{i,-k_i})
		- V^\bc_{0,n_i}(u^*_{n_i, -k_i})
.
\end{align*}
Note that here we cannot use that, e.g., $E_{0,k_i}(u^*_{n_i} + v_i) = E_{0,k_i}(u^*_{\infty} + v_i) + o(1)$ since the regularity of $E$ is assumed only in a finite neighborhood of $u^*_{\infty}$ while $\|v_i\|$ may not be too large.

We next use that $\calP(u^*_{n_i} + v_i) = \calP(u^*_{n_i} + \bar{v}_i)$ for $i$ large enough, and use the difference $E_\infty(u^*_{n_i} + \bar{v}_i) - E_\infty(u^*_{n_i})$ in the following way:
\begin{align*}
\tilde{E}_{0,n_i,k_i}(v_i)
=~&
		\big(E_\infty(u^*_{n_i} + \bar{v}_i) - E_\infty(u^*_{n_i})\big)
	\\ &+
		\big(
		V(u^*_{n_i, r_i-1} + \bar{v}_{i,r_i-1}, u^*_{n_i, r_i} + \tilde{v}_{i,r_i})
		- V(u^*_{n_i, r_i-1} + \bar{v}_{i,r_i-1}, u^*_{n_i, r_i})
		\big)
	\\ &+
		\big(
		V(u^*_{n_i, -r_i+1} + \bar{v}_{i,-r_i+1}, u^*_{n_i, -r_i} + \tilde{v}_{i,-r_i})
		- V(u^*_{n_i, -r_i+1} + \bar{v}_{i,-r_i+1}, u^*_{n_i, -r_i})
		\big)
	\\ &+
		\sum_{\substack{\xi\in\calL_{k_i} \\ |\xi| > r_i-\half}} \big(
		V(u^*_{n_i, \xi-\half} + \tilde{v}_{i,\xi-\half}, u^*_{n_i, \xi+\half} + \tilde{v}_{i,\xi+\half})
		- V(u^*_{n_i, \xi-\half}, u^*_{n_i, \xi+\half})
		\big)
	\\ &+
		V^\bc_{0,n_i}(u^*_{n_i, k_i} + \tilde{v}_{i,k_i})
		- V^\bc_{0,n_i}(u^*_{n_i, k_i})
		+ V^\bc_{0,n_i}(u^*_{n_i, -k_i} + \tilde{v}_{i,-k_i})
		- V^\bc_{0,n_i}(u^*_{n_i, -k_i})
.
\end{align*}

We estimate the first term using \eqref{eq:defect-global}:
\[
E_\infty(u^*_{n_i} + \bar{v}_i) - E_\infty(u^*_{n_i})
\geq
\smfrac{\gamma_E}{2} \|\bar{v}_i + u^*_{n_i}-u^*_{\infty}\|^2 + E_\infty(u^*_{\infty}) - E_\infty(u^*_{n_i})
=
\smfrac{\gamma_E}{2} \|\bar{v}_i\|^2 + o(1)
\]
as $i\to\infty$, where we use that $\|u^*_{n_i}-u^*_{\infty}\|\to 0$ and the continuity of $E_\infty$.
We next use
(i) the growth assumptions on the $V$ and $V^\bc$ functions,
(ii) that $\bar{v}_{i, \pm (r_i-1)} = v_{\infty, \pm (r_i-1)} + o(1)\to 0$,
(iii) the respective convergence properties of $u^*_{n_i}$, and
(iv) that $V(0,0)=0$, to likewise estimate the rest of the terms:
\begin{align*}
\tilde{E}_{0,n_i,k_i}(v_i)
\geq~&
		o(1)
		+
		\smfrac{\gamma_E}{2} \|\bar{v}_i\|^2
		+
		\smfrac{\gamma_V}{2} \big(|\tilde{v}_{i,r_i}|^2 + |\tilde{v}_{i,-r_i}|^2
		\big)
		+
		\smfrac{\gamma_V}{2}
		\sum_{\substack{\xi\in\calL_{k_i} \\ |\xi| > r_i-\half}}
		\big(|\tilde{v}_{i,\xi-\half}|^2 + |\tilde{v}_{i,\xi+\half}|^2
		\big)
\\ \geq~&
o(1) + \gamma_E \smfrac{\|\bar{v}_i \|^2}{2} + \gamma_V \smfrac{\|\tilde{v}_i\|^2}{2}
\\ \geq~&
\min\{\gamma_E, \gamma_V\} \smfrac{\| \bar{v}_i \|^2 + \| \tilde{v}_i \|^2}{2}
+ o(1)
~=~
\min\{\gamma_E, \gamma_V\} \smfrac{\| v_i \|^2}{2}
+ o(1)
,
\end{align*}
where in the last step we used that $(\bar{v}_i, \tilde{v}_i)_{\ell^2}\to (v_\infty, 0)_{\ell^2} =0$.
This is a contradiction since
\[
\liminf_{i\to\infty}
\frac{\tilde{E}_{0,n_i,k_i}(v_i )}{\|v_i\|^2} \geq \smfrac12 \min\{\gamma_E, \gamma_V\}.
\]

\medskip \noindent {\it Case 2 ($\lim_{i\to\infty}\|v_i\| =\infty$).}
Similarly expanding $E_{0,k_i}(u^*_{n_i} + v_i)$ and using \eqref{eq:orig_ass:P_bound}, \eqref{eq:orig_ass:V_growth}, \eqref{eq:orig_ass:Vbc_growth1}, and \eqref{eq:orig_ass:Vbc_growth2} quickly leads to a contradiction:
\[
\frac{E_{0,k_i}(u^*_{n_i} + v_i)}{\|v_i\|^2}
-\frac{E_{0,k_i}(u^*_{n_i})}{\|v_i\|^2}
\geq
\frac{-\tilde{C}_\calP (1+\|u^*_{n_i}+v_i\|)}{\|v_i\|^2}
+ \frac{\gamma_V \|u^*_{n_i}+v_i\|^2}{2 \|v_i\|^2}
-\frac{E_{0,k_i}(u^*_{n_i})}{\|v_i\|^2}
\to
	\smfrac{\gamma_V}{2}
.
\]

\medskip \noindent {\it Case 3 ($\lim_{i\to\infty}\|v_i\|=0$).}
Due to a uniform bound on $\dddel \tilde{E}_{0,n_i,k_i}(v)$ near $v=0$ we have
\begin{align*}
\frac{2 \tilde{E}_{0,n_i,k_i}(v_i)}{\|v_i\|^2}
=~&
\|v_i\|^{-2} \<\ddel \tilde{E}_{0,n_i,k_i}(0) v_i, v_i\> + O(\|v_i\|)
\\=~&
\|v_i\|^{-2} \<\ddel E_{0,k_i}(u^*|_{\calB_{k_i}}) v_i, v_i\> + o(1)
\\\geq~&
\min\{\gamma_E,\gamma_V\} + o(1)
,
\end{align*}
where in the last step we used \eqref{eq:defect_stab}.
This is again a contradiction.
\end{proof}

{\bf We will omit tildes in the rest of this section.}
In particular, we choose $\eps$ in Lemma \ref{lem:ass} so that $\tilde{\gamma}_V$ and $\tilde{\gamma}_E$ are both positive.

\subsection{Expansions of Gaussian-like Integrals}\label{sec:expansion_formulation}

Here we formulate two lemmas stating some expansions of Gaussian-like integrals in power series.
Since these results are just variations of some standard results, we provide the proofs in Appendix \ref{sec:expansion_proofs}.

For $f:\bbR^n\to\bbR$ we define the norms $\|f\|_{C_{\gamma}} := \sup_{x\in\bbR^n} \big| f(x) e^{\frac{\gamma}{2} |x|^2}\big|
$ and $\|f\|_{L^2_\gamma}:= \| f e^{\frac{\gamma}{2} |\bullet|^2} \|_{L^2} $, and the associated spaces of continuous functions $C_{\gamma}$ and Sobolev spaces $L^2_{\gamma}$.

\begin{lemma}\label{lem:Gaussian_expansion}
Let $E_\theta(u)$, $E:\bbR_+\times\bbR^n \to \overline{\bbR}$ be defined by
\begin{align}\label{eq:Gaussian_expansion:E}
E_\theta(v) =~& \sum_{m=0}^{M} \theta^{m} E^{(m)}(v)
+ \theta^{1+M} E^{(1+M^+)}_\theta(v)
\end{align}
for some $M\in\bbN$.
Assume for all $0<\theta\leq\theta_0$ and $m=0,\ldots,M$,
\begin{align} \notag
&&&&&\hspace{-7em}
E^{(m)}, E^{(1+M^+)}_\theta \text{ are measurable with values in $\overline{\bbR}$}
,\\ \label{eq:Gaussian_expansion:Eell_norm}
&&&&&\hspace{-7em}
\big\|E^{(m)}\big\|_{\C^{2M-2m+3}(B_{2r})} \leq c
,\\ \label{eq:Gaussian_expansion:remainder_est}
&&&&&\hspace{-7em}
\big\|E_\theta^{(1+M^+)}\big\|_{\C(B_{2r})} \leq c
,\\ \label{eq:Gaussian_expansion:Ezero}
&&& E^{(0)}(0) = 0
,&&
E^{(0)}(v) \geq \gamma |v|^2/2
\quad\forall v\in\bbR^n
,&&\\ \label{eq:Gaussian_expansion:Erest_growth}
&&& E^{(m)}(v) \geq -c
\quad\forall v\in\bbR^n,
&&
E_\theta^{(1+M^+)}(v) \geq -c
\quad\forall v\in\bbR^n
,&&
\end{align}
for some $\gamma\geq 0$, $c>0$, $r>0$.
Denote $H := \ddel E^{(0)}(0)$ and $L := 2M$.
Then
\begin{itemize}
\item[(a)]
there exists a unique Taylor expansion, in terms of powers of $\sqrt{\theta}$,
\begin{equation}\label{eq:Gaussian_expansion_predef}
e^{\frac12 \<H v,v\>} e^{-\theta^{-1} E_\theta(\sqrt{\theta} v)}
=
\sum_{\ell=0}^{L-1} \theta^{\ell/2} Q^{(\ell)}(v) + \theta^{L/2} Q_\theta^{(L^+)}(v)
,
\end{equation}
where (i) each $Q^{(\ell)}$ is a polynomial of degree at most $\ell+2$ \emph{whose coefficients are linear combinations of $\del^k E^{(m)}(0)$ for $k+2m \leq L+2$}, (ii) $Q^{(0)}=e^{-E^{(1)}(0)}$, and (iii) $Q^{(\ell)}(-v) = (-1)^\ell Q^{(\ell)}(v)$.

\item[(b)]
Furthermore, the terms and the remainder of the following representation,
\begin{equation}\label{eq:Gaussian_expansion_def}
e^{-\theta^{-1} E_\theta(\sqrt{\theta} v)}
=
\sum_{\ell=0}^{L-1} \theta^{\ell/2} Q^{(\ell)}(v) e^{-\frac12\<H v,v\>} + \theta^{L/2} Q_\theta^{(L^+)}(v) e^{-\frac12\<H v,v\>}
,
\end{equation}
can be bounded as follows:
\begin{align}\label{eq:Gaussian_expansion:terms}
|Q^{(\ell)}(v) e^{-\frac12\<H v,v\>}|
\leq~&
C_{\eps} e^{-\frac{\gamma-\eps}{2} \|v\|^2}
~~~~\forall v\in\bbR^n,
\\ \label{eq:Gaussian_expansion:remainder}
|Q_\theta^{(L^+)}(v) e^{-\frac12\<H v,v\>}|
\leq~&
C_{\eps} e^{-\frac{\gamma-\eps}{2} \|v\|^2}
~~~~\forall v\in\bbR^n
~~~~\forall \theta\in(0,\theta_0]
,
\end{align}
for any $\eps>0$ where $C_{\eps}$ depends only on $\eps$, $M$, $c$ and $n$.
\end{itemize}
\end{lemma}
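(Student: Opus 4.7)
The plan is to set $F_\theta(v) := \theta^{-1} E_\theta(\sqrt\theta v) - \tfrac12 \<Hv,v\>$, expand $F_\theta$ in powers of $\sqrt\theta$ with polynomial coefficients in $v$, and then expand the exponential $e^{-F_\theta(v)}$. The hypothesis $E^{(0)}(0)=0$ together with \eqref{eq:Gaussian_expansion:Ezero} forces $\del E^{(0)}(0)=0$ and $H\geq\gamma I$, so the $\tfrac12\<Hv,v\>$ piece cancels exactly and $F_\theta(v) = E^{(1)}(0) + O(\sqrt\theta)$ on compact $v$-regions.

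First I would Taylor-expand each $\theta^{m-1} E^{(m)}(\sqrt\theta v)$ about $v=0$ using the $(2M-2m+3)$-fold regularity in \eqref{eq:Gaussian_expansion:Eell_norm}, to orders arranged so that the cumulative remainder is $O(\theta^{L/2})$ on the ball $|v|\leq r/\sqrt\theta$. Gathering contributions of equal $\sqrt\theta$-order yields
\[
F_\theta(v) \;=\; \sum_{j=0}^{L-1} \theta^{j/2} P_j(v) + \theta^{L/2} R_\theta(v),
\]
where each $P_j$ is an explicit polynomial in $v$ whose monomials of degree $k$ have coefficients proportional to $\del^k E^{(m)}(0)$ with $2m+k = j+2$; in particular $P_0 \equiv E^{(1)}(0)$, and $R_\theta$ admits a polynomial-in-$|v|$ pointwise bound derived from the top-order derivatives and \eqref{eq:Gaussian_expansion:remainder_est}. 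Next I would write $e^{-F_\theta(v)} = e^{-E^{(1)}(0)} \exp\bigl(-\sum_{j\geq 1}\theta^{j/2} P_j - \theta^{L/2} R_\theta\bigr)$ and apply the finite Taylor expansion of $e^{-x}$ with Lagrange remainder to the second factor, then multinomially expand the resulting polynomial powers and collect by total power of $\sqrt\theta$. This defines $Q^{(\ell)}$ for $\ell<L$ and the residual $Q_\theta^{(L^+)}$. By construction $Q^{(0)} = e^{-E^{(1)}(0)}$, each $Q^{(\ell)}$ is a polynomial in $v$ whose coefficients are products of the $\del^k E^{(m)}(0)$ with $2m+k \leq L+2$, and the parity $Q^{(\ell)}(-v) = (-1)^\ell Q^{(\ell)}(v)$ follows because each $P_j$ contains only monomials of degree $j+2-2m$, all of parity $j \bmod 2$. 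Uniqueness is a standard consequence of uniqueness of Taylor series in $\sqrt\theta$.

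For the bounds in part (b), the polynomial estimate \eqref{eq:Gaussian_expansion:terms} on $Q^{(\ell)} e^{-\frac12\<Hv,v\>}$ is immediate from $H\geq\gamma I$ and the polynomial form of $Q^{(\ell)}$. For \eqref{eq:Gaussian_expansion:remainder} I would split into two regions. On the small region $|v|\leq r/\sqrt\theta$, the Taylor remainders combine to give $|\theta^{L/2} Q_\theta^{(L^+)}(v)| \leq C\theta^{L/2}(1+|v|^N)$ for some fixed $N$; multiplying by $e^{-\frac12\<Hv,v\>}$ and absorbing polynomial growth into the Gaussian with a degradation from $\gamma$ to $\gamma-\eps$ yields the claim. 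On the large region $|v|>r/\sqrt\theta$, Taylor expansion is unavailable, but the pointwise growth assumptions \eqref{eq:Gaussian_expansion:Ezero}--\eqref{eq:Gaussian_expansion:Erest_growth} give $\theta^{-1} E_\theta(\sqrt\theta v) \geq \tfrac{\gamma}{2}|v|^2 - C'$ uniformly in $\theta\in(0,\theta_0]$, hence $e^{-F_\theta(v)} e^{-\frac12\<Hv,v\>} \leq e^{C'} e^{-\gamma|v|^2/2}$. Subtracting the already-bounded terms $\sum_{\ell<L}\theta^{\ell/2} Q^{(\ell)} e^{-\frac12\<Hv,v\>}$ leaves a Gaussian bound on $\theta^{L/2} Q_\theta^{(L^+)}(v) e^{-\frac12\<Hv,v\>}$, and the inequality $\theta^{-L/2}\leq (|v|/r)^L$ on this region absorbs the unwanted $\theta^{-L/2}$ factor into a polynomial in $|v|$, which is in turn absorbed into the Gaussian.

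The main obstacle I expect is precisely this large-$|v|$ estimate for $Q_\theta^{(L^+)}$: Taylor expansion is unavailable there, yet the final bound must be uniform in $\theta\in(0,\theta_0]$. The absorption trick $\theta^{-L/2}\leq(|v|/r)^L$ on that region is what saves the day, trading bad $\theta$-dependence for extra polynomial decay in $|v|$ which is absorbed into the Gaussian factor at the cost of degrading its exponent from $\gamma$ to $\gamma-\eps$.
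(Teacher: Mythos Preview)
Your overall strategy is the paper's: expand $F_\theta(v)=\theta^{-1}E_\theta(\sqrt\theta v)-\tfrac12\<Hv,v\>$ in powers of $\sqrt\theta$, compose with $e^{-x}$, then split $v$-space into a near region (Taylor control) and a far region (growth hypotheses). Part~(a), the bound \eqref{eq:Gaussian_expansion:terms}, and your far-region argument for \eqref{eq:Gaussian_expansion:remainder} are all fine.

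The gap is your near-region claim that $|Q_\theta^{(L^+)}(v)|\le C(1+|v|^N)$ on $|v|\le r/\sqrt\theta$. This is \emph{false} as stated. The Lagrange remainder of $e^{-Y}$ carries a factor $e^{-\xi Y}$ with $\xi\in(0,1)$, and on your near region $Y=\sum_{j\ge1}\theta^{j/2}P_j(v)+\theta^{L/2}R_\theta(v)$ is not small: already the cubic piece $\tfrac16\sqrt\theta\,\dddel E^{(0)}(0)[v,v,v]$ satisfies $\sqrt\theta|v|^3\le r|v|^2$, which is of order $r^3/\theta$ near the boundary. When $Y$ is large and negative, $e^{-\xi Y}$ is exponentially large and no polynomial bound on $Q_\theta^{(L^+)}$ alone is possible. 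Concretely, with $n=1$, $M=1$, and $E_\theta(w)=\tfrac12w^2+\tfrac16w^3$ on $|w|\le1$ (extended to satisfy the global growth with $\gamma=2/3$), one computes $Q_\theta^{(2^+)}(-1/\sqrt\theta)\sim\theta^{-1}e^{1/(6\theta)}$, which beats any polynomial in $|v|=1/\sqrt\theta$.

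Two repairs work. The paper's is to shrink the near region to $|v|\le\theta^{-\alpha}$ with $\alpha\le\tfrac{1}{6L+6}$: then each $\theta^{j/2}P_j(v)$ is $O(\theta^{j/2-\alpha(j+2)})\to0$, so $Y$ stays uniformly small, the exponential expansion is a genuine perturbation, and the remainder is $O(1)$; on $|v|>\theta^{-\alpha}$ the Gaussian tail $e^{-\frac{\eps}{4}|v|^2}\le e^{-\frac{\eps}{4}\theta^{-2\alpha}}$ still absorbs $\theta^{-L/2}$. Alternatively, your splitting can be saved if you never separate the Gaussian from the Lagrange factor: write
\[
e^{-\xi Y}e^{-\frac12\<Hv,v\>}
=\bigl(e^{-Y-\frac12\<Hv,v\>}\bigr)^{\xi}\bigl(e^{-\frac12\<Hv,v\>}\bigr)^{1-\xi},
\]
observe that $Y+\tfrac12\<Hv,v\>=\theta^{-1}E_\theta(\sqrt\theta v)-E^{(1)}(0)$ so the first bracket is $\le Ce^{-\frac{\gamma}{2}|v|^2}$ by \eqref{eq:Gaussian_expansion:Ezero}--\eqref{eq:Gaussian_expansion:Erest_growth}, and the second is $\le e^{-\frac{\gamma}{2}|v|^2}$ since $H\ge\gamma I$; combined with $|Y|^L\le C\theta^{L/2}(1+|v|^{N'})$ this yields the required bound directly. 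Either way, the step you flagged as the main obstacle (the far region) is actually the routine one; the real subtlety is controlling $e^{-\xi Y}$ on the near region, and that requires either shrinking the region or keeping the Gaussian attached throughout.
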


\begin{lemma}\label{lem:expansion_generic}
Let the $E$ functions, $M$, $L$, $c$, and $H := \ddel E^{(0)}$ be as in Lemma \ref{lem:Gaussian_expansion}.
Let $F_\theta(v)$, $F:\bbR^+\times\bbR^n \to \overline{\bbR}$ be defined by
\begin{align*}
F_\theta(v) =~& \sum_{\ell=0}^{L-1} \theta^{\ell/2} F^{(\ell)}(v)
+ \theta^{L/2} F_{\theta}^{(L^+)}(v),
\end{align*}
where for all $\theta\leq\theta_0$ and $\ell=0,1,\ldots,L-1$
\begin{align}
\notag
&&& \text{$F^{(\ell)}$ is measurable},
&&\|F^{(\ell)}\|_{L^2_{-\gamma'}(\bbR^n)} \leq c,
&&\\ \notag
&&& \text{$F_\theta^{(L^+)}$ is measurable},
&& \|F_\theta^{(L^+)}\|_{L^2_{-\gamma'}(\bbR^n)} \leq c
&&\\
&&& F_\theta(v) \geq 0
\quad\forall v\in\bbR^n
,
&&
F^{(0)}(v) \geq c^{-1}
\quad \forall v\in B_{c^{-1}}
,
&&
\label{eq:expansion_generic:first_term_ass}
\end{align}
where $\gamma'<\gamma$.

Consider a measure $\mu_{\theta,E,F}$ defined by
\begin{equation}\label{eq:expansion_generic:def}
\<\mu_{\theta,E,F}, A \> :=
\frac{%
	\int_{\bbR^n} A(\sqrt{\theta}v)
	F_\theta(v) e^{-\theta^{-1} E_\theta(\sqrt{\theta}v)} \dv
}{%
	\int_{\bbR^n}
	F_\theta(v) e^{-\theta^{-1} E_\theta(\sqrt{\theta}v)} \dv
}
.
\end{equation}

\begin{itemize}
\item[(a)]
Then there exist distributions $\mu_{E,F}^{(m)}$ $(m=0,\ldots,M-1)$ and $\mu_{\theta,E,F}^{(M+)}$ such that
\begin{align} \label{eq:expansion_generic}
\mu_{\theta,E,F}
=
\sum_{m=0}^{M-1}
\theta^{m} \mu_{E,F}^{(m)}
+ \theta^{M/2} \mu_{\theta,E,F}^{(M+)}
\qquad \forall \theta>0
,
\end{align}

\item[(b)] There exists $\theta_0$ and $C$, that may depend on $\gamma$, $\gamma'$, $M$, $c$, and $n$, such that
\[
\max_{0\leq m<M} \big\|\mu_{E,F}^{(m)}\big\|_{\C^{-L}_0}\leq C
\quad\text{and}\quad
\big\|\mu_{\theta,E,F}^{(M^+)}\big\|_{\C^{-L}_0}\leq C
\qquad \forall \theta\leq \theta_0
.
\]

\item[(c)]
Furthermore, each $\mu_{E,F}^{(m)}$ is a linear combination of the Dirac delta and its derivatives.
In particular,
\[
\<\mu_{E,F}^{(0)}, A\> = A(0)
\]
and additionally, in the case when $F_\theta(v) \equiv 1$,
\begin{align*}
\<\mu^{(1)}, A\> \equiv~&
\smfrac12 \ddel A(0) \!:\! H^{-1}
-
\del A(0) \cdot H^{-1} \big(
	\del E^{(1)}(0)+\smfrac12\dddel E^{(0)}(0) \!:\! H^{-1}
\big)
.
\end{align*}

\item[(d)] For all $m=0,1,\ldots,L-1$, $\mu_{E,F}^{(m)}$ depends continuously on $\del^\ell E^{(k)}(0)$ ($\ell+2 k\leq 2m+2$) and $F^{(\ell)} \in L^2_{-\gamma'}$ ($\ell \leq m$).
\end{itemize}
\end{lemma}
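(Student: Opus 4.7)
The plan is to reduce the statement to manipulation of formal $\sqrt{\theta}$-power series by (i) applying Lemma \ref{lem:Gaussian_expansion}(b) to the exponential factor inside both the numerator and denominator of \eqref{eq:expansion_generic:def}, (ii) multiplying against the given expansion of $F_\theta$ and against the Taylor expansion of $A(\sqrt{\theta}v)$ at $v=0$ up to order $2M-1$, (iii) integrating termwise, and finally (iv) dividing the numerator expansion by the denominator expansion via a truncated geometric series.

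First, Lemma \ref{lem:Gaussian_expansion}(b) writes $e^{-\theta^{-1}E_\theta(\sqrt{\theta}v)} = \sum_{\ell=0}^{L-1}\theta^{\ell/2}Q^{(\ell)}(v)e^{-\frac12\<Hv,v\>} + \theta^{L/2}Q^{(L^+)}_\theta(v)e^{-\frac12\<Hv,v\>}$ with each summand uniformly bounded by $C_\eps e^{-\frac{\gamma-\eps}{2}|v|^2}$. Multiplying by $F_\theta=\sum_\ell\theta^{\ell/2}F^{(\ell)}+\theta^{L/2}F^{(L^+)}_\theta$ and by the Taylor series of $A(\sqrt{\theta}v) = \sum_{k=0}^{2M-1}\frac{\theta^{k/2}}{k!}\<\del^k A(0),v^{\otimes k}\> + \theta^M R_A(\sqrt{\theta}v)$ produces a formal $\sqrt{\theta}$-series whose coefficients are integrands of the form (polynomial in $v$)$\times F^{(k)}(v)\times e^{-\frac12\<Hv,v\>}$. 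These integrands are in $L^1(\bbR^n)$ by Cauchy--Schwarz, using $\gamma'<\gamma$: write the product as $[P(v)e^{-\frac12\<Hv,v\>}e^{\gamma'|v|^2/2}]\cdot[F^{(k)}(v)e^{-\gamma'|v|^2/2}]$, where the first factor lies in $L^2$ because $H\ge\gamma I>\gamma' I$, and the second is in $L^2$ by $F^{(k)}\in L^2_{-\gamma'}$. The same estimate controls the Taylor remainder $R_A$ and the $Q^{(L^+)}_\theta$, $F^{(L^+)}_\theta$ remainders uniformly in $\theta\in(0,\theta_0]$. Termwise integration then yields
\[
N_\theta(A) = \sum_{\ell=0}^{L-1}\theta^{\ell/2}N^{(\ell)}(A) + \theta^{L/2}R^N_\theta(A),
\qquad
D_\theta = \sum_{\ell=0}^{L-1}\theta^{\ell/2}D^{(\ell)} + \theta^{L/2}R^D_\theta,
\]
with bounded coefficients and remainders. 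Moreover each $N^{(\ell)}(A)$ is a linear combination of $\<\del^k A(0), T^{(\ell,k)}\>$, $k\le\ell$, where the tensors $T^{(\ell,k)}$ are explicit Gaussian moments of polynomials against $F^{(j)}(v)e^{-\frac12\<Hv,v\>}$.

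Next, I verify $D^{(0)}>0$: since $Q^{(0)}=e^{-E^{(1)}(0)}$ and $F^{(0)}\ge 0$ a.e.\ (obtained by sending $\theta\to 0^+$ in $F_\theta\ge0$) with $F^{(0)}\ge c^{-1}$ on $B_{c^{-1}}$,
\[
D^{(0)} \;=\; e^{-E^{(1)}(0)}\int_{\bbR^n}F^{(0)}(v)\,e^{-\frac12\<Hv,v\>}\dv \;\ge\; c'>0,
\]
with $c'$ depending only on the parameters listed in (b). Choosing $\theta_0$ small enough that $|D_\theta-D^{(0)}|\le\tfrac12 D^{(0)}$, I invert $D_\theta$ via the truncated geometric series $D_\theta^{-1}=(D^{(0)})^{-1}\sum_{k=0}^{L-1}\bigl(-(D_\theta-D^{(0)})/D^{(0)}\bigr)^k + \theta^{L/2}(\text{remainder})$. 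Multiplying $N_\theta(A)$ by this expansion and collecting powers of $\sqrt{\theta}$ gives an expansion of $\<\mu_{\theta,E,F},A\>$ in $\sqrt{\theta}$. I define $\<\mu_{E,F}^{(m)},A\>$ as the integer-power coefficients ($m=0,\ldots,M-1$) and absorb the half-integer powers and the series remainder into $\theta^{M/2}\<\mu_{\theta,E,F}^{(M^+)},A\>$; the Gaussian-decay bounds on the coefficients, together with the uniform lower bound on $D^{(0)}$, give (a) and (b). Since each $N^{(\ell)}(A)$ and the expansion of $D_\theta^{-1}$ are linear combinations of $\del^k A(0)$ for $k\le 2m$, every $\mu^{(m)}_{E,F}$ is a linear combination of the Dirac delta at $0$ and its derivatives, proving the first assertion of (c) and the continuity statement (d).

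For the explicit formulas in (c), $\<\mu^{(0)}_{E,F},A\>=A(0)$ is immediate from $N^{(0)}(A)=A(0)D^{(0)}$. When $F_\theta\equiv 1$, the parity $Q^{(\ell)}(-v)=(-1)^\ell Q^{(\ell)}(v)$ forces $D^{(\ell)}=0$ and $N^{(\ell)}(A)=0$ for every odd $\ell$, so the ratio contains only integer powers of $\theta$. The $\theta^1$ coefficient then receives contributions from $(k,j)\in\{(0,2),(2,0)\}$ plus the cross-term $-A(0)D^{(2)}/D^{(0)}$; substituting the explicit form of $Q^{(2)}$ supplied by Lemma \ref{lem:Gaussian_expansion} (a polynomial of degree $\le 4$ whose coefficients involve $H$, $\dddel E^{(0)}(0)$, and $\del E^{(1)}(0)$) and evaluating the Gaussian moments via $\int v\otimes v\,e^{-\frac12\<Hv,v\>}\dv = (2\pi)^{n/2}(\det H)^{-1/2}H^{-1}$ produces, after cancellation of the terms involving $\det H$ between numerator and denominator, the stated formula for $\<\mu^{(1)},A\>$.

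The main obstacle will be the uniform control of the $\theta^{M/2}$ remainder: the Taylor remainder for $A(\sqrt{\theta}v)$ contributes a factor $\theta^M\|v\|^{2M}\|A\|_{\C^{2M}}$ that must be integrated against the product $F_\theta(v)e^{-\frac12\<Hv,v\>}$ with bounds independent of $\theta\in(0,\theta_0]$. Matching the Gaussian rate $\frac{\gamma-\eps}{2}$ from Lemma \ref{lem:Gaussian_expansion}(b) against the $L^2_{-\gamma'}$-decay of $F^{(\ell)}_\theta$, and verifying that each term arising from the geometric series for $D_\theta^{-1}$ satisfies the same estimate, requires the bookkeeping to be done carefully but uses no new ingredients beyond Cauchy--Schwarz and the bounds already provided by Lemma \ref{lem:Gaussian_expansion}.
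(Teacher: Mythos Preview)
Your proposal follows essentially the same route as the paper: expand the exponential via Lemma~\ref{lem:Gaussian_expansion}, Taylor expand $A(\sqrt{\theta}v)$, multiply out against the given expansion of $F_\theta$, integrate termwise using the $L^2_{-\gamma'}$ bound on $F^{(\ell)}$ against the Gaussian decay from Lemma~\ref{lem:Gaussian_expansion}(b), show the leading denominator term is bounded below, invert, and compute $\mu^{(1)}$ explicitly. The only stylistic difference is that you invert $D_\theta$ with a truncated geometric series, while the paper packages this step in its general inversion Lemma~\ref{lem:inverse_expansion}; these are equivalent here.

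There is, however, one genuine gap. You write that you will ``absorb the half-integer powers and the series remainder into $\theta^{M/2}\langle\mu^{(M^+)}_{\theta,E,F},A\rangle$''. The lowest half-integer power present is $\theta^{1/2}$, so moving it into a $\theta^{M/2}$ (or $\theta^{M}$) remainder introduces a factor $\theta^{(1-M)/2}$ that blows up as $\theta\to 0$ whenever $M>1$; this would destroy the uniform bound in part~(b). The paper does \emph{not} absorb these terms; it asserts instead that $\hat F^{(\ell)}(-v)=(-1)^{\ell}\hat F^{(\ell)}(v)$ and likewise for $\tilde F^{(\ell)}$, so that all odd-order integrals $\hat f^{(\ell)}$, $\tilde f^{(\ell)}$ vanish and only integer powers of $\theta$ survive. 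You already use exactly this parity argument in the special case $F_\theta\equiv 1$; the fix is to use it for general $F$ as well. Note that this parity actually requires $F^{(k)}(-v)=(-1)^{k}F^{(k)}(v)$, which is not listed among the hypotheses of the lemma but is satisfied in every application the paper makes (it is inherited through the iteration in Lemma~\ref{lem:varphi_tilde_expansion} and Lemma~\ref{lem:elimination_operators}).

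A minor point on your sketch of $\mu^{(1)}$: the $\theta^{1}$ coefficient of the numerator receives a contribution from $(k,j)=(1,1)$ as well, namely $\int \langle\del A(0),v\rangle\, Q^{(1)}(v)\,e^{-\frac12\langle Hv,v\rangle}\,\dd v$, and this is precisely where the terms $\del A(0)\cdot H^{-1}\del E^{(1)}(0)$ and $\del A(0)\cdot H^{-1}(\tfrac12\dddel E^{(0)}(0){:}H^{-1})$ come from. Your final formula is correct, but the bookkeeping description should include this pairing; the paper records this computation separately as Lemma~\ref{lem:expansion_aux}.
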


\subsection{Main Line of Arguments}

We start with the following result.
\begin{theorem}\label{th:some_expansion}
Parts (a) and (d, $N<\infty$) of
Theorem \ref{th:uniform_expansion} hold.
Moreover, part (c, $N<\infty$) also holds if the estimates \eqref{eq:uniform_expansion:bounds} are relaxed as follows:
\[
\max_{0\leq\ell<\lfloor (L-1)/2 \rfloor} \big\|\mu_{N}^{(m)}\big\|_{\C^{-L}_0}\leq C'(N)
\quad\text{and}\quad
\big\|\mu_{\theta,N}^{(M^+)}\big\|_{\C^{-L}_0}\leq C'(N)
\qquad \forall\theta\leq \theta_0
,
\]
where the bound $C'(N)$ may depend on $N$ (but is independent of $\theta$).
\end{theorem}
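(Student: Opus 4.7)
The plan is to reduce the theorem to a direct application of Lemma \ref{lem:expansion_generic}. After the change of variables from Section \ref{sec:finite_lattice:change}, which shifts $u_N^*$ to the origin and uses that $u_N^*$ is a critical point of $E_{0,N}$, the Gibbs distribution takes the form
\[
\<\mu_{\theta,N}, A\> = \frac{\int_{\calU_N} A(u_N^* + \sqrt{\theta}\, v)\, e^{-\theta^{-1} \tilde{E}_{\theta,N}(\sqrt{\theta} v)} \dv}{\int_{\calU_N} e^{-\theta^{-1} \tilde{E}_{\theta,N}(\sqrt{\theta} v)} \dv}.
\]
This is exactly the structure \eqref{eq:expansion_generic:def} with $F_\theta \equiv 1$ and shifted test function $\tilde A(v) := A(u_N^* + v)$. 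The expansion of $V^\bc_\theta$ in powers of $\theta$ induces a corresponding decomposition $\tilde{E}_{\theta,N} = \sum_{m=0}^{M-1} \theta^m \tilde{E}_N^{(m)} + \theta^M \tilde{E}_{\theta,N}^{(M^+)}$, since the bulk terms and $\calP$ are $\theta$-independent.

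Next I would verify the hypotheses of Lemmas \ref{lem:Gaussian_expansion} and \ref{lem:expansion_generic}. The regularity bounds \eqref{eq:Gaussian_expansion:Eell_norm}--\eqref{eq:Gaussian_expansion:remainder_est} follow at once from \eqref{eq:ass:P_bound_loc}--\eqref{eq:ass:Vbc_bound}, while the crucial positivity assumption $\tilde E_N^{(0)}(v) \geq \tfrac{\tilde\gamma_E}{2}\|v\|^2$ is exactly \eqref{eq:ass:E_convex} (and $\tilde{E}_N^{(0)}(0) = 0$ is built into the definition). The lower bounds on the higher-order terms and the remainder \eqref{eq:Gaussian_expansion:Erest_growth} are \eqref{eq:ass:Vbc_growth}. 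Lemma \ref{lem:expansion_generic}(a) then yields the expansion \eqref{eq:expansion_main}, proving Part (a). The relaxed Part (c, $N<\infty$) is immediate from Lemma \ref{lem:expansion_generic}(b); the constant produced there depends on the ambient dimension $\dim \calU_N = 2N+1$, which is exactly why only an $N$-dependent bound $C'(N)$ can be asserted at this stage. Part (d, $N<\infty$) follows from Lemma \ref{lem:expansion_generic}(c): each term is a linear combination of Dirac deltas at the origin, which un-shifts to a combination centered at $u_N^*$, and the explicit formulas \eqref{eq:mu0_explicit}--\eqref{eq:mu1_explicit} are obtained via the identifications $\del \tilde A(0) = \del A(u_N^*)$, $\ddel \tilde A(0) = \ddel A(u_N^*)$, $\ddel \tilde E_N^{(0)}(0) = H_N$, and $\del \tilde E_N^{(1)}(0) = \del E_N^{(1)}(u_N^*)$.

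The real obstacle is not this theorem but the subsequent upgrade, to be performed in Section \ref{sec:infinite}, from the $N$-dependent bound $C'(N)$ to a uniform bound $C(K)$ depending only on the support size $K$ of the test function. That sharpening is what will permit the passage to the thermodynamic limit $N\to\infty$, and it will require exploiting the off-diagonal decay of $H_N^{-1}$ together with the one-dimensional structure and the quadratic growth \eqref{eq:ass:V_growth}.
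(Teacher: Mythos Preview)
Your proof is correct and matches the paper's own argument exactly: apply Lemma \ref{lem:expansion_generic} with $F_\theta\equiv 1$ after the change of variables of Section \ref{sec:finite_lattice:change}, and read off parts (a), (d), and the $N$-dependent version of (c). One small correction to your closing outlook (not to the proof itself): the upgrade from $C'(N)$ to an $N$-independent $C(K)$ is carried out still in Section \ref{sec:finite_lattice}, not Section \ref{sec:infinite}, and proceeds via the transfer-operator reduction of Proposition \ref{prop:expansion_reduction} and Lemma \ref{lem:uniform_est:key} rather than through off-diagonal decay of $H_N^{-1}$.
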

\begin{proof}
This is an immediate consequence of Lemma \ref{lem:expansion_generic} with $F(\theta, v) \equiv 1$ upon checking that its assumptions follow from the assumptions on $V$ and $V^\bc$.
\end{proof}


Thus, for the case $N<\infty$, it remains to show that the bound $C'(N)$ can be chosen independently of $N$.
To that end we introduce the following proposition, whose formulation and proof are totally analogous to \cite{BlancLeBrisLegollEtAl2010}.

\begin{proposition}\label{prop:expansion_reduction}
For $A\in \C^{2M}(\calU_{K})$ there holds
\begin{equation}\label{eq:expansion_form}
\<\mu_{\theta,N}, A\>
=
\frac{%
	\int_{\calU_{K}}
	A(u^*_N + \sqrt{\theta} v)
	e^{-\theta^{-1} E_{N,K}(\sqrt{\theta} v)}
	\hat{\varphi}_{\theta,N,K}(v_{K})
	\hat{\varphi}_{\theta,N,K}(v_{-K})
	\dv
}{%
	\hphantom{A(u^*_N + \sqrt{\theta} v)}
	\int_{\calU_{K}}
	e^{-\theta^{-1} E_{N,K}(\sqrt{\theta} v)}
	\hat{\varphi}_{\theta,N,K}(v_{K})
	\hat{\varphi}_{\theta,N,K}(v_{-K})
	\dv
}
\end{equation}
where
\begin{align}\notag
	E_{N,K}(v)
	:=~&
	\calP(v) +
	\sum_{\xi=-K+\half}^{K-\half} V_{N,\xi}(v_{\xi-\half}, v_{\xi+\half})
,\\
\label{eq:hatphi-def}
	\hat{\varphi}_{\theta,N,K}(x)
	:=~&
	\frac{\varphi_{\theta,N,K}(x)}{\|\varphi_{\theta,N,K}\|_{L^2}}
,\\
\label{eq:phi-def}
	\varphi_{\theta,N,K}(x)
	:=~&
	\oprod_{\xi=K+\half}^{N-\half}
			P_{\theta,N,\xi}[p^\bc_\theta](x)
,\\
\label{eq:Ptheta-def}
	P_{\theta,N,\xi}[\varphi](x)
	:=~&
	\int_{-\infty}^{\infty}
		p_{\theta,N,\xi}(x,y) \varphi(y)
	\dy
	\quad \forall\theta>0
,
\\ \notag
	p_{\theta,N,\xi}(x,y)
	:=~&
	e^{-\theta^{-1} V_{N,\xi} (\sqrt{\theta}x,\sqrt{\theta}y)}
	\quad \forall\theta>0
,\\ \notag
	p_{\theta,N}^\bc(x)
	:=~&
	e^{-\theta^{-1} V^\bc_{\theta,N} (\sqrt{\theta}x)}
	\quad \forall\theta>0
,
\end{align}
and where $\smalloprod$ stands for the composition of multiple operators; provided that $\|\varphi_{\theta,N,K}\|_{L^2}<\infty$.
\end{proposition}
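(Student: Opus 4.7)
I would start by reducing the Gibbs average to its shifted, rescaled form. Substituting $v \mapsto u_N^* + \sqrt{\theta}\,v$ in the definition of $\<\mu_{\theta,N}, A\>_{\calU_N}$ introduces a Jacobian $\theta^{|\calB_N|/2}$ and an additive constant $E_{\theta,N}(u_N^*)$, both of which factor out identically in the numerator and denominator. Invoking the identity $\tilde{E}_{\theta,N}(\sqrt{\theta}v) = E_{\theta,N}(u_N^* + \sqrt{\theta}v) - E_{\theta,N}(u_N^*)$ from Section~\ref{sec:finite_lattice:change} and dropping tildes per the convention there, this yields
\[
\<\mu_{\theta,N}, A\> = \frac{\int_{\calU_N} A(u_N^* + \sqrt{\theta} v)\, e^{-\theta^{-1} E_{\theta,N}(\sqrt{\theta} v)} \dv}{\int_{\calU_N} e^{-\theta^{-1} E_{\theta,N}(\sqrt{\theta} v)} \dv}.
\]

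The next step is to apply Fubini to the outer coordinates $v_n$ with $K < |n| \leq N$. The exponent decomposes into the central piece $E_{N,K}(\sqrt{\theta}\, v|_{\calU_K})$ plus two decoupled outer blocks, one for each sign, each consisting of the chain of bonds $V_{N,\xi}$ with $K+\half \leq |\xi| \leq N-\half$ together with the boundary potential $V^{\bc}_{\theta,N}(\sqrt{\theta}v_{\pm N})$; the right block depends on $v_K$ only through the single coupling bond $V_{N,K+\half}$, and symmetrically for the left. Since $A$ is a function on $\calU_K$, Fubini factors the overall integral, reducing it to computing the two outer integrals $\psi^\pm(v_{\pm K})$, which multiply identically into both numerator and denominator.

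I evaluate $\psi^+(v_K)$ by integrating the outer variables in the order $v_N, v_{N-1}, \ldots, v_{K+1}$. The first integral $\int p^{\bc}_{\theta,N}(v_N)\, p_{\theta,N,N-\half}(v_{N-1}, v_N)\, dv_N$ equals $P_{\theta,N,N-\half}[p^{\bc}_{\theta,N}](v_{N-1})$ by \eqref{eq:Ptheta-def}; each subsequent integration over $v_{n+1}$ applies the operator $P_{\theta,N,n+\half}$ to the function accumulated so far. After all such integrations I obtain
\[
\psi^+(v_K) = \bigl(P_{\theta,N,K+\half} \circ P_{\theta,N,K+\threehalfs} \circ \cdots \circ P_{\theta,N,N-\half}\bigr)[p^{\bc}_{\theta,N}](v_K) = \varphi_{\theta,N,K}(v_K),
\]
matching \eqref{eq:phi-def}. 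The symmetry $V_{N,\xi} = V_{N,-\xi}$ noted in Section~\ref{sec:finite_lattice:change} then gives $\psi^-(v_{-K}) = \varphi_{\theta,N,K}(v_{-K})$ by an identical calculation. Multiplying numerator and denominator of the resulting expression by $\|\varphi_{\theta,N,K}\|_{L^2}^{-2}$, which is well-defined under the standing hypothesis, turns each $\varphi$ into $\hat{\varphi}$ and delivers \eqref{eq:expansion_form}.

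The main obstacle is essentially bookkeeping: tracking index conventions and verifying that the order of iterated integrations matches the left-to-right operator composition $\oprod$ in \eqref{eq:phi-def}. Justifying Fubini is straightforward since the quadratic growth bounds \eqref{eq:ass:V_growth}--\eqref{eq:ass:Vbc_growth} together with \eqref{eq:ass:E_convex} dominate the joint integrand by an integrable Gaussian on $\calU_N$ whenever $\theta$ is sufficiently small.
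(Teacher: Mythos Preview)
Your proof is correct and follows essentially the same route as the paper: shift and rescale to the tilde variables, factor the Boltzmann weight into the central block $E_{N,K}$ times the two outer chains, integrate the outer variables iteratively to produce the composed operator $\varphi_{\theta,N,K}$, invoke the symmetry $V_{N,\xi}=V_{N,-\xi}$, and finally normalize. The only cosmetic difference is that the paper performs the $\sqrt{\theta}$ rescaling after writing out the iterated integral rather than simultaneously with the shift; your added remark on Fubini is a welcome clarification (and in fact holds for all $\theta>0$, not just small $\theta$, since the growth bounds \eqref{eq:ass:V_growth}--\eqref{eq:ass:Vbc_growth} give quadratic control at any fixed temperature).
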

\begin{proof}
We have
\begin{align*}
\<\mu_{\theta,N}, A\>
=~&
\frac{%
	\int_{-\infty}^{\infty} A(u^*_N + v_0) e^{-\theta^{-1} E_{\theta,N}(v)} \dv_0
	\int_{-\infty}^{\infty} \dv_1\ldots\int_{-\infty}^{\infty} \dv_{N}
	\int_{-\infty}^{\infty} \dv_{-1}\ldots\int_{-\infty}^{\infty} \dv_{-N}
}{%
	\int_{-\infty}^{\infty} e^{-\theta^{-1} E_{\theta,N}(v)} \dv_0
	\int_{-\infty}^{\infty} \dv_1\ldots\int_{-\infty}^{\infty} \dv_{N}
	\int_{-\infty}^{\infty} \dv_{-1}\ldots\int_{-\infty}^{\infty} \dv_{-N}
},
\end{align*}
where we can write
\begin{align*}
e^{-\theta^{-1} E_{\theta,N}(\sqrt{\theta}v)}
=~&
p^\bc_{\theta,N}(v_{-N})
p^\bc_{\theta,N}(v_{N})
e^{-\theta^{-1} E_{N,N}(\sqrt{\theta}v)}
\\=~&
p^\bc_{\theta,N}(v_{-N})
p^\bc_{\theta,N}(v_{N})
e^{-\theta^{-1} E_{N, K}(\sqrt{\theta}v)}
\\ &\cdot
\prod_{\xi=K+\half}^{N-\half}
p_{ \theta,N, \xi}(v_{\xi-\half},v_{\xi+\half})
\prod_{\xi=-N+\half}^{-K-\half}
p_{ \theta,N, \xi}(v_{\xi-\half},v_{\xi+\half})
.
\end{align*}

Changing the variables $v_k\mapsto \sqrt{\theta} v_k$ in the integrals and using the definition of $P_\theta$ yields
\begin{align*}
&\<\mu_{\theta,N}, A\>
\\=&
\frac{%
	\int_{\calU_{K}}
	A(u^*_N + \sqrt{\theta} v)
	e^{-\theta^{-1} E_{N,K}(\sqrt{\theta} v)}
	\oprod_{\xi=K+\half}^{N-\half}
		P_{\theta,N,\xi}[p^\bc_{\theta,N}](v_{K})
	\oprod_{\xi=-N+\half}^{-K-\half}
		P_{\theta,N,\xi}[p^\bc_{\theta,N}](-v_{K})
	\dv
}{%
	\hphantom{A(u^*_N + \sqrt{\theta} v)}
	\int_{\calU_{K}}
	e_{N,K}^{-\theta^{-1} E_{N,K}(\sqrt{\theta} v)}
	\oprod_{\xi=K+\half}^{N-\half}
		P_{\theta,N,\xi}[p^\bc_{\theta,N}](v_{K})
	\oprod_{\xi=-N+\half}^{-K-\half}
		P_{\theta,N,\xi}[p^\bc_{\theta,N}](-v_{K})
	\dv
}
.
\end{align*}
It remains to note the symmetry
\[
\varphi_{\theta,N,K} = ~~
\oprod_{\xi=K+\half}^{N-\half}
	P_{\theta,N,\xi}[p^\bc_{\theta,N}]
=
\oprod_{\xi=-N+\half}^{-K-\half}
	P_{\theta,N,\xi}[p^\bc_{\theta,N}]
\]
and that the rescaling of $\varphi_{\theta,N,K}$ by its norm does not change the expression \eqref{eq:expansion_form}.
\end{proof}

Next, we formulate a lemma that adsorbs all the technicalities of proving Theorem \ref{th:uniform_expansion}.
Below by default we let $L := 2M$.

\begin{lemma}\label{lem:uniform_est:key}
Under assumptions of Theorem \ref{th:uniform_expansion}, there exist $K_0$, $\hat{\varphi}_{N,K}^{(\ell)}$ ($\ell=0,1,\ldots,L-1$), and $\hat{\varphi}_{\theta,N,K}^{(L^+)}$ such that
\begin{align} \label{eq:phi_n_expansion}
&&\hat{\varphi}_{\theta,N,K} =~&
	\sum_{\ell=0}^{L-1} \theta^{\ell/2} \hat{\varphi}_{N,K}^{(\ell)}
	+ \theta^{L/2} \hat{\varphi}_{\theta,N,K}^{(L^+)}
,
\\ \label{eq:uniform_est:nonnegativity}
&&\hat{\varphi}_{\theta,N,K}(x) \geq~& 0
\qquad \forall x\in\bbR
\\ \label{eq:uniform_est:first_term_positivity}
&&\hat{\varphi}_{N,K}^{(0)}(x) \geq~& c^{-1}
\qquad \forall x:|x| \leq c^{-1}
\\ \label{eq:uniform_est:norm_est}
&&
\big\|\hat{\varphi}_{N,K}^{(\ell)}\big\| \leq~& c
\quad \forall\ell=0,1,\ldots,L-1
&
\big\|\hat{\varphi}_{\theta,N,K}^{(L^+)}\big\| \leq~& c
\quad \forall \theta\leq \theta_0
&&
\end{align}
with some $\theta_0>0$, $c>0$, for all $K$ and $N$ such that $K_0 \leq K \leq N$.
\end{lemma}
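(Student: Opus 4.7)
The plan is to first apply Lemma~\ref{lem:Gaussian_expansion} to each $V_{N,\xi}$ and to $V^\bc_{\theta,N}$---the hypotheses are verified by \eqref{eq:ass:V_bound}--\eqref{eq:ass:Vbc_bound} together with \eqref{eq:ass:V_growth}--\eqref{eq:ass:Vbc_growth}---to produce expansions of the kernels $p_{\theta,N,\xi}(x,y)$ and the boundary factor $p^\bc_{\theta,N}(x)$ in powers of $\sqrt{\theta}$, each coefficient being a polynomial times a Gaussian with uniform pointwise bounds of the form $e^{-(\gamma-\eps)|\cdot|^2/2}$ coming from \eqref{eq:Gaussian_expansion:terms}--\eqref{eq:Gaussian_expansion:remainder}. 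Substituting these expansions into \eqref{eq:phi-def}, collecting coefficients of $\theta^{\ell/2}$, and then normalizing by the $L^2$ norm yields a candidate for the expansion \eqref{eq:phi_n_expansion} of $\hat\varphi_{\theta,N,K}$. The terms $\hat\varphi^{(\ell)}_{N,K}$ are thus (normalized) compositions of fixed polynomial--times--Gaussian kernels, and the main remaining work is to control them uniformly in $N$.

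The central difficulty is that $\varphi_{\theta,N,K}$ is a composition of $N-K$ integral operators, so naive bounds grow in $N$. The leading-order kernel is the Gaussian $p^{(0)}_{N,\xi}(x,y) = e^{-\frac12 \<H_{N,\xi}(x,y),(x,y)\>}$, and \eqref{eq:ass:V_asymptote} gives $H_{N,\xi}\to H_\infty$ with summable tails (plus a localized correction at $\xi=N-\half$), where $H_\infty$ is the symmetric matrix with diagonal entries $1$ and off-diagonal entries $\alpha$. A direct Gaussian integral computation shows that the limiting operator $P^{(0)}_\infty$ has the explicit eigenpair $g_\kappa(x) := e^{-\kappa x^2/2}$ with simple top eigenvalue $\lambda_1 = \sqrt{2\pi/(1+\kappa)}$, and the remainder of the spectrum is strictly smaller (by Hermite-function diagonalization, using $\kappa>0$, which is where $|\alpha|<1$ enters). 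Consequently $\lambda_1^{-1} P^{(0)}_\infty$ is a strict contraction onto the one-dimensional span of $g_\kappa$ in an appropriate weighted $L^2$ norm---this is the spectral gap that makes the iteration tame.

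With this spectral gap in hand, I treat each $P^{(0)}_{N,\xi}$ as a perturbation of $P^{(0)}_\infty$ whose operator-norm deviation is summable in $\xi$, and a standard perturbed-iteration argument yields geometric convergence of the normalized leading-order iterate $\hat\varphi^{(0)}_{N,K}$ to a scalar multiple of $g_\kappa$, with uniform-in-$N$ weighted $L^2$ bounds and the pointwise lower bound \eqref{eq:uniform_est:first_term_positivity} (inherited from the Gaussian form of $g_\kappa$, with constants controlled on compact sets). Each higher-order term $\hat\varphi^{(\ell)}_{N,K}$ is a finite sum of iterates in which the correction operators $P^{(k)}_{N,\xi}$ (\mbox{$k\geq 1$}) and boundary corrections $p^{\bc,(k)}$ appear at only finitely many of the $N-K$ positions, so the same contraction estimate, applied sandwich-wise around these finitely many insertions, delivers \eqref{eq:uniform_est:norm_est}. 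The nonnegativity \eqref{eq:uniform_est:nonnegativity} is immediate because $\hat\varphi_{\theta,N,K}$ is the renormalization of a composition of strictly positive kernels applied to the strictly positive function $p^\bc_{\theta,N}$. The main obstacle throughout is the uniform-in-$N$ control of $O(N)$ operator compositions, and it is resolved by combining the spectral gap of $P^{(0)}_\infty$ with the summability of $\|H_{N,\xi}-H_\infty\|$ furnished by \eqref{eq:ass:V_asymptote}.
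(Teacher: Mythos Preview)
Your overall strategy---exploiting the spectral gap of the limiting Gaussian transfer operator $P^{(0)}_\infty$---is the right one, and your identification of $g_\kappa=e^{-\kappa x^2/2}$ as the top eigenfunction with eigenvalue $\sqrt{2\pi/(1+\kappa)}$ matches the paper's Proposition~\ref{prop:P0_explicit} and Lemma~\ref{lem:Qnorm}. The nonnegativity \eqref{eq:uniform_est:nonnegativity} and the lower bound \eqref{eq:uniform_est:first_term_positivity} are handled correctly. But your treatment of the higher-order terms $\hat\varphi^{(\ell)}_{N,K}$, $\ell\geq 1$, has a real gap.

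You expand the full product $\varphi_{\theta,N,K}=\smalloprod_\xi P_{\theta,N,\xi}[p^\bc_{\theta,N}]$ and normalize only at the end. In that direct expansion the coefficient of $\theta^{\ell/2}$ in the \emph{unnormalized} $\varphi$ is a sum over all placements of at most $\ell$ correction operators among the $N-K$ factors; the number of summands grows like $(N-K)^\ell$, so it is not a ``finite sum'' uniformly in $N$. Your sandwich argument shows each summand is $O(1)$ after dividing by $\|\varphi^{(0)}_{N,K}\|$, but it does not show the \emph{sum} is bounded. What actually makes the sum converge is that the normalization correction projects onto $g_\kappa^\perp$, and only the $g_\kappa^\perp$-component of each summand---which decays like $(\lambda_2/\lambda_1)^{\text{dist}}$ under the $P^{(0)}$-iteration---survives; you have not identified this mechanism, and for $\ell\geq 2$ the bookkeeping is delicate because the normalization factor $\|\varphi_{\theta,N,K}\|^{-1}$ itself has an expansion whose coefficients grow with $N$. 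A secondary issue: the assumptions give only $u^*\in\ell^2$, so the deviations $\|P^{(0)}_{N,\xi}-P^{(0)}_\infty\|$ are not a priori summable in $\xi$ as you claim; only eventual smallness is available.

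The paper sidesteps all of this by renormalizing at \emph{every} step: it rewrites $\hat\varphi_{\theta,N,n-1} = P_{\theta,N,n-\half}[\hat\varphi_{\theta,N,n}]\big/\big\|P_{\theta,N,n-\half}[\hat\varphi_{\theta,N,n}]\big\|$ and expands this one-step map (Lemmas~\ref{lem:varphi_tilde_expansion}--\ref{lem:elimination_operators}). The step-wise normalization inserts the projection $\Pr_{\psi_{\sigma_{N,n-1}}}$ at every iterate, so the recursion for the $\ell$-th coefficient reads
\[
\hat\varphi^{(\ell)}_{N,n-1} \;=\; \hat Q_{N,n-\half}\big[\hat\varphi^{(\ell)}_{N,n}\big] \;+\; \hat M^{(\ell)}_{N,n-\half}\big[\hat\varphi^{(0)}_{N,n},\ldots,\hat\varphi^{(\ell-1)}_{N,n}\big],
\]
with $\|\hat Q_{N,\xi}\|\leq \mu<1$ for $\xi$ large (Lemma~\ref{lem:Qnorm}) and $\hat M^{(\ell)}$ uniformly bounded. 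Induction on $\ell$ together with the elementary recursive bound of Lemma~\ref{lem:recursive_bound} then gives \eqref{eq:uniform_est:norm_est} directly, with no combinatorics over insertion positions. This step-by-step renormalization is the key device your outline is missing.
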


The entire Section \ref{sec:proof_of_main_lemma} is dedicated to the proof of this lemma which is summarized in the end of Section \ref{sec:linear_est}.

Now we are ready to prove the remaining statements of Theorem \ref{th:uniform_expansion} for the case $N<\infty$.
\begin{proof}[Proof of Theorem \ref{th:uniform_expansion}, case $N<\infty$.]
In view of Theorem \ref{th:some_expansion}, we only need to ensure that there exists a respective $N$-independent bound for \eqref{eq:uniform_expansion:bounds}.

Let $A\in \C^{L}(\calU_{K})$ for some $K$ and let $K_0$ be given by Lemma \ref{lem:uniform_est:key}.
Without loss of generality we can consider $K\geq K_0$ (since otherwise $\C^{L}(\calU_{K})\subset \C^{L}(\calU_{K_0})$).
We can then always choose $C \geq \max_{n \leq K}C'(n)$, where $C'(N)$ is the bound provided by Theorem \ref{th:some_expansion}.
Therefore we only need to consider the case $N> K$.

We are thus in case when we can apply Lemma \ref{lem:uniform_est:key}.
Its statements verify the conditions of Lemma \ref{lem:expansion_generic} applied to \eqref{eq:expansion_form} with $F_\theta(v) = \varphi_{\theta,N,K}(v_{-K}) \varphi_{\theta,N,K}(v_{K})$.
Note that $\|F_\theta\|_{L^2_{\gamma'}}$ is bounded by $\|\varphi_{\theta,N,K}\|_{L^2}$ for any $\gamma'<0$, and the quadratic growth of $E_{N,K}$ is guaranteed by Lemma \ref{lem:Enk_convex} applied with $V^\bc_{\theta,N} \equiv 0$.
The bound on $\mu_{N}^{(m)}$ and $\mu_{\theta,N}^{(M^+)}$ given by Lemma \ref{lem:expansion_generic} we denote by $C''(K)$.

We thus choose an $N$-independent bound for \eqref{eq:uniform_expansion:bounds}
\[
C(K) := \max\Big\{ \max_{n\leq K}C'(n), C''(K)\Big\}
.
\]
\end{proof}

\subsection{Proof of Lemma \ref{lem:uniform_est:key}}\label{sec:proof_of_main_lemma}

\subsubsection{Leading Order Term}\label{sec:linear_est}

We start with studying the leading order term in the expansion \eqref{eq:phi_n_expansion}, which can be obtained by formally letting $\theta\to 0$.

To that end, define $a_{N,\xi}=(V_{N,\xi})_{xx}(0,0)$, $b_{N,\xi}=(V_{N,\xi})_{xy}(0,0)$, $c_{N,\xi}=(V_{N,\xi})_{yy}(0,0)$.
Further define
\begin{align*}
p_{0,N,\xi}(x,y)
:=~&
\lim_{\theta\to 0} p_{\theta,N,\xi}(x,y)
= e^{-a_{N,\xi} x^2/2-b_{N,\xi} x y-c_{N,\xi} y^2/2}
,
\\
p^\bc_{0,N}(x)
:=~&
\lim_{\theta\to 0} p^\bc_{\theta,N}(x)
= e^{-(V_N^{\bc,(0)})_{xx}(0) x^2/2}
,
\end{align*}
and hence define $P_{0,N,\xi}$ using \eqref{eq:Ptheta-def} with $\theta=0$.

Our next goal is to get some insight on $P_{0,N,\xi}$.
This will help us in estimating $P_{\theta,N,\xi}[\varphi]$ for small $\theta$.

We start with the following proposition, which can be proved by a standard calculation.
\begin{proposition}\label{prop:P0_explicit}
For
$
\psi_{\sigma}(x) := \big(\smfrac{\sigma}{\pi}\big)^{\frac14} e^{-\frac{\sigma x^2}{2}}
$
there holds $\|\psi_{\sigma}\|=1$ and
\begin{equation}\label{eq:P0_explicit}
P_{0,N,\xi}[\psi_\sigma](x) =
\sqrt{\smfrac{2\pi}{c_{N,\xi}+\sigma}}
\, \big(\smfrac{\sigma}{\sigma'}\big)^{\frac14} \,
\psi_{\sigma'}
\end{equation}
with $\sigma'=a_{N,\xi}-\frac{b_{N,\xi}^2}{(c_{N,\xi}+\sigma)}$, provided that $c_{N,\xi}+\sigma>0$.
\qed
\end{proposition}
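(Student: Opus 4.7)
The plan is a direct Gaussian computation. First I would verify the normalization: since $\int_{-\infty}^{\infty} e^{-\sigma x^2}\,\dx = \sqrt{\pi/\sigma}$, we get $\|\psi_\sigma\|_{L^2}^2 = \sqrt{\sigma/\pi}\cdot\sqrt{\pi/\sigma} = 1$.

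For the main identity, write $a = a_{N,\xi}$, $b = b_{N,\xi}$, $c = c_{N,\xi}$ for brevity. Plug the definitions into $P_{0,N,\xi}[\psi_\sigma](x) = \int p_{0,N,\xi}(x,y)\psi_\sigma(y)\,\dy$ and pull the $x$-dependent factor $e^{-ax^2/2}$ outside the integral, obtaining
\[
P_{0,N,\xi}[\psi_\sigma](x)
=
\big(\smfrac{\sigma}{\pi}\big)^{1/4} e^{-ax^2/2}
\int_{-\infty}^{\infty} e^{-(c+\sigma)y^2/2 - bxy}\,\dy.
\]
Then I would complete the square in $y$: the exponent equals
$-\frac{c+\sigma}{2}\bigl(y + \frac{bx}{c+\sigma}\bigr)^2 + \frac{b^2 x^2}{2(c+\sigma)}$,
which is valid precisely under the hypothesis $c+\sigma>0$ (so the Gaussian integral converges). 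The remaining $y$-integral is then $\sqrt{2\pi/(c+\sigma)}$, giving
\[
P_{0,N,\xi}[\psi_\sigma](x)
=
\sqrt{\smfrac{2\pi}{c+\sigma}}\,\big(\smfrac{\sigma}{\pi}\big)^{1/4} e^{-\sigma' x^2/2},
\qquad \sigma' := a - \smfrac{b^2}{c+\sigma}.
\]

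Finally I would renormalize to the form $\psi_{\sigma'}(x) = (\sigma'/\pi)^{1/4} e^{-\sigma' x^2/2}$, absorbing the prefactor as $(\sigma/\pi)^{1/4} = (\sigma/\sigma')^{1/4}(\sigma'/\pi)^{1/4}$, which yields \eqref{eq:P0_explicit}.

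There is essentially no hard step; the only point that deserves care is the sign condition $c_{N,\xi}+\sigma>0$ that makes the Gaussian integral in $y$ convergent, and noting that the resulting $\sigma'$ may be used in subsequent iterations (as $\sigma' > 0$ must be checked separately whenever this proposition is applied—e.g., it follows from the uniform quadratic growth of $V$ at $0$). The formula will later be iterated along $\xi$ when analyzing $\varphi_{\theta,N,K}$ via repeated application of $P_{0,N,\xi}$, but that is a task for the next lemma rather than for this proposition.
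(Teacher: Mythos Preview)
Your proof is correct and is precisely the ``standard calculation'' the paper alludes to (the paper gives no proof beyond the \qed). The normalization, completing the square under the hypothesis $c_{N,\xi}+\sigma>0$, and the final rewriting via $(\sigma/\pi)^{1/4}=(\sigma/\sigma')^{1/4}(\sigma'/\pi)^{1/4}$ are all exactly as intended.
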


Proposition \ref{prop:P0_explicit} suggests that in the limit $\theta\to0$, $\varphi_{\theta,N,K}$ is a multiple of $\psi_{\sigma_{N,K}}$ for some sequence of $\sigma_{N,K}$.
This sequence is defined in the next lemma.

\begin{lemma}\label{lem:P0_recurrent}
Define
\begin{align*}
\sigma_{N,N} :=~& (V_N^{\bc,(0)})_{xx}(0),
\\
\sigma_{N,n-1} :=~& a_{N,n-\half}-\frac{b_{N,n-\half}^2}{c_{N,n-\half}+\sigma_{N,n}}
\qquad \forall n=N,N-1,\ldots,1
.
\end{align*}
Then for any $\delta>0$ one can find $K_0=K_0(\delta)$ independently of $N$ such that
\begin{align} \label{eq:sigma_est}
& 2 \geq a_{N,n-\half} \geq \sigma_{N,n} \geq
\begin{cases}
\gamma_\bc & n=N \\
\kappa^2/2 & n<N
\end{cases}
\qquad\forall n\geq K_0,
\\
\label{eq:sigma_minus_kappa}
& |\sigma_{N,n} - \kappa| \leq \delta
\qquad \forall n : K_0 \leq n \leq N-K_0
.
\end{align}
\end{lemma}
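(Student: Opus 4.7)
The core observation is that the recurrence is a scalar discrete dynamical system whose asymptotic iteration map
\[
f_\infty(\sigma) := 1 - \alpha^2/(1+\sigma)
\]
has $\kappa = \sqrt{1-\alpha^2}$ as its unique positive fixed point, with derivative $f_\infty'(\kappa) = (1-\kappa)/(1+\kappa) \in [0,1)$. The perturbed maps $f_{N,\xi}(\sigma) := a_{N,\xi} - b_{N,\xi}^2/(c_{N,\xi} + \sigma)$ converge to $f_\infty$ uniformly on compacta as $\xi, N \to \infty$ with $\xi < N - \half$, by \eqref{eq:ass:V_asymptote}. The plan is to treat the lemma as a uniform perturbation analysis of this iteration.

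First I would fix a compact interval such as $\tilde I := [\kappa^2/2,\, 3/2]$ that contains $\kappa$ in its interior, verify directly that $f_\infty(\tilde I) \subset \tilde I$ and that $f_\infty$ is a contraction on $\tilde I$ with some rate $\lambda_\star < 1$, and then use the uniform convergence to obtain a $K_1$ such that every $f_{N,\xi}$ with $K_1 \leq \xi < N - \half$ also maps $\tilde I$ into itself and contracts with a rate $\lambda_0 \in (\lambda_\star, 1)$. The outermost step must be handled separately: by \eqref{eq:ass:Vbc_bound}--\eqref{eq:ass:Vbc_growth}, $\sigma_{N,N} = (\tilde V_N^{\bc,(0)})_{xx}(0)$ lies in an $N$-uniform bounded positive interval $[\gamma_\bc,\tilde c_V]$, and a one-step calculation using the stabilising $\gamma_V x^2/4$ added to $\tilde V^\bc_{\theta,N}$ in Section \ref{sec:finite_lattice:change} shows that $c_{N,N-\half} + \sigma_{N,N}$ stays uniformly positive, so that $\sigma_{N,N-1} = f_{N,N-\half}(\sigma_{N,N}) \in \tilde I$ for $N$ large.

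Once $\sigma_{N,N-1} \in \tilde I$, I would iterate the perturbed contraction estimate
$|f_{N,\xi}(\sigma) - \kappa| \leq \lambda_0|\sigma - \kappa| + \omega_{K_1}$,
with $\omega_{K_1} := \sup_{\sigma \in \tilde I,\, \xi \geq K_1} |f_{N,\xi}(\sigma) - f_\infty(\sigma)| \to 0$, to obtain
\[
|\sigma_{N,n} - \kappa| \leq \lambda_0^{N-1-n}\,\mathrm{diam}(\tilde I) + \omega_{K_1}/(1-\lambda_0)
\]
for every $K_1 \leq n \leq N - 1$. Choosing $K_0 \geq K_1$ so that both terms are below $\delta/2$ delivers \eqref{eq:sigma_minus_kappa}. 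The remaining bounds of \eqref{eq:sigma_est} are then straightforward: $a_{N,n-\half} \leq 2$ from $a_{N,n-\half} \to 1$; $\sigma_{N,N} \geq \gamma_\bc$ directly from \eqref{eq:ass:Vbc_growth}; $\sigma_{N,n} \geq \kappa^2/2$ for $K_0 \leq n < N$ from $\sigma_{N,n} \in \tilde I$; and $a_{N,n-\half} \geq \sigma_{N,n}$ from the recurrence itself, which gives $\sigma_{N,n-1} \leq a_{N,n-\half}$ by construction, combined with the slow variation of $a_{N,\cdot}$.

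The main obstacle is the boundary-layer step at $\xi = N-\half$: because $c_{N,N-\half}$ carries the non-vanishing correction $-\gamma_V/2$ and $\sigma_{N,N}$ is fixed by the choice of boundary condition rather than by any asymptotic behaviour, this first step is a genuinely finite perturbation rather than a small one. One must use the stabilising quadratic term built into the change of variables to guarantee both that $c_{N,N-\half} + \sigma_{N,N}$ stays positive and that $\sigma_{N,N-1}$ lands inside the contraction basin $\tilde I$, uniformly over the two admissible boundary conditions considered in Section \ref{sec:defect}.
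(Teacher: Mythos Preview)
Your proposal is correct and follows essentially the same strategy as the paper's proof: both recognise the recursion as a perturbed iteration of the limiting map $f_\infty(\sigma)=1-\alpha^2/(1+\sigma)$ with attracting fixed point $\kappa$, derive a one-step contraction estimate of the form $|\sigma_{N,n-1}-\kappa|\le\lambda_0|\sigma_{N,n}-\kappa|+\text{(small)}$, and then iterate (the paper packages this iteration as Lemma~\ref{lem:recursive_bound}). The paper obtains the contraction rate by the direct identity $|f_\infty(\sigma)-\kappa|=\frac{1-\kappa}{1+\sigma}|\sigma-\kappa|\le(1-\kappa)|\sigma-\kappa|$ rather than via an abstract invariant interval, but this is a stylistic difference only.

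One remark on the boundary step you flag as the ``main obstacle'': the paper does not need a separate argument here. The change of variables in Section~\ref{sec:finite_lattice:change} was arranged precisely so that $c_{N,N-\half}\to 1-\tilde\gamma_\bc$ while $\sigma_{N,N}\ge\tilde\gamma_\bc$, hence $c_{N,N-\half}+\sigma_{N,N}\ge 1-\eps$ for large $N$, and the same one-step estimate $\sigma_{N,N-1}\ge 1-\eps-(1-\kappa^2+\eps)/(1-\eps)\ge\kappa^2/2$ applies at the boundary without modification. Your caution is well placed, but the design of $\tilde V^\bc$ already absorbs the difficulty.
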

\begin{proof} 
We have that $a_{N,\xi} \to 1$, $b_{N,\xi}^2 \to 1-\kappa^2$, and $c_{N,\xi} \to 1-\delta_{N-\xi-\half} \gamma_\bc$ as $\xi,N\to\infty$, $\xi<N-\frac12$, due to the assumption \eqref{eq:ass:V_asymptote}.
Hence we assume that for any $\eps>0$ we can choose $\tilde{K}_0=\tilde{K}_0(\eps)$ sufficiently large such that $|a_{N,\xi}-1|\leq \eps$, $|b_{N,\xi}^2-(1-\kappa^2)|\leq \eps$, and $|c_{N,\xi}-(1-\delta_{N-\xi-\half} \gamma_\bc)|\leq \eps$ for $\xi\geq \tilde{K}_0(\eps)$.

{\it Proof of \eqref{eq:sigma_est}.}
First, we note that $\sigma_{N,N}\geq \gamma_\bc$ due to \eqref{eq:ass:Vbc_growth}.
Next, for $\xi\geq \tilde{K}_0(\eps)$, we have that from definition, if $\sigma_{N,n} \geq 0$ then
\[
\sigma_{N,n-1} \geq 1-\eps - \frac{1-\kappa^2+\eps}{(1-\eps)}
= \kappa^2 + O(\eps) \geq \kappa^2/2,
\]
provided that $\eps$ is chosen small enough.
Hence choosing $K_0=\tilde{K}_0(\eps)$ proves the lower bound in \eqref{eq:sigma_est} and the upper bound on $\sigma_{N,n}$ is obvious.

{\it Proof of \eqref{eq:sigma_minus_kappa}.}
Fix $\delta>0$.
Then one can calculate for $\xi > \tilde{K}_0(\eps)$
\begin{align*}
|\sigma_{N,\xi-\half}-\kappa|
=~& \bigg|
	a_{N,\xi}-\frac{b_{N,\xi}^2}{c_{N,\xi}+\sigma_{N,\xi+\half}}-\kappa
\bigg|
~=~
\bigg|1-\kappa-\frac{1-\kappa^2}{1+\sigma_{N,\xi+\half}}\bigg| + O(\eps)
\\ = ~&
\frac{1-\kappa}{1+\sigma_{\xi+\half}} |\sigma_{N,\xi+\half}-\kappa| + O(\eps)
~\leq~
(1-\kappa)|\sigma_{N,\xi+\half}-\kappa| + O(\eps),
\end{align*}
where $O(\eps)$ indicates a bound which is independent of $N$ or $\xi$.
It is then not hard to show that hence
$
|\sigma_{N,n}-\kappa|
\leq
(1-\kappa)^{N-n} |\sigma_{N,N}-\kappa| + O(\eps)
$---this is proved in Lemma \ref{lem:recursive_bound} below.
Choosing $K_0$ large enough, so that
\[
(1-\kappa)^{N-n} |\sigma_{N,N}-\kappa| \leq (1-\kappa)^{K_0} |\sigma_{N,N}-\kappa| \leq \delta/2
\]
for $n\leq N-K_0$ and choosing $\eps$ small enough yields \eqref{eq:sigma_minus_kappa}.
\end{proof}

We have used the following lemma.
\begin{lemma}\label{lem:recursive_bound}
Let a sequence $x_{n}\in\bbR$, $n=0,1,\ldots$, be such that $x_{n+1} \leq \mu x_{n} + c$ ($n=1,2,\ldots$) with $0\leq \mu<1$.
Then
\[
x_{n} \leq x_0 \mu^n + \frac{c (1-\mu^n)}{1-\mu}
\leq \max\Big\{x_0, \frac{c}{1-\mu}\Big\}
\qquad \forall n\geq 0.
\]
\end{lemma}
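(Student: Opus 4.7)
The plan is to prove the bound by straightforward induction on $n$, exploiting the fact that a linear recursion of the form $x_{n+1}\leq \mu x_n+c$ with $\mu\in[0,1)$ admits an explicit closed-form majorant obtained by summing a geometric series. The base case $n=0$ is immediate from $\mu^0=1$ and $(1-\mu^0)/(1-\mu)=0$. For the inductive step, assuming $x_n\leq x_0\mu^n+c(1-\mu^n)/(1-\mu)$, I would apply the recursion, multiply through by $\mu$, add $c$, and combine the constant terms over the common denominator $1-\mu$ to recover the same form with $n$ replaced by $n+1$.

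For the second inequality, the key observation is that $\mu^n+(1-\mu^n)=1$ with both coefficients nonnegative (since $0\leq\mu<1$), so $x_0\mu^n+\frac{c}{1-\mu}(1-\mu^n)$ is a convex combination of $x_0$ and $c/(1-\mu)$. Any convex combination is bounded above by the maximum of the two values, which gives the desired final bound uniformly in $n$.

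There is no real obstacle here; the only small bookkeeping point is to verify that the algebraic identity
\[
\mu\cdot\frac{c(1-\mu^n)}{1-\mu}+c=\frac{c(1-\mu^{n+1})}{1-\mu}
\]
holds, which is immediate after clearing denominators. The argument uses nothing beyond the hypothesis $\mu\in[0,1)$ that $c/(1-\mu)$ is well-defined and the geometric-series coefficients are nonnegative.
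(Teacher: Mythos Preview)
Your proposal is correct and matches the paper's own proof essentially verbatim: the paper also proves the first inequality ``trivially by induction'' and obtains the second by observing that $x_0\mu^n+\frac{c(1-\mu^n)}{1-\mu}$ is a convex combination of $x_0$ and $\frac{c}{1-\mu}$.
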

\begin{proof}
The first estimate is trivially proved by induction. The second estimate follows from the fact that $x_0 \mu^n + \frac{c (1-\mu^n)}{1-\mu}$ is a convex combination of $x_0$ and $\frac{c}{1-\mu}$.
\end{proof}

We are now ready to formulate a more detailed version of Lemma \ref{lem:uniform_est:key}.

\begin{lemma}\label{lem:interation_boundness}
Let $\hat{\varphi}_{\theta,N,K}$ be defined by \eqref{eq:hatphi-def}.
There exists $\theta_0>0$ independent of $N$, such that for all $N$, $n$, $\ell=0,1,\ldots,L-1$, and $\theta\leq\theta_0$, the expansion \eqref{eq:hatphi-def} holds with
\begin{gather}
\label{eq:phi_n_expansion_term_start}
\hat{\varphi}_{N,n}^{(0)}
=
\psi_{\sigma_{N,n}}
\\ \label{eq:phi_n_expansion_term_mid}
\|\hat{\varphi}_{N,n}^{(\ell)}\|
\,\leq~
C_{\ell}
\qquad\qquad
\|\hat{\varphi}_{\theta,N,n}^{(L^+)}\|
\,\leq~
C_{L^+}
\end{gather}
for any $n=K_0,\ldots,N$, where $C_\ell$ and $C_{L^+}$ are some constants independent of $N$, $n$, or $\theta$.
\end{lemma}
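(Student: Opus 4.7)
The plan is to prove the lemma by descending induction on $n$, from $n = N$ down to $n = K_0$, exploiting the recursion
\[
\hat{\varphi}_{\theta,N,n} = \frac{P_{\theta,N,n+\half}[\hat{\varphi}_{\theta,N,n+1}]}{\|P_{\theta,N,n+\half}[\hat{\varphi}_{\theta,N,n+1}]\|_{L^2}},
\]
which follows from \eqref{eq:hatphi-def} because rescaling the input to $P_{\theta,N,n+\half}$ by a positive constant only rescales the output. Each inductive step has three ingredients: (i) Taylor-expanding the kernel $p_{\theta,N,n+\half}(x,y)$ in $\sqrt{\theta}$ around $p_{0,N,n+\half}(x,y)$, which gives $P_{\theta,N,n+\half} = \sum_{k\ge 0}\theta^{k/2} P^{(k)}_{N,n+\half}$ where each $P^{(k)}_{N,n+\half}$ has a Gaussian kernel multiplied by a polynomial of bounded degree whose coefficients are controlled by \eqref{eq:ass:V_bound}; (ii) multiplying this expansion by the inductive expansion of $\hat{\varphi}_{\theta,N,n+1}$ and collecting powers of $\sqrt{\theta}$; and (iii) renormalising in $L^2$, using that the normalisation factor itself has a $\sqrt{\theta}$-expansion with strictly positive leading term.

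The base case $n = N$ reduces to expanding $p^\bc_\theta$: assumption \eqref{eq:ass:Vbc_bound} and Lemma \ref{lem:Gaussian_expansion} yield an expansion whose normalisation gives $\hat{\varphi}^{(0)}_{N,N} = \psi_{\sigma_{N,N}}$, with higher corrections being polynomials times $\psi_{\sigma_{N,N}}$ of bounded norm. In the inductive step at leading order, Proposition \ref{prop:P0_explicit} gives $P_{0,N,n+\half}[\psi_{\sigma_{N,n+1}}] = C_n\,\psi_{\sigma_{N,n}}$ with $\sigma_{N,n}$ precisely the sequence of Lemma \ref{lem:P0_recurrent}; after $L^2$ normalisation the leading order is exactly $\psi_{\sigma_{N,n}}$, which is \eqref{eq:phi_n_expansion_term_start}.

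For $\ell \geq 1$, one shows inductively that $\hat{\varphi}^{(\ell)}_{N,n}(x) = q^{(\ell)}_{N,n}(x)\,\psi_{\sigma_{N,n}}(x)$ with $q^{(\ell)}_{N,n}$ a polynomial whose degree grows linearly in $\ell$, and whose coefficients satisfy a linear recursion in $n$ driven by Taylor coefficients of $V_{N,\xi}$ at $0$. The main obstacle, and the entire heart of the proof, is to verify that this recursion does \emph{not} amplify coefficients as $N - n$ grows. This requires a uniform spectral gap for the normalised leading-order transfer operator
\[
\tilde{P}_{0,N,n+\half}[\varphi] := P_{0,N,n+\half}[\varphi] / \|P_{0,N,n+\half}[\psi_{\sigma_{N,n+1}}]\|_{L^2},
\]
which by construction maps $\psi_{\sigma_{N,n+1}}\mapsto \psi_{\sigma_{N,n}}$ and, on the $L^2$-orthogonal complement, acts in the stationary limit $\sigma\equiv\kappa$ by multiplication by $\lambda^k$ on the $k$-th Hermite mode (where $\lambda = -(1-\kappa)/\alpha$, so $|\lambda| = \sqrt{(1-\kappa)/(1+\kappa)} < 1$). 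Lemma \ref{lem:P0_recurrent} supplies exactly what is needed to exploit this: the uniform lower bound $\sigma_{N,n} \geq \kappa^2/2$ prevents degeneration of the Gaussian weight, and the convergence $\sigma_{N,n}\to\kappa$ on the bulk $K_0\leq n\leq N-K_0$ keeps $\tilde{P}_{0,N,n+\half}$ uniformly close to the translation-invariant operator. Combining the resulting geometric contraction with a Duhamel-type summation of the $\sqrt{\theta}$-corrections yields $\|\hat{\varphi}^{(\ell)}_{N,n}\| \leq C_\ell$ uniformly in $n$ and $N$; the remainder $\hat{\varphi}^{(L^+)}_{\theta,N,n}$ is bounded by the same mechanism together with Lemma \ref{lem:Gaussian_expansion}(b) applied to the tail of the kernel expansion.
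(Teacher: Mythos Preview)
Your overall architecture matches the paper's: descending induction on $n$ via the recursion $\hat{\varphi}_{\theta,N,n-1} = P_{\theta,N,n-\half}[\hat{\varphi}_{\theta,N,n}]/\|P_{\theta,N,n-\half}[\hat{\varphi}_{\theta,N,n}]\|$, expansion of the kernel $p_{\theta,N,\xi}$ in $\sqrt\theta$, identification of the leading order via Proposition~\ref{prop:P0_explicit}, and a spectral-gap argument to prevent growth of higher corrections. The base case and the identification $\hat\varphi^{(0)}_{N,n}=\psi_{\sigma_{N,n}}$ are exactly as in the paper.

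There is, however, one genuine gap. You invoke the spectral gap of your normalised transfer operator $\tilde P_{0,N,n+\half}$ \emph{on the $L^2$-orthogonal complement of $\psi_{\sigma_{N,n}}$}, but you never explain why the recursion for $\hat\varphi^{(\ell)}_{N,n}$ with $\ell\ge 1$ actually lives in (or gets projected onto) that complement. A priori, $\tilde P_{0,N,n+\half}$ has operator norm $1$, attained on the ground state $\psi_{\sigma_{N,n+1}}$, so applying it to $\hat\varphi^{(\ell)}_{N,n+1}$ gives no contraction unless the ground-state component is controlled separately. The paper handles this by showing (Lemma~\ref{lem:elimination_operators}, using the representation \eqref{eq:inverse_expansion:repr} from Lemma~\ref{lem:inverse_expansion}) that the $L^2$-normalisation step itself produces, at order $\ell$, a term $-\nu_{N,\xi-\half}^{-2}\bigl(\tilde\varphi^{(0)}_{N,\xi-\half},\,P_{0,N,\xi}[\hat\varphi^{(\ell)}_{N,\xi+\half}]\bigr)\,\tilde\varphi^{(0)}_{N,\xi-\half}$ which combines with $\nu_{N,\xi-\half}^{-1}P_{0,N,\xi}[\hat\varphi^{(\ell)}_{N,\xi+\half}]$ to give exactly $\hat Q_{N,\xi}[\hat\varphi^{(\ell)}_{N,\xi+\half}]$ with $\hat Q_{N,\xi}={\Pr}_{\psi_{\sigma_{N,\xi-\half}}}\hat P_{0,N,\xi}$. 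Only then does Lemma~\ref{lem:Qnorm} (your Mehler/Hermite calculation in the limit $\sigma\to\kappa$) give $\|\hat Q_{N,\xi}\|\le\mu<1$, and Lemma~\ref{lem:recursive_bound} closes the induction. Your ``Duhamel-type summation'' sketch presupposes this cancellation without stating it; without it the argument does not close.

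A secondary remark: your assertion that $\hat\varphi^{(\ell)}_{N,n}$ is a polynomial times $\psi_{\sigma_{N,n}}$ with coefficients obeying a \emph{linear} recursion in $n$ is slightly off. The dependence on lower-order corrections through the $\hat M^{(\ell)}$ operators is nonlinear (it involves the expansion of $\|\tilde\varphi_{\theta}\|^{-1}$), though Lipschitz on bounded sets; the paper works in $L^2$ throughout and never needs the polynomial structure explicitly.
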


Lemma \ref{lem:interation_boundness} will be proved in Section \ref{sec:proof_of_interation_boundness}.
We now show that Lemma \ref{lem:uniform_est:key} trivially follows from Lemma \ref{lem:interation_boundness}:
\begin{proof}[Proof of Lemma \ref{lem:uniform_est:key}]
The property \eqref{eq:uniform_est:nonnegativity} follows from tracking the definition of $\varphi_{\theta,N,n}$ back to $V_{N,\xi}$ and $V_{N}^{\bc}$ and noting that $\varphi_{\theta,N,n}$ is an exponent of some function.
Then, \eqref{eq:uniform_est:norm_est} follows from \eqref{eq:phi_n_expansion_term_mid}.
Finally, \eqref{eq:uniform_est:first_term_positivity} follows from the definition of $\psi_\bullet$: indeed, for $|x| \leq c^{-1}$
\[
\hat{\varphi}_{N,n}^{(0)}(x)
=
\psi_{\sigma_{N,n}}(x)
\geq
\big(\smfrac{\sigma_{N,n}}{\pi}\big)^{\frac14}
e^{-\sigma_{N,n} \frac{c^{-2}}{2}},
\]
which is bounded uniformly in $N$ and $n$ thanks to the bounds on $\sigma_{N,n}$, \eqref{eq:sigma_est}. This concludes the proof of \eqref{eq:uniform_est:first_term_positivity}.
\end{proof}

\subsubsection{Several Auxiliary Results}

\begin{lemma} \label{lem:operator_norm_bound}
Let a measurable function $q(x,y)$ be such that
$
|q(x,y)|
\leq
c e^{-\gamma (x^2+y^2)/2}
$
for some $\gamma_1>0$ and define
\[
Q[\varphi](x)
:=
\int_{-\infty}^{\infty}
\int_{-\infty}^{\infty}
q(x,y) \varphi(y)
\dx \dy.
\]
Then $\|Q\|_{L^2} \leq C$, where $C=C(c,\gamma)$.

Furthermore, for a family of functions $q_u(x,y)$ parametrized by $u\in U$ with $U$ being an open set of some Banach space, such that $q\in \C^m(U; \C_{\gamma}(\bbR^2))$ for some $m\in\bbZ_+$, the respective operators $Q_u$ are such that $Q\in \C^m(U; B(L^2, L^2))$, where $B(\bullet, \bullet)$ stands for the space of bounded linear operators.
\end{lemma}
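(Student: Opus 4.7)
The proof is short: it reduces to the Hilbert--Schmidt bound combined with the standard functorial behaviour of Fréchet differentiation under composition with a bounded linear map. First I would establish the operator-norm bound directly. Since $Q$ is the integral operator with kernel $q$, we have
\[
\|Q\|_{L^2\to L^2}^2 \,\leq\, \|Q\|_{HS}^2 \,=\, \iint_{\bbR^2} |q(x,y)|^2 \,\dx\,\dy.
\]
Plugging in the pointwise bound on $q$ gives
\[
\|Q\|_{HS}^2 \,\leq\, c^2 \iint_{\bbR^2} e^{-\gamma(x^2+y^2)}\,\dx\,\dy \,=\, \frac{\pi c^2}{\gamma},
\]
so $\|Q\|_{L^2\to L^2} \leq c\sqrt{\pi/\gamma}$, which establishes the first claim with $C=c\sqrt{\pi/\gamma}$.

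For the regularity statement, the key observation is that the computation just performed shows that the \emph{linear} map
\[
\Phi : \C_\gamma(\bbR^2) \,\to\, B(L^2,L^2), \qquad \Phi(q) := \bigl[\varphi \mapsto {\textstyle\int q(\cdot,y)\varphi(y)\dy}\bigr],
\]
is bounded, with $\|\Phi\| \leq \sqrt{\pi/\gamma}$, because $\|q\|_{L^2(\bbR^2)}^2 \leq \|q\|_{\C_\gamma}^2 \cdot \pi/\gamma$ by the same integration. The assumption is precisely that $u \mapsto q_u$ is $\C^m$ as a map $U \to \C_\gamma(\bbR^2)$, so $u \mapsto Q_u$ is the composition $\Phi \circ (u\mapsto q_u)$ of a $\C^m$ map with a bounded linear map. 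By the chain rule this composition is $\C^m$, and its $k$-th Fréchet derivative is $\Phi$ applied to the $k$-th Fréchet derivative of $u\mapsto q_u$; that is, the operator whose kernel is $D^k_u q_u$. This yields the claim.

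I do not anticipate any substantive obstacle here. The only small point to check carefully is measurability of the kernel derivatives $D^k_u q_u$ in $(x,y)$, but this is automatic because, by hypothesis, these derivatives are continuous in $(x,y)$ (they live in $\C_\gamma(\bbR^2) \subset \C(\bbR^2)$). Everything else is a standard application of the Hilbert--Schmidt bound together with the fact that postcomposing a $\C^m$ map with a bounded linear operator preserves $\C^m$ regularity and commutes with Fréchet differentiation.
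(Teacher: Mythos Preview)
Your proof is correct, and for the first part it is essentially the paper's argument: both bound the operator norm by the Hilbert--Schmidt norm $\|q\|_{L^2(\bbR^2)}$ and then integrate the Gaussian majorant. Your presentation is in fact tidier; the paper writes an intermediate inequality that is slightly garbled, but the intended content is the same.

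For the second part the approaches differ in presentation. The paper simply says the result follows from ``a standard application of the Lebesgue dominated convergence theorem,'' leaving the reader to verify differentiability under the integral sign order by order. Your route---observing that $\Phi: q \mapsto Q$ is a bounded linear map $\C_\gamma(\bbR^2)\to B(L^2,L^2)$ and then invoking the chain rule for $\Phi\circ(u\mapsto q_u)$---is both shorter and structurally cleaner: it makes the $\C^m$ regularity a one-line consequence of the first part, with no need to revisit the integral for each derivative. Nothing is lost by this reformulation, and the explicit identification of $D^k_u Q_u$ as the operator with kernel $D^k_u q_u$ is a bonus.
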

\begin{proof}
We have
\begin{align*}
\|Q[\varphi]\|^2
\leq~&
	\int_{-\infty}^{\infty}
	\int_{-\infty}^{\infty}
	|q(x,y)|^2 |\varphi(y)|^2
	\dx \dy
\leq
	\|\varphi\|^2
	\int_{-\infty}^{\infty}
	\int_{-\infty}^{\infty}
	|q(x,y)|^2
	\dx \dy
\leq
	C \|\varphi\|^2,
\end{align*}
which proves the first part. The second part follows upon a standard application of the Lebesgue's dominated convergence theorem.
\end{proof}

\subsubsection{Proof of Lemma \ref{lem:interation_boundness}}\label{sec:proof_of_interation_boundness}

The proof of Lemma \ref{lem:interation_boundness} is conducted through induction over $n$ and is contained in the lemmas below.

Suppose we have an expansion of $\varphi_{\theta,N,n}$ and we need to expand \eqref{eq:hatphi-def}.
To do that, we need to invert expansions in $\theta$, which is done in the following lemma.

\begin{lemma}\label{lem:inverse_expansion}
Let $p\in\bbR$, $X$ be a Hilbert space, and $f:\bbR^+\to X$ have an expansion
\begin{equation}\label{eq:inverse_expansion_original}
f(\theta) = \sum_{\ell=0}^{L} \theta^{\ell/2} f^{(\ell)}
\end{equation}
for some $L\in\bbN$, and assume that $\|f(\theta)\| \ne 0$ for $\theta\in[0,\theta_0]$.
\begin{itemize}
\item[(a)]
$F(\theta) := \|f(\theta)\|^p$ has the following finite Taylor expansion for $\theta\in[0,\theta_0]$
\begin{equation}\label{eq:inverse_expansion}
F(\theta) = \sum_{\ell=0}^{L} \theta^{\ell/2} F^{(\ell)}[f^{(0)},\ldots,f^{(\ell)}] + \theta^{\frac12+L/2} F^{(1+L^+)}[f^{(0)},\ldots,f^{(L)};\sqrt{\theta}]
,\quad \forall \theta \leq \theta_0,
\end{equation}
where
\begin{align}
	F^{(0)}[f^{(0)}]
=~& \label{eq:inverse_expansion:repr0}
	\big\|f^{(0)}\big\|^p,
\\
	F^{(\ell)}[f^{(0)},\ldots,f^{(\ell)}]
=~& \label{eq:inverse_expansion:repr}
	p \big\|f^{(0)}\big\|^{p-2} \big(f^{(0)},f^{(\ell)}\big)_X
	+\tilde{F}^{(\ell)}[f^{(0)},\ldots,f^{(\ell-1)}],
	\quad
	\forall \ell=1,\ldots,L.
\end{align}

\item[(b)]
For any $c>0$ there exists $\theta_1(c)>0$ such that all $F^{(\ell)}$, $\tilde{F}^{(\ell)}$, and $F^{(1+L^+)}$ are of the class $\C^m$ for any $m\in\bbZ_+$ on the set $f^{(0)} \geq c^{-1}$, $\max_{1\leq \ell\leq L} |f^{(\ell)}| \leq c$ and $|\theta|\leq \theta_1(c)$.
\end{itemize}
\end{lemma}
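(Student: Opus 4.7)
The proof reduces to Taylor-expanding a smooth scalar function of the Hilbert-space expansion \eqref{eq:inverse_expansion_original}, so I would treat this as a bookkeeping argument with essentially no analytic obstacles beyond ensuring the radius of convergence is controlled uniformly in the parameters.

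First, I would square to eliminate the norm. Set $g(\theta):=\|f(\theta)\|_X^2=(f(\theta),f(\theta))_X$. Since $f(\theta)$ is a polynomial in $\sqrt{\theta}$ of degree $L$, bilinearity gives
\begin{equation*}
g(\theta)=\sum_{\ell=0}^{2L}\theta^{\ell/2}g^{(\ell)},\qquad
g^{(\ell)}=\sum_{\substack{k+j=\ell\\ 0\le k,j\le L}}(f^{(k)},f^{(j)})_X.
\end{equation*}
In particular $g^{(0)}=\|f^{(0)}\|^2$, which is strictly positive by the hypothesis $\|f(\theta)\|\ne 0$ on $[0,\theta_0]$ (continuity in $\theta$ gives the case $\theta=0$), and for $1\le\ell\le L$ one has $g^{(\ell)}=2(f^{(0)},f^{(\ell)})_X+\tilde g^{(\ell)}[f^{(0)},\dots,f^{(\ell-1)}]$, where the remainder $\tilde g^{(\ell)}$ is a polynomial in the lower-order expansion coefficients.

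Second, I would write $F(\theta)=h(g(\theta))$ with $h(s):=s^{p/2}$, which is $C^\infty$ on a neighbourhood of $s_0:=g^{(0)}>0$. Applying the scalar Taylor expansion of order $L$,
\begin{equation*}
h(g(\theta))=\sum_{k=0}^{L}\frac{h^{(k)}(s_0)}{k!}\bigl(g(\theta)-s_0\bigr)^k+R_L(\theta),
\end{equation*}
and noting that $g(\theta)-s_0=\sum_{\ell\ge 1}\theta^{\ell/2}g^{(\ell)}=O(\sqrt{\theta})$, the $k$-th summand contributes terms of order $\theta^{k/2}$ and higher. Collecting the terms through order $\theta^{L/2}$ produces the expansion \eqref{eq:inverse_expansion}, while the Taylor remainder (together with the cut-off collected from the higher binomial terms) is $O(\theta^{(L+1)/2})$ uniformly on the admissible set, yielding $F^{(1+L^+)}$.

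Third, I would identify the coefficient structure \eqref{eq:inverse_expansion:repr0}--\eqref{eq:inverse_expansion:repr}. The $k=0$ term gives $F^{(0)}=s_0^{p/2}=\|f^{(0)}\|^p$. For $1\le\ell\le L$, the only term linear in $f^{(\ell)}$ comes from $k=1$ paired with the $\ell$-th power of $\sqrt{\theta}$ inside $(g(\theta)-s_0)$, contributing $h'(s_0)\cdot 2(f^{(0)},f^{(\ell)})_X=p\|f^{(0)}\|^{p-2}(f^{(0)},f^{(\ell)})_X$; every remaining contribution involves only $g^{(j)}$ with $j<\ell$, hence only $f^{(0)},\dots,f^{(\ell-1)}$, and is absorbed into $\tilde F^{(\ell)}$. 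Part (b) then follows because on the set $\|f^{(0)}\|\ge c^{-1}$, $\|f^{(\ell)}\|\le c$, the map $(f^{(0)},\dots,f^{(\ell)})\mapsto g^{(\ell)}$ is a smooth polynomial; the scalar $h$ is $C^\infty$ on a neighbourhood of $[c^{-2}/2,\infty)$; and by shrinking $\theta_1(c)$ we can guarantee $g(\theta)$ stays in this neighbourhood uniformly, so $F$ and the $F^{(\ell)}$, $\tilde F^{(\ell)}$, $F^{(1+L^+)}$ inherit smoothness by the chain rule.

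The only delicate point is the uniform control of the Taylor remainder $F^{(1+L^+)}[\,\cdot\,;\sqrt{\theta}]$: one must verify that the estimate $|R_L(\theta)|\le C\theta^{(L+1)/2}$ depends only on $\theta_1$ and $c$, not on $f$ itself. This is ensured by bounding the $(L+1)$-th derivative of $h$ on a fixed interval $[c^{-2}/2,C']$ (with $C'$ depending only on $c$ and $L$) and by the polynomial bound on $g(\theta)-s_0$; this is the step requiring the most care, but it is otherwise standard.
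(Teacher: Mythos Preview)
Your proposal is correct and follows essentially the same route as the paper: first expand $\|f(\theta)\|^2$ as a polynomial in $\sqrt{\theta}$ via the inner product, then compose with $s\mapsto s^{p/2}$ and read off the coefficients, isolating the part linear in $f^{(\ell)}$ exactly as you do. The only difference is packaging: the paper invokes its general composition-of-series result (Lemma~\ref{lem:composition_expansion}, which uses analyticity of $\varphi\mapsto\varphi^{p/2}$ and Cauchy integral estimates for the remainder) rather than writing out the scalar Taylor expansion directly, but the underlying argument and the identification of the leading term $p\|f^{(0)}\|^{p-2}(f^{(0)},f^{(\ell)})_X$ are identical.
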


The proof of this lemma is quite technical, but the statement does not appear surprising.
We therefore give the proof in the Appendix \ref{sec:composition}.

The next lemma constitutes the basis of induction.
\begin{lemma}\label{lem:interation_boundness:induction_basis}
Under assumptions of Lemma \ref{lem:interation_boundness}, the expansion \eqref{eq:phi_n_expansion} and the identities
\eqref{eq:phi_n_expansion_term_start}--\eqref{eq:phi_n_expansion_term_mid} hold for $n=N$ and $\theta\in(0,\theta_0]$ for some $\theta_0>0$.
\end{lemma}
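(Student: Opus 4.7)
The key observation is that for $n = N$ the composition $\smalloprod_{\xi=N+\half}^{N-\half}$ in \eqref{eq:phi-def} is empty, so that $\varphi_{\theta,N,N}$ reduces to the single one-dimensional function $p^\bc_{\theta,N}(x) = e^{-\theta^{-1} V^\bc_{\theta,N}(\sqrt{\theta}\,x)}$. The lemma therefore splits into two standard tasks: expand this Gaussian-like integral, and invert its $L^2$-norm. Lemmas \ref{lem:Gaussian_expansion} and \ref{lem:inverse_expansion} are exactly the tools needed.

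First I would apply Lemma \ref{lem:Gaussian_expansion} in dimension $1$ with $E_\theta := V^\bc_{\theta,N}$. The hypotheses \eqref{eq:Gaussian_expansion:Eell_norm}--\eqref{eq:Gaussian_expansion:Erest_growth} follow from Lemma \ref{lem:ass}, with constants independent of $N$. A crucial feature of the change of variables performed in Section \ref{sec:finite_lattice:change} is that the subtracted constant and linear corrections force $V_N^{\bc,(m)}(0) = 0$ for every $m$ (one checks this directly by evaluating the defining formula for $\tilde V^\bc_{\theta,N}$ at $x = 0$ and matching powers of $\theta$); in particular $E^{(1)}(0) = 0$, which according to Lemma \ref{lem:Gaussian_expansion}(a) gives $Q^{(0)} \equiv 1$. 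With $H = (V_N^{\bc,(0)})_{xx}(0) = \sigma_{N,N}$, this yields the representation
\[
p^\bc_{\theta,N}(x) = \sum_{\ell=0}^{L-1} \theta^{\ell/2} Q_N^{(\ell)}(x)\, e^{-\sigma_{N,N} x^2/2} + \theta^{L/2} Q_{\theta,N}^{(L^+)}(x)\, e^{-\sigma_{N,N} x^2/2},
\]
where each factor is Gaussian-dominated uniformly in $N$ via \eqref{eq:Gaussian_expansion:terms}--\eqref{eq:Gaussian_expansion:remainder}, and in particular all terms lie in $L^2(\bbR)$ with $N$-independent bounds.

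Next I would apply Lemma \ref{lem:inverse_expansion} in the Hilbert space $X = L^2(\bbR)$ with $f(\theta) := p^\bc_{\theta,N}$, $p := -1$, and with the $f^{(\ell)}$ read off from the display above. This produces a $\sqrt{\theta}$-expansion of $\|p^\bc_{\theta,N}\|_{L^2}^{-1}$ whose leading coefficient is $\|e^{-\sigma_{N,N}\bullet^2/2}\|_{L^2}^{-1} = (\sigma_{N,N}/\pi)^{1/4}$. Multiplying the two expansions and collecting powers of $\sqrt{\theta}$ gives \eqref{eq:phi_n_expansion} for $n = N$, with leading term
\[
\hat{\varphi}_{N,N}^{(0)}(x) = (\sigma_{N,N}/\pi)^{1/4}\, e^{-\sigma_{N,N} x^2/2} = \psi_{\sigma_{N,N}}(x),
\]
which is exactly \eqref{eq:phi_n_expansion_term_start}; the $L^2$-bounds \eqref{eq:phi_n_expansion_term_mid} are inherited from the two input expansions by continuity of multiplication between a scalar and an $L^2$-function.

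The step deserving the most care is ensuring that every constant, including the threshold $\theta_0$, is genuinely independent of $N$. This reduces to two uniformity facts already available: the regularity and growth bounds on $V^\bc_{\theta,N}$ supplied by Lemma \ref{lem:ass} are $N$-uniform by construction, and Lemma \ref{lem:P0_recurrent} gives the two-sided bound $\gamma_\bc \leq \sigma_{N,N} \leq 2$ uniformly in $N$. The latter is exactly what one needs to verify the precondition of Lemma \ref{lem:inverse_expansion}(b): the upper bound on $\sigma_{N,N}$ keeps $\|f^{(0)}\|_{L^2} = (\pi/\sigma_{N,N})^{1/4}$ bounded away from zero, while the lower bound keeps the Gaussian weight $e^{-\sigma_{N,N} x^2/2}$ from becoming too flat, so that all $f^{(\ell)}$ are controlled in $L^2$ with $N$-independent constants. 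Beyond this uniformity bookkeeping, the argument is a routine composition of the two auxiliary lemmas and poses no serious obstacle.
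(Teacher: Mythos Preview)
Your proposal is correct and follows essentially the same route as the paper: apply Lemma~\ref{lem:Gaussian_expansion} to $p^\bc_{\theta,N}$, then Lemma~\ref{lem:inverse_expansion} to expand $\|p^\bc_{\theta,N}\|^{-1}$, and multiply. Two minor remarks: the observation that $V_N^{\bc,(m)}(0)=0$ (hence $Q^{(0)}\equiv 1$) is correct but unnecessary, since normalization kills any constant prefactor in $\varphi_{N,N}^{(0)}$ anyway; and the uniform upper bound on $\sigma_{N,N}$ is more safely read off from the $\C^2$ bound \eqref{eq:ass:Vbc_bound} than from \eqref{eq:sigma_est}, whose inequality $a_{N,n-\half}\geq\sigma_{N,n}$ comes from the recursion and does not literally apply at $n=N$.
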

\begin{proof}
We apply Lemma \ref{lem:Gaussian_expansion} to
\[
\varphi_{\theta,N,N} = p_{\theta,N}^\bc
=
\sum_{\ell=0}^{L-1}
\theta^{\ell/2} \varphi_{N,N}^{(\ell)}
+ \theta^{L/2} \varphi_{\theta,N,N}^{(L^+)}
,
\]
and obtain the following bounds similar to \eqref{eq:phi_n_expansion_term_mid} but on $\varphi_{N,N}^{(\ell)}$ and $\varphi_{\theta,N,N}^{(L^+)}$:
\[
\|\varphi_{N,N}^{(\ell)}\|
\leq
C_{\ell},
\qquad\qquad
\|\varphi_{\theta,N,N}^{(L^+)}\|
\leq
C_{L^+}
.
\]
Then, thanks to the following lower bound,
\[
\|\varphi_{0,N,N}\|
= \big\|e^{-\frac{\sigma_{N,N}}{2} \bullet^2}\big\|
= \big(\smfrac{\pi}{\sigma_{N,N}}\big)^{1/2}
\geq
\big(\smfrac{\pi}{\gamma_V/2}\big)^{1/2}
> 0
\]
we can apply Lemma \ref{lem:inverse_expansion} and obtain the expansion
\begin{align}
\|\varphi_{\theta,N,N}\|^{-1}
=~&
\sum_{\ell=0}^{L-1} \theta^{\ell/2} F^{(\ell)}[\varphi_{N,N}^{(0)},\ldots,\varphi_{N,N}^{(\ell)}] + \theta^{L/2} F^{(L^+)}[\varphi_{N,N}^{(0)},\ldots,\varphi_{\theta,N,N}^{(L)};\sqrt{\theta}]
\label{eq:induction_basis:inv_exp}
\end{align}
valid for $\theta\in(0,\theta_0]$ for some $\theta_0>0$ (here $F^{(L^+)} = F^{(L)} + \sqrt{\theta} F^{(1+L^+)}$).
Moreover, the continuity of functions $F^{(\bullet)}$ (recall that Lemma \ref{lem:inverse_expansion}(b) yields $F^{(\bullet)}\in \C^{m}$) translates into the boundedness of the respective terms of the expansion \eqref{eq:induction_basis:inv_exp}.
After multiplying this expansion by the expansion of $\varphi_{\theta,N,N}$, we finally obtain the expansion \eqref{eq:phi_n_expansion} and the bounds \eqref{eq:phi_n_expansion_term_mid}.
Finally, the identity \eqref{eq:phi_n_expansion_term_start} follows from multiplying the leading order terms in $\varphi_{\theta,N,N}$ and in $\|\varphi_{\theta,N,N}\|^{-1}$.
\end{proof}

In the induction step, we will use the following representation of $\hat{\varphi}_{\theta,N,n-1}$.

\begin{proposition}
Let $\tilde{\varphi}_{\theta,N,n-1} := P_{\theta,N,n-\half}[\hat{\varphi}_{\theta,N,n}]$.
Then $\hat{\varphi}_{\theta,N,n-1}$ can be represented as
\begin{equation}\label{eq:hatphi_representation}
\hat{\varphi}_{\theta,N,n-1}
=
\frac{\tilde{\varphi}_{\theta,N,n-1}}{\|\tilde{\varphi}_{\theta,N,n-1}\|}.
\end{equation}
\end{proposition}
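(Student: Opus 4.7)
The plan is to unfold the definitions \eqref{eq:hatphi-def}--\eqref{eq:phi-def} and exploit the linearity of the operator $P_{\theta,N,n-\half}$ together with the telescoping structure of the composition $\smalloprod$. Concretely, from \eqref{eq:phi-def} the unnormalized function $\varphi_{\theta,N,n-1}$ is obtained from $\varphi_{\theta,N,n}$ by applying one additional factor of the chain, namely
\[
\varphi_{\theta,N,n-1}
\;=\;
P_{\theta,N,n-\half}\!\Big[\,\oprod_{\xi=n+\half}^{N-\half} P_{\theta,N,\xi}[p^\bc_\theta]\,\Big]
\;=\;
P_{\theta,N,n-\half}[\varphi_{\theta,N,n}].
\]
This identity is the only nontrivial observation, and it follows directly from the definition of $\smalloprod$ as composition of operators.

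Next, I would use that $P_{\theta,N,n-\half}$ is a linear integral operator to factor out the positive scalar $\|\varphi_{\theta,N,n}\|$:
\[
\varphi_{\theta,N,n-1}
\;=\;
P_{\theta,N,n-\half}\!\left[\,\|\varphi_{\theta,N,n}\|\cdot \hat{\varphi}_{\theta,N,n}\,\right]
\;=\;
\|\varphi_{\theta,N,n}\|\;\tilde{\varphi}_{\theta,N,n-1},
\]
which uses only the definition $\hat{\varphi}_{\theta,N,n}=\varphi_{\theta,N,n}/\|\varphi_{\theta,N,n}\|$ and the definition of $\tilde{\varphi}_{\theta,N,n-1}$. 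Taking $L^2$ norms gives $\|\varphi_{\theta,N,n-1}\|=\|\varphi_{\theta,N,n}\|\,\|\tilde{\varphi}_{\theta,N,n-1}\|$, where the common positive factor will cancel in the subsequent step. (Implicit here is the assumption, already built into the set-up of Proposition~3 via the hypothesis $\|\varphi_{\theta,N,K}\|_{L^2}<\infty$, that the relevant norms are finite and nonzero; $\|\varphi_{\theta,N,n}\|>0$ because $\varphi_{\theta,N,n}$ is the exponential of a finite-valued integrand, and finiteness propagates from $\varphi_{\theta,N,n}$ to $\varphi_{\theta,N,n-1}$ through the Gaussian bound on $P_{\theta,N,n-\half}$.)

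Finally, dividing by $\|\varphi_{\theta,N,n-1}\|$ and cancelling the $\|\varphi_{\theta,N,n}\|$ factors yields the claim
\[
\hat{\varphi}_{\theta,N,n-1}
\;=\;
\frac{\varphi_{\theta,N,n-1}}{\|\varphi_{\theta,N,n-1}\|}
\;=\;
\frac{\|\varphi_{\theta,N,n}\|\;\tilde{\varphi}_{\theta,N,n-1}}{\|\varphi_{\theta,N,n}\|\;\|\tilde{\varphi}_{\theta,N,n-1}\|}
\;=\;
\frac{\tilde{\varphi}_{\theta,N,n-1}}{\|\tilde{\varphi}_{\theta,N,n-1}\|}.
\]
There is no real obstacle here: the proposition is essentially a bookkeeping statement asserting that the recursion satisfied by the unnormalized $\varphi_{\theta,N,n}$ descends to the normalized $\hat{\varphi}_{\theta,N,n}$, a fact that is automatic because $P_{\theta,N,n-\half}$ is linear and the normalization commutes with scalar multiplication. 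The only point requiring a brief sentence is the nonvanishing of the norms at each stage, so that the normalizations in \eqref{eq:hatphi_representation} are well defined.
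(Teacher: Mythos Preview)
Your proof is correct and follows essentially the same approach as the paper. The paper's proof is a single sentence noting that $\varphi_{\theta,N,n-1}$, $\hat{\varphi}_{\theta,N,n-1}$, and $\tilde{\varphi}_{\theta,N,n-1}$ are all equal up to a multiplicative constant, with the constant for $\hat{\varphi}_{\theta,N,n-1}$ fixed by the normalization $\|\hat{\varphi}_{\theta,N,n-1}\|=1$; you have simply written out this observation explicitly by computing the constant and cancelling it.
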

\begin{proof}
The statement is obvious upon noting that $\varphi_{\theta,N,n-1}$, $\hat{\varphi}_{\theta,N,n-1}$ and $\tilde{\varphi}_{\theta,N,n-1}$ are all equal up to a multiplicative constant, which for $\hat{\varphi}_{\theta,N,n-1}$ is uniquely determined by the identity $\|\hat{\varphi}_{\theta,N,n-1}\|=1$.
\end{proof}

It remains to perform an induction step, that is, to prove the statements \eqref{eq:phi_n_expansion}, \eqref{eq:phi_n_expansion_term_start}--\eqref{eq:phi_n_expansion_term_mid} for $n-1$ assuming these statements hold for $n$. This is done in the following lemmas.

\begin{lemma}\label{lem:varphi_tilde_expansion}
If \eqref{eq:phi_n_expansion}, \eqref{eq:phi_n_expansion_term_start}--\eqref{eq:phi_n_expansion_term_mid} hold for $\xi+\smfrac12$ then there holds an expansion
\begin{equation}\label{eq:varphi_tilde_expansion_1}
\tilde{\varphi}_{\theta,N,\xi-\half}
=
\sum_{\ell=0}^{L-1}
\theta^{\ell/2} \tilde{\varphi}_{N,\xi-\half}^{(\ell)}
+ \theta^{L/2} \tilde{\varphi}_{\theta,N,\xi-\half}^{(L^+)}
,
\end{equation}
where
\begin{align}\label{eq:varphi_tilde_expansion_2}
\tilde{\varphi}_{N,\xi-\half}^{(\ell)}
=~&
P_{0,N,\xi} [\hat{\varphi}_{N,\xi+\half}^{(\ell)}] +
M_{N,\xi}^{(\ell)}[\hat{\varphi}_{N,\xi+\half}^{(0)},\ldots,\hat{\varphi}_{N,\xi+\half}^{(\ell-1)}]
\\ \notag
\tilde{\varphi}_{\theta,N,\xi-\half}^{(L^+)}
=~&
P_{0,N,\xi} [\hat{\varphi}_{\theta,N,\xi+\half}^{(L^+)}] +
M_{\theta,N,\xi}^{(L^+)}[\hat{\varphi}_{N,\xi+\half}^{(0)},\ldots,\hat{\varphi}_{N,\xi+\half}^{(L-1)}]
\\~&\hphantom{P_{0,N,\xi} [\hat{\varphi}_{\theta,N,\xi+\half}^{(L^+)}]}
+ \theta^{1/2} M_{\theta,N,\xi}^{(L^{++})}[\hat{\varphi}_{N,\xi+\half}^{(0)},\ldots,\hat{\varphi}_{N,\xi+\half}^{(L-1)},\hat{\varphi}_{\theta,N,\xi+\half}^{(L^+)}],
\label{eq:varphi_tilde_expansion_3}
\end{align}
where $M_{N,\xi}^{(\ell)}$, $M_{\theta,N,\xi}^{(L^+)}$, and $M_{\theta,N,\xi}^{(L^{++})}$ are some linear operators bounded uniformly in $N$ and $\theta$.
\end{lemma}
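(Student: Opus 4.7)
The plan is to combine the Gaussian expansion Lemma \ref{lem:Gaussian_expansion} applied to the kernel $p_{\theta,N,\xi}(x,y) = e^{-\theta^{-1} V_{N,\xi}(\sqrt\theta x,\sqrt\theta y)}$ with the hypothesized expansion of $\hat{\varphi}_{\theta,N,\xi+\half}$, then multiply out and collect by powers of $\theta^{1/2}$. Concretely, the regularity and growth properties \eqref{eq:ass:V_bound}, \eqref{eq:ass:V_zero_order}, and \eqref{eq:ass:V_growth} give, uniformly in $N$ and $\xi$, an expansion
\[
p_{\theta,N,\xi}(x,y) = \sum_{k=0}^{L-1}\theta^{k/2}\,q_{N,\xi}^{(k)}(x,y)\, p_{0,N,\xi}(x,y) + \theta^{L/2}\,q_{\theta,N,\xi}^{(L^+)}(x,y)\, p_{0,N,\xi}(x,y),
\]
with each factor $q_{N,\xi}^{(k)}$, $q_{\theta,N,\xi}^{(L^+)}$ a polynomial (respectively, a measurable function) in $(x,y)$ whose coefficients are built from $\del^j V_{N,\xi}(0,0)$; by \eqref{eq:Gaussian_expansion:terms}--\eqref{eq:Gaussian_expansion:remainder} the products $q^{(k)} p_{0,N,\xi}$ and $q_{\theta}^{(L^+)}p_{0,N,\xi}$ all admit a uniform (in $N,\xi,\theta\le\theta_0$) pointwise bound of the form $C_\eps\, e^{-(\gamma_V/2-\eps)(x^2+y^2)/2}$.

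Second, I substitute this expansion of $p_{\theta,N,\xi}$ together with the inductive expansion \eqref{eq:phi_n_expansion} of $\hat{\varphi}_{\theta,N,\xi+\half}$ into
\[
\tilde{\varphi}_{\theta,N,\xi-\half}(x) = \int_{\bbR} p_{\theta,N,\xi}(x,y)\,\hat{\varphi}_{\theta,N,\xi+\half}(y)\,\dy,
\]
and expand the product. Denoting
\[
Q_{N,\xi}^{(k)}[\varphi](x) := \int_{\bbR} q_{N,\xi}^{(k)}(x,y)\, p_{0,N,\xi}(x,y)\,\varphi(y)\,\dy,
\]
so that in particular $Q_{N,\xi}^{(0)} = P_{0,N,\xi}$, the coefficient of $\theta^{\ell/2}$ for $0\le\ell\le L-1$ is
\[
\tilde{\varphi}_{N,\xi-\half}^{(\ell)} = \sum_{k+j=\ell} Q_{N,\xi}^{(k)}\big[\hat{\varphi}_{N,\xi+\half}^{(j)}\big] = P_{0,N,\xi}\big[\hat{\varphi}_{N,\xi+\half}^{(\ell)}\big] + \sum_{k=1}^{\ell} Q_{N,\xi}^{(k)}\big[\hat{\varphi}_{N,\xi+\half}^{(\ell-k)}\big],
\]
which yields \eqref{eq:varphi_tilde_expansion_2} with $M_{N,\xi}^{(\ell)} := \sum_{k=1}^{\ell} Q_{N,\xi}^{(k)}$ (viewed as a linear map of the lower-order data). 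Grouping the leftover terms of total order $\ge L/2$ gives the expressions \eqref{eq:varphi_tilde_expansion_3} for the remainder, with $M_{\theta,N,\xi}^{(L^+)}$ collecting all monomials $Q_{N,\xi}^{(k)}[\hat{\varphi}_{N,\xi+\half}^{(L-k)}]$ with $1\le k\le L$, and $M_{\theta,N,\xi}^{(L^{++})}$ collecting the terms that involve the remainder $\hat{\varphi}_{\theta,N,\xi+\half}^{(L^+)}$ or $q_{\theta,N,\xi}^{(L^+)}$ and carry an extra half-power of $\theta$.

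Finally, I check the uniform operator bounds. By the kernel decay above, each $q_{N,\xi}^{(k)}(x,y)\,p_{0,N,\xi}(x,y)$ and $q_{\theta,N,\xi}^{(L^+)}(x,y)\,p_{0,N,\xi}(x,y)$ satisfies the hypothesis of Lemma \ref{lem:operator_norm_bound} with constants independent of $N,\xi,\theta$, so $\|Q_{N,\xi}^{(k)}\|_{L^2\to L^2}\le C$ uniformly; combined with \eqref{eq:phi_n_expansion_term_mid} for the input, this yields uniform $L^2$ bounds on $M_{N,\xi}^{(\ell)}$, $M_{\theta,N,\xi}^{(L^+)}$, $M_{\theta,N,\xi}^{(L^{++})}$. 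The main obstacle is the bookkeeping: carefully assembling the cross terms so that exactly the right pieces fall into the $\theta^{L/2}$ and $\theta^{(L+1)/2}$ buckets, and verifying that the uniformity of the constants in Lemma \ref{lem:Gaussian_expansion}(b) survives the passage through Lemma \ref{lem:operator_norm_bound} as $\xi,N$ vary over the range $K_0\le\xi\le N$, which is secured by the asymptotic behavior \eqref{eq:ass:V_asymptote} and the uniform regularity bound \eqref{eq:ass:V_bound}.
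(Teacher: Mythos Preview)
Your proposal is correct and follows essentially the same route as the paper: expand the kernel $p_{\theta,N,\xi}$ via Lemma~\ref{lem:Gaussian_expansion}, turn the resulting terms into integral operators whose $L^2$-boundedness is supplied by Lemma~\ref{lem:operator_norm_bound}, and then multiply this operator expansion against the inductive expansion \eqref{eq:phi_n_expansion} of $\hat{\varphi}_{\theta,N,\xi+\half}$, collecting powers of $\theta^{1/2}$. The only differences are cosmetic---the paper writes $P_{N,\xi}^{(k)}$ where you write $Q_{N,\xi}^{(k)}$ and carries out the cross-term bookkeeping explicitly---and your closing remark about \eqref{eq:ass:V_asymptote} is not actually needed here (uniformity comes from \eqref{eq:ass:V_bound} and \eqref{eq:ass:V_growth} alone; the asymptotics enter later in Lemma~\ref{lem:Qnorm}).
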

\begin{proof}
Thanks to the assumptions \eqref{eq:ass:V_bound}, \eqref{eq:ass:V_zero_order} and \eqref{eq:ass:V_growth}, and Lemma \ref{lem:Gaussian_expansion}, we have the following expansion
\begin{align*}
p_{\theta,N,\xi}(x,y)/p_{0,N,\xi}(x,y)
=~&
	e^{-\theta^{-1} V_{N,\xi}(\sqrt{\theta}x,\sqrt{\theta}y)}
	e^{(\frac12 a_{N,\xi} x^2 + b_{N,\xi} x y + \frac12 c_{N,\xi} y^2)}
\\ =~&
	\sum_{\ell=0}^{L-1} \theta^{\ell/2} q_{N,\xi}^{(\ell)}(x,y)
	+ \theta^{L/2} q_{\theta,N,\xi}^{(L^+)}(x,y)
\end{align*}
where $q_{N,\xi}^{(0)}(x,y) = 1$, $q_{N,\xi}^{(\ell)}(x,y)$ is a polynomial of $x$ and $y$ and
\[
\big|
	q_{N,\xi}^{(\ell)}(x,y) p_{0,N,\xi}(x,y)
\big|
\leq C e^{-\frac{\gamma'}{2} (x^2+y^2)}
,
\qquad
\big|
	q_{\theta,N,\xi}^{(L^+)}(x,y) p_{0,N,\xi}(x,y)
\big|
\leq C e^{-\frac{\gamma'}{2} (x^2+y^2)}
,
\]
where $\gamma':=\gamma_V/2$ and $C$ depends only on $L$, $\gamma_V$, and $c_V$.

Hence we have
\[
	P_{\theta,N,\xi}
=
	\sum_{\ell=0}^{L-1} \theta^{\ell/2} P_{N,\xi}^{(\ell)}
	+
	\theta^{L/2} P_{\theta,N,\xi}^{(L^+)},
\]
where
\begin{align*}
P_{N,\xi}^{(\ell)}[\varphi](x)
:=~&
\int_\bbR q_{N,\xi}^{(\ell)}(x,y) p_{0,N,\xi}(x,y) \varphi(y) \dy
,\qquad\text{and}
\\
P_{\theta,N,\xi}^{(L^+)}[\varphi](x)
:=~&
\int_\bbR q_{\theta,N,\xi}^{(L^+)}(x,y) p_{0,N,\xi}(x,y) \varphi(y) \dy.
\end{align*}
Notice that the first term in the operator sum is $P_{N,\xi}^{(0)} = P_{0,N,\xi}$.
Finally, according to Lemma \ref{lem:operator_norm_bound}, all operators have a uniform bound in $N$ and $\theta$.

Now, assume that the representation \eqref{eq:phi_n_expansion}, \eqref{eq:phi_n_expansion_term_start}--\eqref{eq:phi_n_expansion_term_mid} holds for $n=\xi+\frac12$ and hence expand
\begin{align*} \notag
\tilde{\varphi}_{\theta,N,\xi-\half}
=~&
P_{\theta,N,\xi}[\hat{\varphi}_{\theta,N,\xi+\half}]
\\ = ~&
\bigg(P_{0,N,\xi} + \sum_{\ell=1}^{L-1} \theta^{\ell/2} P_{N,\xi}^{(\ell)}
	+ \theta^{L/2} P_{\theta,N,\xi}^{(L^+)}
\bigg)
\bigg[\sum_{\ell=0}^{L-1}
\theta^{\ell/2} \hat{\varphi}_{N,\xi+\half}^{(\ell)}
+ \theta^{L/2} \hat{\varphi}_{\theta,N,\xi+\half}^{(L^+)}
\bigg]
\\ = ~&
\sum_{\ell=0}^{L-1} \theta^{\ell/2}
\bigg(P_{0,N,\xi} [\hat{\varphi}_{N,\xi+\half}^{(\ell)}] +
\underbrace{\sum_{m=0}^{\ell-1}
P_{N,\xi}^{(\ell-m)} [\hat{\varphi}_{N,\xi+\half}^{(m)}]}_{=: M_{N,\xi}^{(\ell)}[\hat{\varphi}_{N,\xi+\half}^{(0)},\ldots,\hat{\varphi}_{N,\xi+\half}^{(\ell-1)}]}
\bigg)
\\~&+
\theta^{L/2} \bigg(
	P_{0,N,\xi} [\hat{\varphi}_{\theta,N,\xi+\half}^{(L^+)}]
	+ \underbrace{\sum_{m=1}^{L-1}
	P_{N,\xi}^{(L-m)} [\hat{\varphi}_{N,\xi+\half}^{(m)}]
	+
	P_{\theta, N,\xi}^{(L^+)} [\hat{\varphi}_{N,\xi+\half}^{(0)}]}_{=:M_{\theta,N,\xi}^{(L^+)}[\hat{\varphi}_{N,\xi+\half}^{(0)},\ldots,\hat{\varphi}_{N,\xi+\half}^{(L-1)}]}
	\bigg)
\\ ~&+
\theta^{(L+1)/2} \bigg(~
	\sum_{\ell=1}^{L-1} \sum_{m=L+1-\ell}^{L-1}
	\theta^{(\ell+m-L-1)/2} P_{N,\xi}^{(\ell)} [\hat{\varphi}_{N,\xi+\half}^{(m)}]
\\ ~& \hphantom{+\theta^{(L+1)/2} \bigg(~} +
	\sum_{m=1}^{L-1}
	\theta^{(m-1)/2} P_{\theta,N,\xi}^{(L^+)} [\hat{\varphi}_{N,\xi+\half}^{(m)}]
\\ ~& \hphantom{+\theta^{(L+1)/2} \bigg(~} \underbrace{+
	\sum_{\ell=1}^{L-1}
	\theta^{(\ell-1)/2} P_{N,\xi}^{(\ell)} [\hat{\varphi}_{\theta,N,\xi+\half}^{(L^+)}]
+
	\theta^{(L-1)/2} P_{\theta,N,\xi}^{(L^+)} [\hat{\varphi}_{\theta,N,\xi+\half}^{(L^+)}]}_{=:M_{\theta,N,\xi}^{(L^{++})}[\hat{\varphi}_{N,\xi+\half}^{(0)},\ldots,\hat{\varphi}_{N,\xi+\half}^{(L-1)},\hat{\varphi}_{\theta,N,\xi+\half}^{(L^+)}]}
\bigg)
.
\end{align*}

The uniform boundedness of the $M$ operators follows from that of $P$ operators.

\end{proof}

Next, to apply Lemma \ref{lem:inverse_expansion} to the expansion of $\|\tilde{\varphi}_{\theta,N,\xi-\half}\|^{-1}$ in \eqref{eq:hatphi_representation}, we need to check the following condition on its denominator.
\begin{lemma}\label{lem:nv_xi_positivity}
The denominator of \eqref{eq:hatphi_representation} at $\theta=0$,
$
\nu_{N,n}
:=
\|\tilde{\varphi}_{N,n}^{(0)}\|
,
$
is uniformly positive with respect to $N$ and $n$.
\end{lemma}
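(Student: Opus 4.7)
\textbf{Proof plan for Lemma \ref{lem:nv_xi_positivity}.} The plan is to use the expansion in Lemma \ref{lem:varphi_tilde_expansion} to obtain an explicit formula for $\tilde{\varphi}_{N,n}^{(0)}$ and then read off a uniform lower bound on its $L^2$ norm from Proposition \ref{prop:P0_explicit} combined with the estimates of Lemma \ref{lem:P0_recurrent}.

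First, I would specialize \eqref{eq:varphi_tilde_expansion_2} to $\ell = 0$. Since the sum defining $M^{(0)}_{N,\xi}$ is empty, this gives the clean identity
\begin{equation*}
\tilde{\varphi}_{N,n-1}^{(0)} = P_{0,N,n-\half}\bigl[\hat{\varphi}_{N,n}^{(0)}\bigr]
= P_{0,N,n-\half}[\psi_{\sigma_{N,n}}],
\end{equation*}
where the last equality uses \eqref{eq:phi_n_expansion_term_start}. Applying Proposition \ref{prop:P0_explicit} with $\sigma = \sigma_{N,n}$ and noting that the resulting $\sigma'$ is exactly $\sigma_{N,n-1}$ by the recursion in Lemma \ref{lem:P0_recurrent}, I get
\begin{equation*}
\tilde{\varphi}_{N,n-1}^{(0)} = \sqrt{\frac{2\pi}{c_{N,n-\half}+\sigma_{N,n}}} \left(\frac{\sigma_{N,n}}{\sigma_{N,n-1}}\right)^{\!1/4} \psi_{\sigma_{N,n-1}}.
\end{equation*}

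Since $\|\psi_{\sigma}\| = 1$ for every $\sigma>0$, taking the $L^2$ norm yields
\begin{equation*}
\nu_{N,n-1} = \sqrt{\frac{2\pi}{c_{N,n-\half}+\sigma_{N,n}}} \left(\frac{\sigma_{N,n}}{\sigma_{N,n-1}}\right)^{\!1/4}.
\end{equation*}
To conclude uniform positivity it suffices to bound the denominator inside the square root from above and the ratio $\sigma_{N,n}/\sigma_{N,n-1}$ from below, both uniformly in $N$ and $n$ (for $n \geq K_0+1$, which is consistent with the induction range in Lemma \ref{lem:interation_boundness}). The regularity assumption \eqref{eq:ass:V_bound} gives $|c_{N,n-\half}| = |(V_{N,n-\half})_{yy}(0,0)| \leq \tilde{c}_V$, and \eqref{eq:sigma_est} gives $\sigma_{N,n} \leq a_{N,n-\half} \leq 2$, so $c_{N,n-\half} + \sigma_{N,n}$ is bounded above. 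Similarly, \eqref{eq:sigma_est} provides $\sigma_{N,n} \geq \min\{\gamma_\bc, \kappa^2/2\}$ while $\sigma_{N,n-1} \leq 2$, so the ratio has a uniform positive lower bound. Combining gives $\nu_{N,n-1} \geq c_*$ for some $c_*>0$ depending only on $\tilde{c}_V$, $\gamma_\bc$, and $\kappa$.

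I do not expect a real obstacle here: all the hard work has been done in Lemma \ref{lem:P0_recurrent} (uniform two-sided bounds on $\sigma_{N,n}$) and Proposition \ref{prop:P0_explicit} (which shows that $P_{0,N,\xi}$ preserves the Gaussian family $\psi_\sigma$ with explicit scaling). The only thing to watch is that the identity $\tilde{\varphi}_{N,n-1}^{(0)} = P_{0,N,n-\half}[\hat{\varphi}_{N,n}^{(0)}]$ requires the induction hypothesis that the expansion \eqref{eq:phi_n_expansion} together with \eqref{eq:phi_n_expansion_term_start} is already established at level $n$, which is indeed the setting in which this lemma is invoked.
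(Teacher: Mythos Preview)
Your proposal is correct and follows essentially the same approach as the paper: derive the explicit formula $\nu_{N,n-1} = \sqrt{2\pi/(c_{N,n-\half}+\sigma_{N,n})}\,(\sigma_{N,n}/\sigma_{N,n-1})^{1/4}$ via Proposition~\ref{prop:P0_explicit} (using the induction hypothesis $\hat{\varphi}_{N,n}^{(0)}=\psi_{\sigma_{N,n}}$), then invoke the uniform two-sided bounds on $\sigma_{N,\bullet}$ from \eqref{eq:sigma_est}. The paper's proof is terser but identical in substance; your extra care in flagging the reliance on the induction hypothesis and in spelling out the individual bounds is welcome but not a different route.
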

\begin{proof}
Proposition \ref{prop:P0_explicit} immediately yields that $\nu_{N,n} = \sqrt{\smfrac{2\pi}{c_{N,n+\half}+\sigma_{N,n+1}}}
\, \big(\smfrac{\sigma_{N,n+1}}{\sigma_{N,n}}\big)^{\frac14}$, which is uniformly positive thanks to \eqref{eq:sigma_est}.
\end{proof}

\begin{lemma}\label{lem:elimination_operators}
If \eqref{eq:phi_n_expansion}, \eqref{eq:phi_n_expansion_term_start}--\eqref{eq:phi_n_expansion_term_mid} hold for $n=\xi+\half$ then \eqref{eq:phi_n_expansion} holds for $n=\xi-\half$ where
\begin{align}\label{eq:varphi_hat_expansion_1}
\hat{\varphi}_{N,\xi-\half}^{(0)}
=~&
\hat{P}_{0,N,\xi} [\hat{\varphi}_{N,\xi+\half}^{(0)}]
\\ \label{eq:varphi_hat_expansion_2}
\hat{\varphi}_{N,\xi-\half}^{(\ell)}
=~&
\hat{Q}_{N,\xi} [\hat{\varphi}_{N,\xi+\half}^{(\ell)}] +
\hat{M}_{N,\xi}^{(\ell)}[\hat{\varphi}_{N,\xi+\half}^{(0)},\ldots,\hat{\varphi}_{N,\xi+\half}^{(\ell-1)}]
\\ \notag
\hat{\varphi}_{\theta,N,\xi-\half}^{(L^+)}
=~&
\hat{Q}_{N,\xi} [\hat{\varphi}_{\theta,N,\xi+\half}^{(L^+)}] +
\hat{M}_{\theta,N,\xi}^{(L^+)}[\hat{\varphi}_{N,\xi+\half}^{(0)},\ldots,\hat{\varphi}_{N,\xi+\half}^{(L-1)}]
\\~& \hphantom{\smfrac1{\nu_{N,\xi-\half}} P_{0,N,\xi} [\hat{\varphi}_{\theta,N,\xi+\half}^{(L^+)}]}
+ \theta^{1/2} \hat{M}_{\theta,N,\xi}^{(L^{++})}[\hat{\varphi}_{N,\xi+\half}^{(0)},\ldots,\hat{\varphi}_{N,\xi+\half}^{(L-1)},\hat{\varphi}_{\theta,N,\xi+\half}^{(L^+)}],
\label{eq:varphi_hat_expansion_3}
\end{align}
where $\hat{P}_{0,N,\xi} = \smfrac1{\nu_{N,\xi-\half}} P_{0,N,\xi}$, $\hat{Q}_{N,\xi} = {\Pr}_{\hat{\varphi}^{(0)}_{N,\xi-\half}} \hat{P}_{0,N,\xi}$, $\Pr_\psi[\varphi] := \varphi - \psi\frac{(\psi,\varphi)_{L^2}}{(\psi,\psi)_{L^2}}$ is a projection operator parallel to $\psi$,
and $\hat{M}_{N,\xi}^{(\ell)}$, $\hat{M}_{\theta,N,\xi}^{(L^+)}$, and $\hat{M}_{\theta,N,\xi}^{(L^{++})}$ are some operators acting on $L^2$.
Furthermore, for any $c>0$ there exists $\theta_0=\theta_0(c)$ such that these operators are Lipschitz uniformly in $N$, $\xi$, and $\theta\in(0,\theta_0]$ on the set $\big\|\hat{\varphi}_{N,\xi+\half}^{(\ell)}\big\| \leq c$ ($\ell=0,\ldots,L-1$), $\big\|\hat{\varphi}_{\theta,N,\xi+\half}^{(L^+)}\big\| \leq c$.
\end{lemma}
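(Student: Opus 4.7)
The plan is to derive \eqref{eq:varphi_hat_expansion_1}--\eqref{eq:varphi_hat_expansion_3} by combining the $\tilde{\varphi}$-expansion from Lemma \ref{lem:varphi_tilde_expansion} with an expansion of the normalizer $\|\tilde{\varphi}_{\theta,N,\xi-\half}\|^{-1}$ obtained via Lemma \ref{lem:inverse_expansion}, then reading off the projection structure from the formula for the Taylor coefficients of a negative power of a norm.

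First, I apply Lemma \ref{lem:inverse_expansion} with $p=-1$ and $f(\theta) = \tilde{\varphi}_{\theta,N,\xi-\half}$. The hypothesis $\|f(0)\|\ne 0$ is supplied uniformly by Lemma \ref{lem:nv_xi_positivity}, which provides a positive lower bound on $\|\tilde{\varphi}_{N,\xi-\half}^{(0)}\| = \nu_{N,\xi-\half}$ independent of $N$ and $\xi$. The assumed bound $\|\hat{\varphi}_{N,\xi+\half}^{(\ell)}\|\le c$, together with the uniform operator bounds on the $P_{N,\xi}^{(\ell)}$ from Lemma \ref{lem:varphi_tilde_expansion} (inherited from Lemma \ref{lem:operator_norm_bound}), yields a uniform bound on every $\tilde{\varphi}$-coefficient, so Lemma \ref{lem:inverse_expansion}(b) activates and produces, for some $\theta_0 = \theta_0(c)$, an expansion
\[
\|\tilde{\varphi}_{\theta,N,\xi-\half}\|^{-1} = \sum_{\ell=0}^{L-1}\theta^{\ell/2}\,G_{N,\xi}^{(\ell)} + \theta^{L/2}\,G_{\theta,N,\xi}^{(L^+)},
\]
with $G_{N,\xi}^{(0)} = \nu_{N,\xi-\half}^{-1}$ and with every $G_{N,\xi}^{(\ell)}$, $G_{\theta,N,\xi}^{(L^+)}$ depending smoothly, hence Lipschitz-continuously, on the $\tilde{\varphi}$-coefficients, uniformly in $N$, $\xi$, and $\theta\in(0,\theta_0]$.

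Next, multiply this series by the expansion \eqref{eq:varphi_tilde_expansion_1} of $\tilde{\varphi}_{\theta,N,\xi-\half}$ and collect powers of $\sqrt{\theta}$. The $\ell=0$ coefficient is
\[
\hat{\varphi}_{N,\xi-\half}^{(0)} = \nu_{N,\xi-\half}^{-1}\tilde{\varphi}_{N,\xi-\half}^{(0)} = \nu_{N,\xi-\half}^{-1}P_{0,N,\xi}[\hat{\varphi}_{N,\xi+\half}^{(0)}] = \hat{P}_{0,N,\xi}[\hat{\varphi}_{N,\xi+\half}^{(0)}],
\]
giving \eqref{eq:varphi_hat_expansion_1}. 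For $\ell\ge 1$, formula \eqref{eq:inverse_expansion:repr} specialized to $p=-1$ reads
\[
G_{N,\xi}^{(\ell)} = -\nu_{N,\xi-\half}^{-3}\bigl(\tilde{\varphi}_{N,\xi-\half}^{(0)},\tilde{\varphi}_{N,\xi-\half}^{(\ell)}\bigr)_{L^2} + \tilde{G}_{N,\xi}^{(\ell)}\bigl[\tilde{\varphi}_{N,\xi-\half}^{(0)},\ldots,\tilde{\varphi}_{N,\xi-\half}^{(\ell-1)}\bigr],
\]
so the coefficient of $\theta^{\ell/2}$ in $\hat{\varphi}_{\theta,N,\xi-\half}$ equals
\begin{align*}
\hat{\varphi}_{N,\xi-\half}^{(\ell)}
&= G_{N,\xi}^{(0)}\tilde{\varphi}_{N,\xi-\half}^{(\ell)} + G_{N,\xi}^{(\ell)}\tilde{\varphi}_{N,\xi-\half}^{(0)} + \bigl(\text{terms in }\tilde{\varphi}^{(0)},\ldots,\tilde{\varphi}^{(\ell-1)}\bigr) \\
&= \nu_{N,\xi-\half}^{-1}\,{\Pr}_{\tilde{\varphi}_{N,\xi-\half}^{(0)}}\!\bigl[\tilde{\varphi}_{N,\xi-\half}^{(\ell)}\bigr] + \bigl(\text{lower-order cross terms}\bigr).
\end{align*}
Scaling $\tilde{\varphi}^{(0)}$ by a positive constant does not affect the direction projected out, so the displayed projection equals ${\Pr}_{\hat{\varphi}_{N,\xi-\half}^{(0)}}$. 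Substituting \eqref{eq:varphi_tilde_expansion_2} for $\tilde{\varphi}^{(\ell)}$ isolates the piece linear in $\hat{\varphi}_{N,\xi+\half}^{(\ell)}$ as $\hat{Q}_{N,\xi}[\hat{\varphi}_{N,\xi+\half}^{(\ell)}]$ with $\hat{Q}_{N,\xi} = {\Pr}_{\hat{\varphi}_{N,\xi-\half}^{(0)}}\hat{P}_{0,N,\xi}$; the remaining contributions — the image of $M_{N,\xi}^{(\ell)}$ under the projection together with the lower-order cross terms — collect into $\hat{M}_{N,\xi}^{(\ell)}$. The $L^+$-expansion \eqref{eq:varphi_hat_expansion_3} is obtained identically, the extra $\theta^{1/2}M_{\theta,N,\xi}^{(L^{++})}$-tail of \eqref{eq:varphi_tilde_expansion_3} providing the corresponding $\theta^{1/2}\hat{M}_{\theta,N,\xi}^{(L^{++})}$ piece.

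Uniform Lipschitz continuity of the $\hat{M}$-operators then follows by composition: $\hat{P}_{0,N,\xi}$ is a uniformly bounded operator by Lemma \ref{lem:operator_norm_bound} combined with the uniform lower bound on $\nu_{N,\xi-\half}$; the $M$-operators of Lemma \ref{lem:varphi_tilde_expansion} are already uniformly bounded linear maps; and Lemma \ref{lem:inverse_expansion}(b) grants the required $\C^m$ — and in particular Lipschitz — dependence of $G_{N,\xi}^{(\ell)}$ and $G_{\theta,N,\xi}^{(L^+)}$ on the $\tilde{\varphi}$-coefficients on the prescribed bounded set. The main obstacle is entirely organizational: recognizing that the two contributions $G^{(0)}\tilde{\varphi}^{(\ell)}$ and $G^{(\ell)}\tilde{\varphi}^{(0)}$ combine precisely into ${\Pr}_{\hat{\varphi}_{N,\xi-\half}^{(0)}}\hat{P}_{0,N,\xi}[\hat{\varphi}_{N,\xi+\half}^{(\ell)}]$, while every other term contains at most $\hat{\varphi}_{N,\xi+\half}^{(m)}$ with $m\le\ell-1$ and is thereby legitimately absorbed into $\hat{M}_{N,\xi}^{(\ell)}$.
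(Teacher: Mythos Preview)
Your argument is correct and follows essentially the same route as the paper: apply Lemma \ref{lem:inverse_expansion} (with the positivity input from Lemma \ref{lem:nv_xi_positivity}) to expand $\|\tilde{\varphi}_{\theta,N,\xi-\half}\|^{-1}$, multiply by the expansion from Lemma \ref{lem:varphi_tilde_expansion}, and use the explicit form \eqref{eq:inverse_expansion:repr} to recognize the projection $\Pr_{\hat{\varphi}^{(0)}_{N,\xi-\half}}\hat{P}_{0,N,\xi}$ in the highest-order contribution. Your computation of the coefficient $-\nu^{-3}(\tilde{\varphi}^{(0)},\tilde{\varphi}^{(\ell)})$ from \eqref{eq:inverse_expansion:repr} with $p=-1$ and the subsequent identification of the projection are cleanly carried out.
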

\begin{proof}
Lemma \ref{lem:nv_xi_positivity} guarantees that Lemma \ref{lem:inverse_expansion} can be applied to get an expansion of $\|\tilde{\varphi}_{\theta,N,\xi-\half}\|^{-1}$:
\begin{align}\notag
\|\tilde{\varphi}_{\theta,N,\xi-\half}\|^{-1}
=~&
\nu_{N,\xi-\half}^{-1} + \sum_{\ell=1}^{L} \theta^{\ell/2} Z^{(\ell)}[\hat{\varphi}_{N,\xi+\half}^{(0)},\ldots,\hat{\varphi}_{N,\xi+\half}^{(\ell)}]
\\~& \label{eq:varphi_denominator_scaling}
+
\theta^{\frac12+L/2} Z_{\theta}^{(1+L^+)}[\hat{\varphi}_{N,\xi+\half}^{(0)},\ldots,\hat{\varphi}_{N,\xi+\half}^{(L-1)},\hat{\varphi}_{\theta,N,\xi+\half}^{(L^+)}],
\end{align}
where, for any $c>0$, $Z^{(\ell)}$ and $Z_{\theta}^{(L^+)}$, being combinations of the $F$ functions from Lemma \ref{lem:inverse_expansion} and the $M$ functions from Lemma \ref{lem:varphi_tilde_expansion}, are Lipschitz-continuous on the set $\|\hat{\varphi}_{N,\xi+\half}^{(\ell)}\| \leq c$ ($\ell=0,\ldots,L-1$), $\|\hat{\varphi}_{\theta,N,\xi+\half}^{(L^+)}\| \leq c$ uniformly in $\theta\in(0,\theta_0]$ for some $\theta_0 = \theta_0(c)>0$.

Thus, we get the expansion of $\hat{\varphi}_{N,\xi-\half}$ by multiplying \eqref{eq:varphi_tilde_expansion_1}--\eqref{eq:varphi_tilde_expansion_3} and \eqref{eq:varphi_denominator_scaling}.

It only remains to show validity of the representations \eqref{eq:varphi_hat_expansion_1}--\eqref{eq:varphi_hat_expansion_3}.
Indeed, \eqref{eq:varphi_hat_expansion_1} can be easily shown by tracking the leading order terms.
To show \eqref{eq:varphi_hat_expansion_3}, we use the representation \eqref{eq:inverse_expansion:repr} to express
\[
Z^{(L)}[\hat{\varphi}_{N,\xi+\half}^{(0)},\ldots,\hat{\varphi}_{\theta,N,\xi+\half}^{(L^+)}]
=
-\nu_{N,\xi-\half}^{-2} \big(P_{0,N,\xi}[\hat{\varphi}_{\theta,N,\xi+\half}^{(L^+)}], \tilde{\varphi}^{(0)}_{N,\xi-\half}\big)_{L^2}
+
\tilde{Z}^{(L)}[\hat{\varphi}_{N,\xi+\half}^{(0)},\ldots,\hat{\varphi}_{N,\xi+\half}^{(L-1)}]
\]
and note that the terms involving $\hat{\varphi}_{N,\xi+\half}^{(L)}$ (that cannot be adsorbed into $\hat{M}_{\theta,N,\xi}^{(L^{++})}$) sum up to
\begin{align*}
&
\nu_{N,\xi-\half}^{-1}P_{0,N,\xi}[\hat{\varphi}_{N,\xi+\half}^{(L)}] -\nu_{N,\xi-\half}^{-2} \big(P_{0,N,\xi}[\hat{\varphi}_{\theta,N,\xi+\half}^{(L^+)}], \tilde{\varphi}^{(0)}_{N,\xi-\half}\big)_{L^2}
\\=~&
{\Pr}_{\tilde{\varphi}^{(0)}_{N,\xi-\half}}  \hat{P}_{0,N,\xi}[\hat{\varphi}_{N,\xi+\half}^{(L)}]
=
\hat{Q}_{N,\xi}[\hat{\varphi}_{N,\xi+\half}^{(L)}]
.
\end{align*}
\eqref{eq:varphi_hat_expansion_2} is shown similarly.
\end{proof}

We now can finalize the proof for the leading order term:
\begin{proof}[Proof of \eqref{eq:phi_n_expansion_term_start}]
Easily follows from Lemma \ref{lem:interation_boundness:induction_basis}, \eqref{eq:varphi_hat_expansion_2} and \eqref{eq:P0_explicit} by induction.
\end{proof}

\begin{lemma}\label{lem:Qnorm}
There exist $0<\mu<1$ and $\xi_0>0$ such that $\|\hat{Q}_{N,\xi}\| \leq \mu$ for all $N$ and $\xi_0\leq\xi\leq N-\xi_0$.
\end{lemma}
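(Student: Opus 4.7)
The plan is to exploit the fact that $\hat{Q}_{N,\xi}$ is essentially $\hat{P}_{0,N,\xi}$ with its top singular direction projected away, and then identify the second singular value via the explicit spectrum of a Gaussian integral operator.

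First, I would observe that by Proposition~\ref{prop:P0_explicit} together with the definition $\nu_{N,\xi-\half} = \|\tilde{\varphi}^{(0)}_{N,\xi-\half}\|$, the normalized operator $\hat{P}_{0,N,\xi}$ satisfies $\hat{P}_{0,N,\xi}[\hat{\varphi}^{(0)}_{N,\xi+\half}] = \hat{\varphi}^{(0)}_{N,\xi-\half}$, and since $\hat{\varphi}^{(0)}_{N,\xi-\half}$ is a unit vector, it lies in $\ker(\Pr_{\hat{\varphi}^{(0)}_{N,\xi-\half}})$. Hence, decomposing an arbitrary $\phi\in L^2$ as $\phi = c\,\hat{\varphi}^{(0)}_{N,\xi+\half} + \phi^\perp$ with $\phi^\perp \perp \hat{\varphi}^{(0)}_{N,\xi+\half}$, one gets $\hat{Q}_{N,\xi}[\phi] = \hat{Q}_{N,\xi}[\phi^\perp]$, and since $\|\Pr\|\leq 1$,
\[
\|\hat{Q}_{N,\xi}\| \,\leq\, \big\|\hat{P}_{0,N,\xi}\big|_{(\hat{\varphi}^{(0)}_{N,\xi+\half})^\perp}\big\|.
\]
Thus the task reduces to bounding the ``second singular value'' of $\hat{P}_{0,N,\xi}$ by some $\mu<1$, uniformly in $N,\xi$ with $\xi_0\leq\xi\leq N-\xi_0$.

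Next, I would pass to the limit $\xi,N-\xi\to\infty$. By the asymptotics~\eqref{eq:ass:V_asymptote} and Lemma~\ref{lem:P0_recurrent}, the coefficients $(a_{N,\xi},b_{N,\xi},c_{N,\xi})\to(1,\alpha,1)$ and $\sigma_{N,\xi\pm\half}\to\kappa$, whence $\nu_{N,\xi-\half}\to\sqrt{2\pi/(1+\kappa)}$. Consequently, the kernel of $\hat{P}_{0,N,\xi}$ converges in Hilbert--Schmidt norm to the symmetric Gaussian kernel
\[
k_\infty(x,y) \,=\, \sqrt{(1+\kappa)/(2\pi)}\; e^{-(x^2+y^2)/2 \,-\, \alpha\,xy},
\]
which defines a self-adjoint compact operator $\hat{P}_\infty$ on $L^2(\bbR)$. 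Since the top direction $\hat{\varphi}^{(0)}_{N,\xi+\half}=\psi_{\sigma_{N,\xi+\half}}$ likewise converges in $L^2$ to $\psi_\kappa$, a standard perturbation estimate yields
\[
\big\|\hat{P}_{0,N,\xi}\big|_{(\hat{\varphi}^{(0)}_{N,\xi+\half})^\perp}\big\|
\,\longrightarrow\,
\big\|\hat{P}_\infty\big|_{\psi_\kappa^\perp}\big\|
\qquad \text{as } \xi,N-\xi\to\infty.
\]

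It then remains to compute $\|\hat{P}_\infty|_{\psi_\kappa^\perp}\|$. Here I would invoke Mehler's identity (or verify directly by completing the square, much as in the calculation preceding Lemma~\ref{lem:P0_recurrent}) to identify the eigenfunctions of $\hat{P}_\infty$ as $\phi_k(y) = H_k(\sqrt{\kappa}\,y)\,e^{-\kappa y^2/2}$ with eigenvalues $\lambda^k$, where $\lambda = -\alpha/(1+\kappa) = -(1-\kappa)/\alpha$ (the two are equal since $\alpha^2=1-\kappa^2$). The top eigenvalue is $\lambda^0=1$ with eigenfunction proportional to $\psi_\kappa$, while all remaining eigenvalues have modulus at most $|\lambda|$; moreover $|\lambda|^2 = (1-\kappa)/(1+\kappa) < 1$ since $\kappa>0$. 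Thus $\|\hat{P}_\infty|_{\psi_\kappa^\perp}\| = |\lambda| < 1$, and choosing $\mu := (1+|\lambda|)/2 \in (|\lambda|,1)$ and $\xi_0$ large enough completes the proof. The main obstacle is the spectral identification of $\hat{P}_\infty$; once one accepts (via Mehler) that its eigenfunctions are Hermite functions in the variable $\sqrt{\kappa}\,y$ with geometric eigenvalues, the rest is a routine perturbation-of-operators argument combined with the uniform convergence already established in Lemma~\ref{lem:P0_recurrent}.
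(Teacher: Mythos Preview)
Your argument is correct and reaches the same conclusion as the paper, but via a different spectral identification of the limiting operator $\hat{P}_\infty$. The paper does not compute the spectrum explicitly: it observes that $\hat{P}_\infty$ is compact, self-adjoint, and has a strictly positive kernel, and then invokes a Perron--Frobenius-type result (cited from Helffer's spectral theory monograph) to conclude that the top eigenvalue is simple and corresponds to an everywhere positive eigenfunction; since $\psi_\kappa$ is positive and has eigenvalue $1$, it must be that eigenfunction, and the restriction of $\hat{P}_\infty$ to $\psi_\kappa^\perp$ has norm strictly less than $1$. You instead invoke Mehler's formula to identify the full spectrum as $\{\lambda^k : k\geq 0\}$ with $\lambda=-\alpha/(1+\kappa)$, which yields the explicit value $|\lambda|=\sqrt{(1-\kappa)/(1+\kappa)}$ for the restricted norm. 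Your route gives strictly more information --- a quantitative spectral gap rather than a bare existence statement --- which could in principle feed into sharper constants downstream. The paper's route is softer and would generalize more readily to kernels that are positive but not exactly Gaussian. Both arguments share the same perturbation step, passing from the finite-$N$ operator to the limit via convergence of $(a_{N,\xi},b_{N,\xi},c_{N,\xi},\sigma_{N,\xi\pm\half})$; the paper phrases this as convergence of $\hat{Q}_{N,\xi}$ itself to ${\Pr}_{\psi_\kappa}\hat{P}_\infty$, whereas you first bound $\|\hat{Q}_{N,\xi}\|$ by $\|\hat{P}_{0,N,\xi}|_{(\hat{\varphi}^{(0)}_{N,\xi+\half})^\perp}\|$ and then pass to the limit in that quantity --- an equivalent but slightly cleaner decoupling.
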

\begin{proof}
Let
$p_{\infty}(x,y)
:=
e^{-x^2/2-\alpha x y-y^2/2}$,
$P_{\infty}[\varphi](y) := \int_{-\infty}^{\infty} p_\infty(x,y) \varphi(y) \dy$, $\nu_\infty := \big(\frac{2\pi}{1+\kappa}\big)^{1/2}$, and $\hat{P}_{\infty} := \frac{1}{\nu_{\infty}} P_{\infty}$.
Then, note that an argument similar to Proposition \ref{prop:P0_explicit} applied to $\hat{P}_{\infty}$ yields that
$
\hat{P}_{\infty}[\psi_\kappa](x) =
\psi_{\sigma'}
$
with $\sigma'=1-\frac{\alpha^2}{(1+\kappa)}=\kappa$.
That is, $\psi_\kappa$ is the eigenfunction of $\hat{P}_{\infty}$ with eigenvalue $1$.
Next, note that $\hat{P}_{\infty}$ is a compact self-adjoint operator with a positive kernel.
A standard result in the spectral theory of integral operators yields that its maximal eigenvalue has multiplicity one and it is the largest by modulus eigenvalue, and moreover the associated eigenfunction is everywhere positive. (This result follows, e.g., from \cite[Theorem 7.1.3]{Helffer2010spectral}.)

Hence $\psi_\kappa$ is such an eigenfunction (due to orthogonality of different eigenfunctions, there is a unique eigenfunction of norm one that is everywhere positive) and hence the space orthogonal to $\psi_\kappa$ is an invariant subspace of the operator $\hat{P}_{\infty}$ and its restriction on this subspace has norm less than $1$.
The latter statement implies that
$\|{\Pr}_{\psi_{\kappa}} P_{\infty}\| < 1$.

It remains to note that since we have just proved \eqref{eq:phi_n_expansion_term_start}, $\hat{Q}_{N,\xi} = {\Pr}_{\psi_{\sigma_{N,\xi-\half}}} \hat{P}_{0,N,\xi}$ converges to ${\Pr}_{\psi_{\kappa}} P_{\infty}$ as $a_{N,\xi}\to 1$, $b_{N,\xi}\to \alpha$, $c_{N,\xi}\to 1$, $\sigma_{N,\xi-\half}\to \kappa$ (thanks to Lemma \ref{lem:operator_norm_bound}), and the four latter statements hold as $K_0\to\infty$ thanks to \eqref{eq:ass:V_asymptote} and \eqref{eq:sigma_minus_kappa}.
\end{proof}

\begin{proof}[Proof of Lemma \ref{lem:interation_boundness}]
It remains to show \eqref{eq:phi_n_expansion_term_mid}.

We prove the first bound in \eqref{eq:phi_n_expansion_term_mid} by induction over $\ell \geq 1$.
That is, we assume that it holds for the smaller values of $\ell$.
Hence we can estimate, using Lemma \ref{lem:Qnorm},
\begin{align*}
\big\|\hat{\varphi}_{N,\xi-\half}^{(\ell)}\big\|
\leq~& \|\hat{Q}_{N,\xi} \big[\hat{\varphi}_{N,\xi+\half}^{(\ell)}\big]\big\|
+
\big\| \hat{M}_{N,\xi}^{(\ell)}\big[\hat{\varphi}_{N,\xi+\half}^{(0)},\ldots,\hat{\varphi}_{N,\xi+\half}^{(\ell-1)}\big] \big\|
\leq
\mu \big\|\hat{\varphi}_{N,\xi+\half}^{(\ell)}\big\|
+ C
\end{align*}
for $\xi_0\leq \xi\leq N-\xi_0$, where, since $\hat{M}_{N,\xi}^{(\ell)}$ is uniformly locally Lipschitz, $C$ does not depend on $N$, $\theta$, or $\xi$ by our induction assumption.

Hence Lemma \ref{lem:recursive_bound} gives us the bound
\[
\max_{K_0\leq n\leq N-K_0} \|\hat{\varphi}_{N,n}^{(\ell)}\|
\leq \max\big\{ \|\hat{\varphi}_{N,N-K_0}^{(\ell)}\|, \smfrac{C}{1-\mu}\big\}
\qquad\text{for $K_0 = \xi_0+\smfrac12$}
.
\]
Finally, notice that since $\|\hat{Q}_{N,\xi}\|$ are bounded even for $\xi>N-K_0$ (but not necessarily by $\mu<1$), and $\hat{\varphi}_{N,N}^{(\ell)}$ is bounded independently of $N$ (cf.\ Lemma \ref{lem:interation_boundness:induction_basis}), $\max_{K_0\leq n<N} \|\hat{\varphi}_{N,n}^{(\ell)}\|$ are also bounded independently of $N$.

Finally, to prove the second bound in \eqref{eq:phi_n_expansion_term_mid}, we will use the uniform local Lipschitz continuity of $\hat{M}_{\theta,N,\xi}^{(L^{++})}$ around $0$, i.e.,
\[
\hat{M}_{\theta,N,\xi}^{(L^{++})}[\hat{\varphi}_{N,\xi+\half}^{(0)},\ldots,\hat{\varphi}_{N,\xi+\half}^{(L-1)},\hat{\varphi}_{\theta,N,\xi+\half}^{(L^+)}]
\leq C_r \big\|\hat{\varphi}_{\theta,N,\xi+\half}^{(L^+)}\big\|
\qquad
(\text{for }\big\|\hat{\varphi}_{\theta,N,\xi+\half}^{(L^+)}\big\| \leq r,
~
0<\theta\leq \theta_0)
,
\]
where $\theta_0=\theta_0(r)$ and $C_r$ depend on $r$ which will be chosen later.
We estimate the first two terms in \eqref{eq:varphi_hat_expansion_3} similarly to the above and we get
\begin{align*}
\big\|\hat{\varphi}_{\theta,N,\xi-\half}^{(L^+)}\big\|
\leq ~&
\mu \, \big\|\hat{\varphi}_{\theta,N,\xi+\half}^{(L^+)}\big\|
+ C
+ \theta^{1/2} C_r \big\|\hat{\varphi}_{\theta,N,\xi+\half}^{(L^+)}\big\|
\\ = ~&
\big(\mu + \theta^{1/2} C_r\big) \, \big\|\hat{\varphi}_{\theta,N,\xi+\half}^{(L^+)}\big\|
+ C
\qquad\forall \xi\leq K_0+\smfrac12
.
\end{align*}
Next, pick some $\mu'\in(\mu,1)$ independently of $r$.
Then for $\theta^{1/2} \leq (\mu'-\mu)/C_r$, Lemma \ref{lem:recursive_bound} again applies and yields the bound $C_{L^+}$ in \eqref{eq:phi_n_expansion_term_mid} similarly as above.

It remains to notice that $C_{L^+}$ is independent of the choice of $r$, hence we can now choose $r=C_{L^+}$ and possibly decrease $\theta_0=\theta_0(r)$ to satisfy $\theta_0^{1/2} \leq (\mu'-\mu)/C_r$.
\end{proof}

\section{Proof for the Infinite Lattice}\label{sec:infinite}

In principle, the above results should be enough to establish convergence of a subsequence of $\mu_{\theta,N}$ and their expansion terms as $N\to\infty$.
However, our goal, set fourth by Theorem \ref{th:uniform_expansion}, is more ambitious: we want to prove that any possible sequence converges to a unique $\mu_{\theta,\infty}$ together with the respective expansion terms.

The structure of this section is as follows.
In Section \ref{sec:infinite:1} we establish the convergence of $\mu_{\theta,N}$ as $N\to\infty$, in Section \ref{sec:infinite:2}---the convergence of $\mu^{(0)}_{N}$, and in Section \ref{sec:infinite:3}---the convergence of other expansion terms.
Next, in Section \ref{sec:infinite:4} we show convergence of the inverted Hessians of the energy.
Finally, in Section \ref{sec:infinite:5} we combine all these results into a proof of the statements of Theorem \ref{th:uniform_expansion} for the infinite lattice.

We retain the following conventions from Section \ref{sec:finite_lattice}: $L=2M$ and $C$ denotes a generic constant independent of $N$ or $\theta$.
However, we will no longer omit tildes in the notations introduced in Section \ref{sec:finite_lattice:change}.

\subsection{Convergence of $\mu_{\theta,N}$}\label{sec:infinite:1}

\begin{lemma}\label{lem:mu_orig_conv}
For any $\theta>0$ there exists $\mu_{\theta,\infty} = \lim_{N\to\infty}\mu_{\theta,N}$.
\end{lemma}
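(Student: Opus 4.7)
The plan is to pass to the limit $N\to\infty$ in the integral representation \eqref{eq:expansion_form} of $\<\mu_{\theta,N},A\>$ provided by Proposition \ref{prop:expansion_reduction}. Fix $A\in\C^{2M}(\calU_K)$ for some $K\ge K_0$ (the case $K<K_0$ reduces to this by embedding the test space). The $N$-dependence of both numerator and denominator enters through three objects: the equilibrium $u^*_N$ appearing in $A(u^*_N+\sqrt\theta v)$, the local energy $E_{N,K}(\sqrt\theta v)$, and the boundary weights $\hat{\varphi}_{\theta,N,K}(v_{\pm K})$. The first two converge easily: Lemma \ref{lem:zero_temp_approx} gives $u^*_N\to u^*_\infty$ in $\ell^2$, which together with \eqref{eq:orig_ass:V_bound} yields locally uniform convergence $V_{N,\xi}\to V_{\infty,\xi}$ with all required derivatives, hence pointwise convergence of both $E_{N,K}(\sqrt\theta v)$ and $A(u^*_N+\sqrt\theta v)$.

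The crux is to show that $\hat{\varphi}_{\theta,N,K}\to\hat{\varphi}_{\theta,\infty,K}$ in $L^2(\bbR)$, where $\hat{\varphi}_{\theta,\infty,K}$ is a to-be-defined limit weight. To this end I would analyze the normalized iteration \eqref{eq:hatphi_representation}, namely
\[
\hat{\varphi}_{\theta,N,n-1} \,=\, \frac{P_{\theta,N,n-\half}[\hat{\varphi}_{\theta,N,n}]}{\|P_{\theta,N,n-\half}[\hat{\varphi}_{\theta,N,n}]\|},
\]
initialized at $\hat{\varphi}_{\theta,N,N}=p^\bc_{\theta,N}/\|p^\bc_{\theta,N}\|$. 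Two ingredients drive a Cauchy-style argument. First, for each fixed $\xi$ the operator $P_{\theta,N,\xi}$ converges in operator norm to some $P_{\theta,\infty,\xi}$ as $N\to\infty$, by Lemma \ref{lem:operator_norm_bound} applied to the pointwise kernel convergence implied by $u^*_N\to u^*_\infty$; and by \eqref{eq:ass:V_asymptote} the operator becomes approximately independent of both $\xi$ and $N$ once $\xi$ is in the bulk. Second, the normalized iteration is exponentially contracting on the cone of positive $L^2$-functions, uniformly in $N$ and in $\xi$ in the bulk. For $\theta$ small this contraction is inherited from the spectral-gap estimate of Lemma \ref{lem:Qnorm} by linearizing around the leading-order fixed point $\psi_{\sigma_{N,n}}$. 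For arbitrary $\theta\in(0,\theta_0]$ the cleaner route is Birkhoff--Hopf's theorem in Hilbert's projective metric: since $P_{\theta,\infty,\xi}$ has a strictly positive Gaussian-like kernel its image cone has finite projective diameter, hence the induced map on rays is a strict contraction with a rate bounded away from $1$ uniformly in the parameters.

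Combining these ingredients, for $N'>N\gg K$ the iterates $\hat{\varphi}_{\theta,N,\cdot}$ and $\hat{\varphi}_{\theta,N',\cdot}$ arise from iterating operators that agree up to $o(1)$, while any discrepancy from the two initial conditions is damped by a factor $\mu^{N-K}$ with $\mu<1$. Standard perturbation bookkeeping then yields $\|\hat{\varphi}_{\theta,N,K}-\hat{\varphi}_{\theta,N',K}\|_{L^2}\to 0$, so the sequence is Cauchy in $L^2$ and converges to some $\hat{\varphi}_{\theta,\infty,K}$. Dominated convergence now delivers the conclusion: the integrand in \eqref{eq:expansion_form} is controlled by an $N$-independent integrable Gaussian envelope via the quadratic growth \eqref{eq:ass:V_growth} of $E_{N,K}$ and the uniform Gaussian decay of $\hat{\varphi}_{\theta,N,K}$ from Lemmas \ref{lem:uniform_est:key} and \ref{lem:Gaussian_expansion}; the denominator stays bounded away from zero thanks to the uniform positivity of $\hat{\varphi}_{N,K}^{(0)}=\psi_{\sigma_{N,K}}$ on a fixed compact set, with $\sigma_{N,K}$ controlled by Lemma \ref{lem:P0_recurrent}.

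The main obstacle is the contraction property, ingredient (ii) above. The iteration is nonlinear through the normalization, so carrying Lemma \ref{lem:Qnorm}'s linear spectral-gap estimate into the full nonlinear setting, uniformly in both $\theta$ and $N$, requires care; the Birkhoff--Hopf approach is conceptually cleanest but demands verifying that the projective diameter of the relevant image cone stays bounded uniformly in the parameters. Once this contraction is secured, the rest of the proof is routine dominated convergence.
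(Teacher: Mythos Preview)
Your strategy is viable in principle but takes an unnecessarily hard route, and the paper's proof is substantially simpler for one reason: it does \emph{not} use the shifted representation of Proposition~\ref{prop:expansion_reduction}. Instead it writes an analogous integral formula directly in the original (unshifted) variables, with the transfer operator
\[
P[\varphi](x)=\int_{-\infty}^{\infty} e^{-\theta^{-1}V(x,y)}\,\varphi(y)\,\dd y,
\]
built from the fixed potential $V$ rather than from $\tilde V_{N,\xi}$. This operator is \emph{independent of $N$}; the only $N$-dependence in the boundary weight is the iteration count, so the problem reduces to the classical power-method convergence $P^{\circ(N-K)}[p^\bc_\theta]\to\text{(multiple of leading eigenfunction)}$, which follows from a Perron--Frobenius/Krein--Rutman argument on $L^2(\calA)$ for the positive kernel $e^{-\theta^{-1}V}$. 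No Cauchy-in-$N$ bookkeeping, no perturbation of $N$-dependent operators, and no small-$\theta$ restriction is needed.

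By contrast, your plan works in the tilded coordinates where the operators $P_{\theta,N,\xi}$ genuinely depend on $N$ through $u^*_N$, forcing you into a perturbative Cauchy argument on top of a contraction. Two concrete issues arise. First, you invoke Lemmas~\ref{lem:uniform_est:key}, \ref{lem:Gaussian_expansion}, and \ref{lem:Qnorm} for the dominating envelope and the contraction, but those results are all proved under the restriction $\theta\le\theta_0$; the present lemma is for \emph{every} $\theta>0$, so you cannot appeal to them outside that range. Your Birkhoff--Hopf alternative would indeed cover all $\theta>0$, but you would still need a $\theta$-uniform integrable envelope for the dominated-convergence step, and the small-$\theta$ Gaussian structure is no longer available. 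Second, the kernel $e^{-\theta^{-1}V}$ vanishes off $\calA\times\calA$, so the finite-projective-diameter hypothesis of Birkhoff--Hopf must be checked on $L^2(\calA)$ rather than $L^2(\bbR)$; the paper handles exactly this subtlety when applying the Perron--Frobenius argument. None of this is fatal, but the unshifted-coordinates trick removes all of it at once.
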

\begin{proof}
Similarly to Proposition \ref{prop:expansion_reduction} (note that we now use tildes for the notations introduced in Section \ref{sec:finite_lattice:change}), we can express
\begin{displaymath}
\<\mu_{\theta,N}, A\>
=
\frac{%
	\int_{\calU_{K}}
	A(u)
	e^{-\theta^{-1} \bar{E}_K(u)}
	P^{\circ N-K}[p_\theta^{\bc}] (u_{K})\,
	P^{\circ N-K}[p_\theta^{\bc}] (u_{-K})
	\du
}{%
	\hphantom{A(u)}
	\int_{\calU_{K}}
	e^{-\theta^{-1} \bar{E}_K(u)}
	P^{\circ N-K}[p_\theta^{\bc}] (u_{K})\,
	P^{\circ N-K}[p_\theta^{\bc}] (u_{-K})
	\du
}
\end{displaymath}
for $A\in \C^{2M}(\calU_K)$, where
\begin{align}\notag
	\bar{E}_{K}(v)
	:=~&
	\calP(u) +
	\sum_{\xi=-K+\half}^{K-\half} V(u_{\xi-\half}, u_{\xi+\half})
,
\\ \notag
	P[\varphi](x)
	:=~&
	\int_{-\infty}^{\infty}
		e^{-\theta^{-1} V(x,y)} \varphi(y)
	\dy
	\quad \forall\theta>0
,
\\ \notag
	p_\theta^\bc(x)
	:=~&
	e^{-\theta^{-1} V_\theta^\bc(x,y)}
	\quad \forall\theta>0
.
\end{align}

We will use the earlier argument to conclude that the leading eigenfunction of $P_\theta$ is unique and positive.
This, however, requires some care.
Recall the definition of $\calA$ from \eqref{eq:orig_ass_X}.
The operator $\bar{P}_\theta := P_\theta|_{L^2(\calA \to \calA)}$ has a strictly positive kernel in the sense of \cite[Definition 7.1.2]{Helffer2010spectral} and hence we can argue as in \cite[Theorem 7.1.3]{Helffer2010spectral} to obtain a unique eigenvalue $\lambda_1$ with the largest absolute value corresponding to an almost everywhere positive eigenfunction $\psi:\calA\to\bbR$.
We extend it by zero outside $\calA$ and choose its scaling so that $\|\psi\|_{L^2}=1$.

Next, one can show that $\<P[p_\theta^\bc],\psi\>_{L^2} \ne 0$ (note that in general $p_\theta^\bc\notin L^2$)---a converse of this statement quickly leads to concluding that $V_\theta^\bc=+\infty$ on $\calA$, which contradicts \eqref{eq:orig_ass:Vbc_bound}.
Hence we have that
\[
\frac{\lambda^{-N+K+1} P^{\circ N-K}[p_\theta^\bc]}{\<P[p_\theta^\bc],\psi\>_{L^2}} \to \psi
\qquad\text{in $L^2$}
,
\]
as $N\to\infty$, and therefore
\begin{displaymath}
\<\mu_{\theta,N}, A\>
\to
\frac{%
	\int_{\calU_{K}}
	A(u)
	e^{-\theta^{-1} \bar{E}_{K}(u)}
	\psi(u_{K})\,
	\psi(u_{-K})
	\du
}{%
	\hphantom{A(u)}
	\int_{\calU_{K}}
	e^{-\theta^{-1} \bar{E}_{K}(u)}
	\psi(u_{K})\,
	\psi(u_{-K})
	\du
}
=
\<\mu_{\theta,\infty}, A\>,
\end{displaymath}
which is a well-defined limit upon noting the positivity of the denominator and the quadratic growth of $\bar{E}_{K}(u)$ at infinity (the latter follows, e.g., from Lemma \ref{lem:Enk_convex}).
The uniqueness of $\mu_{\theta,\infty}$ and its independence from $V_\theta^\bc$ is obvious.
\end{proof}

\subsection{Convergence of $\mu_{N}^{(0)}$}\label{sec:infinite:2}

We need to prove one more property of $\tilde{V}_{N,\xi}$ in addition to those in Lemma \ref{lem:ass}.
\begin{lemma}\label{lem:ass2}
\begin{align}
\label{eq:ass2:der_conv}
\nabla^{2+\ell} V(u^*_{N,\xi-\half},u^*_{N,\xi+\half}) \to
\nabla^{2+\ell} V(u^*_{\infty,\xi-\half},u^*_{\infty,\xi+\half}),
\qquad \forall\ell=0,\ldots,L+1
\end{align}
as $N\to\infty$ uniformly in $\xi$.
\end{lemma}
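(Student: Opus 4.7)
The plan is to deduce this uniform convergence from two ingredients already at hand: the uniform $C^{2M+3}$-regularity of $V$ on $B_{r_V}$ provided by \eqref{eq:orig_ass:V_bound}, and the strong convergence of the discrete defect $u^*_N$ established in Lemma \ref{lem:zero_temp_approx}.

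First, I would observe that since $\ell$ ranges over $0,\ldots,L+1 = 2M+1$, the derivative orders $2+\ell$ appearing in the claim lie between $2$ and $2M+3$, which is exactly the regularity window guaranteed by \eqref{eq:orig_ass:V_bound}. Consequently, each $\nabla^{2+\ell} V$ is uniformly continuous on $B_{r_V}$ (in fact uniformly Lipschitz for $\ell \leq L$, and merely uniformly continuous at the borderline $\ell = L+1$). Thus for every $\eps>0$ there exists $\delta>0$ such that
\[
\big|\nabla^{2+\ell}V(x_1,y_1) - \nabla^{2+\ell}V(x_2,y_2)\big| < \eps
\]
whenever $(x_1,y_1),(x_2,y_2)\in B_{r_V}$ and $|x_1-x_2|+|y_1-y_2|<\delta$.

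Next, I would combine Lemma \ref{lem:zero_temp_approx} with the trivial embedding $\|\cdot\|_{\ell^\infty}\leq\|\cdot\|_{\ell^2}$ to obtain $\|u^*_N - u^*_\infty|_{\calB_N}\|_{\ell^\infty}\to 0$ as $N\to\infty$. Since $r_V > \|u^*_\infty\|_{\ell^\infty}$ by assumption, for $N$ sufficiently large both pairs $(u^*_{N,\xi-\half},u^*_{N,\xi+\half})$ and $(u^*_{\infty,\xi-\half},u^*_{\infty,\xi+\half})$ lie in $B_{r_V}$ and are within $\delta$ in $\ell^\infty$, simultaneously for every $\xi\in\calL_N$. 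Plugging this into the modulus of continuity from the previous paragraph yields the claimed uniform convergence.

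I do not expect any serious obstacle here; the lemma is a soft corollary of the regularity of $V$ and the already-proved $\ell^2$-convergence of the discrete equilibrium. The one point worth emphasizing is that a single choice of $N_0(\eps)$ works for all $\xi\in\calL_N$ at once, since the $\ell^\infty$ bound on $u^*_N - u^*_\infty|_{\calB_N}$ is uniform in the position $\xi$ and the modulus of continuity of $\nabla^{2+\ell}V$ is uniform on $B_{r_V}$; this is precisely what is meant by "uniformly in $\xi$".
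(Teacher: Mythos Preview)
Your proposal is correct and follows exactly the same approach as the paper, which simply states that the result ``follows immediately from the assumed regularity on $V$ and the convergence of $u^*_N$ to $u^*_\infty$.'' You have merely spelled out the details---uniform continuity of the derivatives on $B_{r_V}$ combined with the $\ell^\infty$-convergence of $u^*_N$ inherited from Lemma~\ref{lem:zero_temp_approx}---that the paper leaves implicit.
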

\begin{proof}
This follows immediately from the assumed regularity on $V$ and the convergence of $u^*_N$ to $u^*_\infty$.
\end{proof}

To establish convergence of $\mu_{N}^{(0)}$, we must establish convergence of
$
\hat{\varphi}_{N,K}^{(0)}
=
\psi_{\sigma_{N,K}}
$
(cf.\ \eqref{eq:phi_n_expansion_term_start}) for a fixed $K$ as $N\to\infty$.
It will be easy to reduce proving the convergence of $\hat{\varphi}_{N,K}^{(0)}$ to proving the convergence of $\sigma_{N,K}$ which we state in the following lemma
\begin{lemma}\label{lem:conv_sigma}
There exists a sequence $\sigma_{\infty,n} \geq \kappa^2/2$ such that
\begin{equation}\label{eq:sigma_limit}
\sup_{\substack{n\in\bbN \\ n \leq N/2}}|\sigma_{N,n}-\sigma_{\infty,n}| \to 0
\qquad\text{as }N\to\infty.
\end{equation}
\end{lemma}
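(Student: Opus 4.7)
The plan is to exploit the fact that, away from the boundary, the map defining the recurrence is a uniform contraction on the interval $[\kappa^2/2, 2]$ in which $\sigma_{N,n}$ lives. Write $\Phi_{N,\xi}(\sigma) := a_{N,\xi} - b_{N,\xi}^2/(c_{N,\xi}+\sigma)$ so that $\sigma_{N,n-1} = \Phi_{N,n-\half}(\sigma_{N,n})$, and define $\Phi_{\infty,\xi}$ analogously using $a_{\infty,\xi} := V_{xx}(u^*_{\infty,\xi-\half},u^*_{\infty,\xi+\half})$, etc. The derivative $\Phi_{N,\xi}'(\sigma) = b_{N,\xi}^2/(c_{N,\xi}+\sigma)^2$ is, for $\xi \geq K_0$, arbitrarily close to $(1-\kappa^2)/(1+\sigma)^2$, and a direct calculation shows this is bounded by $(1-\kappa^2)/(1+\kappa^2/2)^2 < 1$ uniformly in $\sigma\in[\kappa^2/2,2]$. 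Enlarging $K_0$ if necessary, I would fix $\mu_0\in(0,1)$ bounding the Lipschitz constants of all $\Phi_{N,\xi}$ (including $N=\infty$) on $[\kappa^2/2,2]$ for $\xi\geq K_0$.

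To construct $\sigma_{\infty,n}$ intrinsically, for fixed $n\geq K_0$ and $M\geq n$ I would iterate $\Phi_\infty$ backwards from $M$ with an arbitrary starting value $\sigma^*\in[\kappa^2/2,2]$. Two such iterates starting at $M_1\leq M_2$ agree up to $\mu_0^{M_1-n}(2-\kappa^2/2)$ at index $n$ (apply the contraction $M_1-n$ times to the sub-iterate starting from the value reached by the longer sequence at $M_1$), so the family is Cauchy in $M$; the limit $\sigma_{\infty,n}$ is independent of $\sigma^*$, lies in $[\kappa^2/2,2]$, and satisfies $\sigma_{\infty,n-1}=\Phi_{\infty,n-\half}(\sigma_{\infty,n})$ by continuity. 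For the finitely many $n<K_0$ I would define $\sigma_{\infty,n}$ by the same recurrence.

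For the uniform convergence in \eqref{eq:sigma_limit}, I would compare $\sigma_{N,n}$ directly with the infinite-lattice iterate $\sigma^{(N-1)}_{\infty,n}$ obtained by running $\Phi_\infty$ backwards from $M=N-1$ with starting value $\sigma_{N,N-1}$. Set
\[
\Delta_N := \sup_{K_0\leq\xi\leq N-1}\ \sup_{\sigma\in[\kappa^2/2,2]}\bigl|\Phi_{N,\xi}(\sigma)-\Phi_{\infty,\xi}(\sigma)\bigr|,
\]
which tends to $0$ because for $\xi\leq N-1$ the boundary correction in $c_{N,\xi}$ is absent, while Lemma \ref{lem:ass2} together with the uniform decay of $u^*_{N,\cdot}$ at infinity gives the uniform convergence of $a_{N,\xi},b_{N,\xi},c_{N,\xi}$ to their infinite-lattice values. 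Writing $\alpha_n := |\sigma_{N,n}-\sigma^{(N-1)}_{\infty,n}|$, we have $\alpha_{N-1}=0$ and $\alpha_{n-1}\leq\mu_0\alpha_n+\Delta_N$, so telescoping gives $\alpha_n\leq\Delta_N/(1-\mu_0)$; and $|\sigma^{(N-1)}_{\infty,n}-\sigma_{\infty,n}|\leq C\mu_0^{N-1-n}$ by the Cauchy bound with $\sigma^*=\sigma_{N,N-1}$. For $n\leq N/2$ both contributions are controlled uniformly by quantities tending to $0$ (with the finitely many $n<K_0$ absorbed into a bounded amplification constant).

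The main obstacle is arranging the argument so that the boundary irregularity does not contaminate the error: $c_{N,N-\half}$ differs from $c_{\infty,N-\half}$ by the macroscopic constant $\gamma_V/2$, so one cannot bound $\Phi_N-\Phi_\infty$ uniformly down to $\xi=N-\half$. Anchoring the comparison at $M=N-1$ sidesteps this entirely: $\Phi_\infty$ is never evaluated at the boundary half-integer, and the boundary anomaly enters only through the shared starting value $\sigma_{N,N-1}\in[\kappa^2/2,2]$, whose influence on the limit decays geometrically in $N-n$.
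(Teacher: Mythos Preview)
Your argument is correct and is organized differently from the paper's. The paper establishes the limit by a Cauchy estimate: it compares $\sigma_{N,n}$ and $\sigma_{N',n}$ directly, writes $|\sigma_{N,\xi-\half}-\sigma_{N',\xi-\half}|\leq\mu\,|\sigma_{N,\xi+\half}-\sigma_{N',\xi+\half}|+r_{N,N',\xi}$ with $r_{N,N',\xi}\to 0$ uniformly, telescopes, and deduces existence of $\sigma_{\infty,n}$ as a pointwise limit. You instead construct $\sigma_{\infty,n}$ intrinsically as the unique backward iterate of the infinite-lattice map $\Phi_\infty$, and then compare $\sigma_{N,n}$ to $\sigma_{\infty,n}$ through the auxiliary sequence $\sigma^{(N-1)}_{\infty,n}$. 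Your route yields a cleaner characterization of the limit (it is the unique bounded solution of the $\Phi_\infty$ recurrence, independent of any terminal data) and isolates the boundary anomaly more explicitly by anchoring at $N-1$; the paper's route is a bit shorter since it avoids building $\sigma_\infty$ separately and absorbs the boundary mismatch directly into the geometrically damped term $\mu^{N-n}|\sigma_{N,N}-\sigma_{N',N}|$.

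One small imprecision: you assert a uniform Lipschitz bound $\mu_0<1$ for \emph{all} $\Phi_{N,\xi}$ with $\xi\geq K_0$, but at the boundary half-integer $\xi=N-\half$ the coefficient $c_{N,\xi}$ is shifted by $\gamma_V/2$ and the claimed bound $(1-\kappa^2)/(1+\sigma)^2$ need not hold there. This does not damage your proof, since you never apply the contraction at $\xi=N-\half$: your telescoping uses only the Lipschitz constant of $\Phi_{\infty,\xi}$ (which has no boundary correction), and your anchoring at $M=N-1$ ensures $\Phi_{N,N-\half}$ is never compared to $\Phi_{\infty,N-\half}$. You should, however, state that $\Phi_{\infty,\xi}$ maps $[\kappa^2/2,2]$ into itself for $\xi\geq K_0$ so that the auxiliary iterates $\sigma^{(N-1)}_{\infty,n}$ remain in the interval where the contraction applies; this follows by the same computation as in Lemma~\ref{lem:P0_recurrent}.
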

\begin{proof}
The proof will be based on estimating $|\sigma_{N,n}-\sigma_{N',n}|$ as $N,N'\to\infty$.
We will work under a convention that the pair ($N$, $N'$) is ordered such that $N \leq N'$.

Denote $S_{N,\xi}(\sigma) := a_{N,\xi}-\frac{b_{N,\xi}^2}{c_{N,\xi}+\sigma}$, so that $\sigma_{N,\xi-\half} = S_{N,\xi}(\sigma_{N,\xi+\half})$, and note that
\[
\frac{\dd}{\dd \sigma} S_{N,\xi}(\sigma)
=
\frac{b_{N,\xi}^2}{(c_{N,\xi}+\sigma)^2}
\leq
\frac{b_{N,\xi}^2}{c_{N,\xi}^2}
\to \alpha^2
\quad\text{as }\xi,N\to\infty
.
\]
Hence there is a constant $\xi_0\in\bbN+\frac12$ such that $0<\frac{\dd}{\dd \sigma} S_{N,\xi}(\sigma)<\mu:=\frac{1+\alpha^2}{2}$ (note that $\mu<1$) for all $\sigma>0$ as long as $N \geq \xi \geq \xi_0$.

{\it Step 1 ($n\geq \xi_0-\frac12$).}

Note that for $N' \geq N \geq \xi \geq \xi_0$,
\begin{align*}
|\sigma_{N,\xi-\half} - \sigma_{N',\xi-\half}|
=~&
|S_{N,\xi}(\sigma_{N,\xi+\half}) - S_{N',\xi}(\sigma_{N',\xi+\half})|
\\=~&
\big|
	\big(S_{N,\xi}(\sigma_{N,\xi+\half})
	- S_{N,\xi}(\sigma_{N',\xi+\half})\big)
	+ \big(S_{N,\xi}(\sigma_{N',\xi+\half})
	- S_{N',\xi}(\sigma_{N',\xi+\half})\big)
\big|
\\ \leq~&
\mu
|\sigma_{N,\xi+\half}-\sigma_{N',\xi+\half}| +
\underbrace{\big|S_{N,\xi}(\sigma_{N',\xi+\half})
	- S_{N',\xi}(\sigma_{N',\xi+\half})\big|}_{=:
	r_{N,N',\xi}}.
\end{align*}
The uniform in $\xi$ convergence of $a_{N,\xi}$, $b_{N,\xi}$ and $c_{N,\xi}$ which follows from \eqref{eq:ass2:der_conv} applied with $\ell=0$, together with boundedness of $\sigma_{N',\xi+\half}$ established in Lemma \ref{lem:P0_recurrent}, can be used to easily prove
\[
\sup_{\substack{\xi\in\bbN+\half \\ \xi_0 \leq \xi \leq N }}|r_{N,N',\xi}| =: r_{N,N'} \to 0
\quad\text{as } N,N'\to\infty.
\]

Then, applying Lemma \ref{lem:recursive_bound}, we obtain
\[
|\sigma_{N,n} - \sigma_{N',n}|
\leq \mu^{N-n} |\sigma_{N,N} - \sigma_{N',N}|
+ \frac{r_{N,N'}}{1-\mu},
\]
which yields, upon recalling that $|\sigma_{N,N} - \sigma_{N',N}|$ is uniformly bounded thanks to \eqref{eq:sigma_est},
\[
\sup_{\xi_0-\half \leq n\leq N/2}
|\sigma_{N,n} - \sigma_{N',n}|
\to 0
\quad\text{as }N,N'\to\infty.
\]
Hence the pointwise in $n$ limit of $\sigma_{N,n}$ exists for $n \geq \xi_0-\half$ and hence we get
\[
\sup_{\substack{n\in\bbN \\ \xi_0-\half \leq n \leq N/2}}|\sigma_{N,n}-\sigma_{\infty,n}| \to 0
\qquad\text{as }N\to\infty.
\]

{\it Step 2 ($n<\xi_0-\frac12$).}

We only need to note that $S_{N,\xi}$ is locally Lipschitz for all $N$ and $\xi$ (although the Lipschitz constant may not be less than $1$ for $\xi<\xi_0$), therefore
$
\sup_{n<\xi_0-\half}
|\sigma_{N,n} - \sigma_{N',n}|
$
is still bounded by $C |\sigma_{N,\xi_0-\half} - \sigma_{N',\xi_0-\half}|\to 0$ for some constant $C=C(\xi_0)$.
This completes the proof of the existence of $\sigma_{\infty,n}$ and \eqref{eq:sigma_limit}.

The bound $\sigma_{\infty, n} \geq \kappa^2/2$ follows from the lower bound in \eqref{eq:sigma_est}.
\end{proof}

\begin{corollary}\label{corr:mu0_conv}
$\sup_{K \leq N/2} \|\hat{\varphi}_{N,K}^{(0)}-\psi_{\sigma_{\infty,K}}\| \to 0$
as $N\to\infty$.
\end{corollary}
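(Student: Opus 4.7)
The corollary follows almost immediately by combining the identity \eqref{eq:phi_n_expansion_term_start}, which gives $\hat{\varphi}_{N,K}^{(0)} = \psi_{\sigma_{N,K}}$, with the uniform convergence $\sigma_{N,K} \to \sigma_{\infty,K}$ established in Lemma \ref{lem:conv_sigma}, so the task reduces to showing continuity of the map $\sigma \mapsto \psi_\sigma$ from $(0,\infty)$ to $L^2(\bbR)$. The plan is therefore to first verify a local Lipschitz estimate $\|\psi_\sigma - \psi_{\sigma'}\|_{L^2} \leq C_{a,b}|\sigma-\sigma'|$ valid for all $\sigma,\sigma'\in[a,b]\subset(0,\infty)$; this is a direct computation from $\psi_\sigma(x) = (\sigma/\pi)^{1/4} e^{-\sigma x^2/2}$ by differentiating in $\sigma$ and bounding the resulting Gaussian in $L^2$ on the compact parameter range $[a,b]$.

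Next, I would split the supremum into the regime $K\geq K_0$ and the regime $K<K_0$. For $K\geq K_0$, Lemma \ref{lem:P0_recurrent} provides the uniform bounds $\sigma_{N,K}\in[\kappa^2/2,2]$, and the limit $\sigma_{\infty,K}$ inherits the same bounds from Lemma \ref{lem:conv_sigma}. Applying the Lipschitz estimate with $[a,b]=[\kappa^2/2,2]$ yields
\[
\sup_{K_0\leq K\leq N/2} \|\psi_{\sigma_{N,K}} - \psi_{\sigma_{\infty,K}}\|_{L^2}
\leq C\,\sup_{K_0\leq K\leq N/2}|\sigma_{N,K}-\sigma_{\infty,K}|
\to 0
\]
as $N\to\infty$, by Lemma \ref{lem:conv_sigma}.

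For $K<K_0$, the range of $K$ is finite and $N$-independent. Pointwise convergence $\sigma_{N,K}\to\sigma_{\infty,K}$ (again from Lemma \ref{lem:conv_sigma}) combined with continuity of $\sigma\mapsto\psi_\sigma$ at each of the finitely many limit values $\sigma_{\infty,K}$ gives $\|\psi_{\sigma_{N,K}} - \psi_{\sigma_{\infty,K}}\|_{L^2}\to 0$ for each such $K$, hence also uniformly over this finite set. Taking the maximum of the two estimates over both ranges completes the proof.

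The only technical wrinkle concerns the small-$K$ regime: one must know $\sigma_{\infty,K}>0$ so that $\psi_{\sigma_{\infty,K}}$ is well-defined. This is either read off directly from the bound in Lemma \ref{lem:conv_sigma}, or established by running the backward recursion $\sigma_{\infty,n-1} = a_{\infty,n-\half} - b_{\infty,n-\half}^2/(c_{\infty,n-\half}+\sigma_{\infty,n})$ starting from the known positive $\sigma_{\infty,K_0}$, using the convergence \eqref{eq:ass2:der_conv} of the coefficients. Apart from this bookkeeping, there is no substantive obstacle in the argument.
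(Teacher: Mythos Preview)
Your argument is correct and follows essentially the same route as the paper: reduce to the identity $\hat{\varphi}_{N,K}^{(0)}=\psi_{\sigma_{N,K}}$, invoke the uniform convergence of $\sigma_{N,K}$ from Lemma~\ref{lem:conv_sigma}, and use continuity of $\sigma\mapsto\psi_\sigma$ on the compact range of $\sigma$-values guaranteed by Lemma~\ref{lem:P0_recurrent}. The paper's proof is a one-liner citing the dominated convergence theorem (to obtain continuity of $\sigma\mapsto\psi_\sigma$ in $L^2$, then uniform continuity on the compact parameter set), whereas you obtain the same conclusion slightly more explicitly via a Lipschitz bound from differentiating $\psi_\sigma$; the content is the same.
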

\begin{proof}
Follows upon a standard application of the dominated convergence theorem.
\end{proof}

\subsection{Convergence of other $\mu_{N}^{(\ell)}$}\label{sec:infinite:3}

In this subsection we prove the induction step, formulated in the following lemma.
\begin{lemma}\label{lem:mu_ell_conv}
For any $\ell\in\{1,\ldots,L-1\}$, there exists a sequence $\hat{\varphi}_{\infty,K}^{(\ell)} \in L^2(\bbR)$ such that
\begin{equation}\label{eq:mu_ell_conv}
\sup_{K \leq 2^{-\ell-1} N} \|\hat{\varphi}_{N,K}^{(\ell)} - \hat{\varphi}_{\infty,K}^{(\ell)} \|_{L^2} \to 0
\quad\text{as } N\to\infty
,
\end{equation}
provided that the same holds for all smaller values of $\ell$.
\end{lemma}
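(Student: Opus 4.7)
The plan is to establish convergence of $\hat{\varphi}_{N,K}^{(\ell)}$ as $N\to\infty$ by a Cauchy argument in $L^2(\bbR)$, modeled on the proof of Lemma \ref{lem:conv_sigma} and Corollary \ref{corr:mu0_conv}. The engine is the recursion \eqref{eq:varphi_hat_expansion_2}, together with the contraction $\|\hat{Q}_{N,\xi}\| \leq \mu < 1$ from Lemma \ref{lem:Qnorm}, which reduces the problem to showing that all ``coefficients'' in the recursion converge uniformly in $\xi$ as $N\to\infty$.

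For $N' \geq N$ large, subtracting \eqref{eq:varphi_hat_expansion_2} at indices $N$ and $N'$ and using the triangle inequality gives the pointwise-in-$\xi$ bound
\[
d_{N,N',\xi-\half}
\;\leq\;
\mu\, d_{N,N',\xi+\half} + r_{N,N',\xi},
\]
where $d_{N,N',n} := \|\hat{\varphi}_{N,n}^{(\ell)} - \hat{\varphi}_{N',n}^{(\ell)}\|_{L^2}$ and $r_{N,N',\xi}$ collects (i) the operator discrepancy $(\hat{Q}_{N,\xi}-\hat{Q}_{N',\xi})[\hat{\varphi}_{N',\xi+\half}^{(\ell)}]$ and (ii) the $\hat{M}_{N,\xi}^{(\ell)}$-versus-$\hat{M}_{N',\xi}^{(\ell)}$ discrepancy acting on the lower-order terms $\hat{\varphi}^{(j)}$, $j<\ell$. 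To control (i) and (ii), I would combine Lemma \ref{lem:ass2} (uniform-in-$\xi$ convergence of $\nabla^{2+k} V$ evaluated at $(u^*_{N,\xi-\half},u^*_{N,\xi+\half})$), Lemma \ref{lem:conv_sigma} together with Corollary \ref{corr:mu0_conv} (uniform convergence of $\psi_{\sigma_{N,\xi+\half}}$), and Lemma \ref{lem:operator_norm_bound} to conclude that the integral kernels defining $\hat{Q}_{N,\xi}$ and the various $P_{N,\xi}^{(m)}$ inside $\hat{M}_{N,\xi}^{(\ell)}$ converge in $C_\gamma$ uniformly in $\xi$, hence in $L^2 \to L^2$ operator norm; the uniform Lipschitz continuity of $\hat{M}_{N,\xi}^{(\ell)}$ in its arguments (Lemma \ref{lem:elimination_operators}) combined with the induction hypothesis on $\hat{\varphi}^{(j)}$, $j<\ell$, then yields $\sup_{\xi} r_{N,N',\xi} =: r_{N,N'} \to 0$, uniformly over $\xi$ in any range where the induction hypothesis still applies.

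With this in hand I would apply Lemma \ref{lem:recursive_bound} starting the recursion from $n_0 := \lfloor 2^{-\ell} N\rfloor$: by the induction hypothesis applied at each level $j<\ell$ (note $2^{-j-1}\geq 2^{-\ell}$) each lower-order $\hat{\varphi}^{(j)}_{N,n}$ converges uniformly for $n\leq n_0$, and meanwhile $d_{N,N',n_0}$ is bounded by a constant thanks to Lemma \ref{lem:interation_boundness}. Lemma \ref{lem:recursive_bound} then yields $d_{N,N',n} \leq \mu^{n_0 - n}\, C + (1-\mu)^{-1} r_{N,N'}$ for $K_0\leq n\leq n_0$, so for $n \leq 2^{-\ell-1} N$ we have $n_0-n\geq 2^{-\ell-1} N\to\infty$ and both terms vanish as $N,N'\to\infty$. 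Completeness of $L^2$ then extracts the limit $\hat{\varphi}_{\infty,K}^{(\ell)}$ and gives \eqref{eq:mu_ell_conv}.

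The main obstacle I anticipate is the uniform-in-$\xi$ operator convergence of $\hat{M}_{N,\xi}^{(\ell)}$: these operators are constructed in Lemma \ref{lem:elimination_operators} through compositions of the $P_{N,\xi}^{(m)}$ from Lemma \ref{lem:varphi_tilde_expansion} with the nonlinear Taylor-inversion functionals from Lemma \ref{lem:inverse_expansion}, so one has to verify that each layer preserves uniform convergence in $\xi$; the inversion step in particular hinges on the uniform lower bound $\inf_{N,\xi}\nu_{N,\xi-\half}>0$ from Lemma \ref{lem:nv_xi_positivity} so that it is uniformly well-conditioned, and on the $\C^m$ regularity of the $F^{(\ell)}$ on the bounded-input set guaranteed by Lemma \ref{lem:inverse_expansion}(b).
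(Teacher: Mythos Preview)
Your proposal is correct and follows essentially the same route as the paper: a Cauchy estimate on $d_{N,N',n}$ via the recursion \eqref{eq:varphi_hat_expansion_2}, splitting the remainder into the $\hat{Q}$-operator discrepancy and the $\hat{M}^{(\ell)}$-discrepancy (the paper further splits the latter into a ``same operator, different arguments'' piece and a ``different operators, same arguments'' piece, calling the four resulting terms $T_1,\ldots,T_4$), then applying Lemma~\ref{lem:recursive_bound} from $n_0\approx 2^{-\ell}N$ down to $n\leq 2^{-\ell-1}N$. Your identification of the uniform-in-$\xi$ convergence of $\hat{M}_{N,\xi}^{(\ell)}$ as the main technical point, and your plan to propagate it through the $P_{N,\xi}^{(m)}$ kernels and the inversion functionals of Lemma~\ref{lem:inverse_expansion} using the uniform lower bound on $\nu_{N,\xi}$, matches the paper's five-step sketch for its term $T_4$.
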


To that end, using \eqref{eq:varphi_hat_expansion_2} and again adopting the convention that $N \leq N'$, estimate
\begin{align*}
\big\|
	\hat{\varphi}_{N,\xi-\half}^{(\ell)}
	-
	\hat{\varphi}_{N',\xi-\half}^{(\ell)}
\big\|
=~&
\Big\|
	\hat{Q}_{N,\xi} [\hat{\varphi}_{N,\xi+\half}^{(\ell)}]
	-
	\hat{Q}_{N',\xi} [\hat{\varphi}_{N',\xi+\half}^{(\ell)}]
	\\~&
	+
	\hat{M}_{N,\xi}^{(\ell)}[\hat{\varphi}_{N,\xi+\half}^{(0)},\ldots,\hat{\varphi}_{N,\xi+\half}^{(\ell-1)}]
	-
	\hat{M}_{N',\xi}^{(\ell)}[\hat{\varphi}_{N',\xi+\half}^{(0)},\ldots,\hat{\varphi}_{N',\xi+\half}^{(\ell-1)}]
\Big\|
\\ \leq ~&
	\big\|\hat{Q}_{N,\xi}
	\big(\hat{\varphi}_{N,\xi+\half}^{(\ell)} - \hat{\varphi}_{N',\xi+\half}^{(\ell)}\big)\big\|
	\\~&
	+
	\big\|
		\hat{Q}_{N,\xi}
		-
		\hat{Q}_{N',\xi}
	\big\|\,
	\big\|\hat{\varphi}_{N',\xi+\half}^{(\ell)}\big\|
	\\~&
	+
\big\|
	\hat{M}_{N,\xi}^{(\ell)}[\hat{\varphi}_{N,\xi+\half}^{(0)},\ldots,\hat{\varphi}_{N,\xi+\half}^{(\ell-1)}]
	-
	\hat{M}_{N,\xi}^{(\ell)}[\hat{\varphi}_{N',\xi+\half}^{(0)},\ldots,\hat{\varphi}_{N',\xi+\half}^{(\ell-1)}]
\big\|
	\\~&
	+
\big\|
	\hat{M}_{N,\xi}^{(\ell)}[\hat{\varphi}_{N',\xi+\half}^{(0)},\ldots,\hat{\varphi}_{N',\xi+\half}^{(\ell-1)}]
	-
	\hat{M}_{N',\xi}^{(\ell)}[\hat{\varphi}_{N',\xi+\half}^{(0)},\ldots,\hat{\varphi}_{N',\xi+\half}^{(\ell-1)}]
\big\|
\\ =: ~&
	{\rm T}_1+{\rm T}_2+{\rm T}_3+{\rm T}_4.
\end{align*}

\medskip \noindent {\it Bound on ${\rm T}_1$.}
$
{\rm T}_1
=
	\big\|\hat{Q}_{N,\xi}
	\big[\hat{\varphi}_{N,\xi+\half}^{(\ell)} - \hat{\varphi}_{N',\xi+\half}^{(\ell)}\big]\big\|
\leq
	\mu \big\|
	\hat{\varphi}_{N,\xi+\half}^{(\ell)} - \hat{\varphi}_{N',\xi+\half}^{(\ell)}
	\big\|.
$

\medskip \noindent {\it Bound on ${\rm T}_2$.}
By Lemma \ref{lem:Qnorm}, the operators $\hat{Q}_{N,\xi}$ and $\hat{Q}_{N',\xi}$ both approach ${\Pr}_{\psi_{\kappa}} \hat{P}_{\infty}$ as $N,N'\to\infty$ uniformly in $\xi<N/2$ ($\hat{P}_{\infty}$ is introduced in the proof of Lemma \ref{lem:Qnorm}).
This together with boundedness of $\big\|\hat{\varphi}_{N',\xi+\half}^{(\ell)}\big\|$ (which we proved in Section \ref{sec:finite_lattice}) proves that
\[
\sup_{\xi \leq N/2} {\rm T}_2 = o(1) + {\Pr}_{\psi_{\kappa}} \hat{P}_{\infty}\big[\hat{\varphi}_{N',\xi+\half}^{(\ell)}\big] - {\Pr}_{\psi_{\kappa}} \hat{P}_{\infty}\big[\hat{\varphi}_{N',\xi+\half}^{(\ell)}\big] \to 0
\quad \text{as } N,N'\to\infty.
\]

\medskip \noindent {\it Bound on ${\rm T}_3$.}
The fact that $\sup_{\xi \leq 2^{-\ell} N} {\rm T}_3 \to 0$, as $N,N'\to\infty$ follows from the uniform local Lipschitz regularity of $\hat{M}_{N,\xi}^{(\ell)}$ and boundedness of differences of its arguments for $\xi \leq 2^{-\ell} N$ (as provided by the induction assumption).

\medskip \noindent {\it Bound on ${\rm T}_4$.}
The bound on ${\rm T}_4$ is equally straightforward, but is more lengthly to derive.
It is derived in the following five steps whose details we omit:
\begin{itemize}
\item[(i)] Note that $\sigma_{N,\xi}$, $\psi_{\sigma_{N,\xi}}$, and the derivatives of $V$ evaluated at $(u^*_{N,\xi-\half},u^*_{N,\xi+\half})$ converge as $N\to\infty$ uniformly in $\xi<N/2$. This implies, in particular, that $p_{0,N,\xi} \to p_{\infty}$ uniformly in $\xi<N/2$.

\item[(ii)] The coefficients of polynomials $q_{N,\xi}^{(m)}(x,y)$ defined in the proof of Lemma \ref{lem:varphi_tilde_expansion} are linear combinations of the derivatives of $V$ evaluated at $(u^*_{N,\xi-\half},u^*_{N,\xi+\half})$ (cf.\ Lemma \ref{lem:Gaussian_expansion}(a)) and therefore $q_{N,\xi}^{(m)} p_{0,N,\xi}$ converge in $\C_{\tilde{\gamma}_V - \eps}(\bbR^2)$.
Hence $P_{0,N,\xi}^{(m)}$ converge uniformly in $\xi$ as provided by Lemma \ref{lem:operator_norm_bound}.

\item[(iii)] The linear operators $M_{N,\xi}^{(m)}$, defined in Lemma \ref{lem:varphi_tilde_expansion}, likewise converge uniformly in $\xi$ as $N\to\infty$.

\item[(iv)] The (nonlinear) operators $Z^{(\ell)}$ (see the proof of Lemma \ref{lem:elimination_operators}) were constructed from continuous operators and therefore are themselves continuous, once their arguments are restricted to a bounded region in $L^2$.

\item[(v)] Finally this implies that $\hat{M}_{N,\xi}^{(m)}$ converge as $N\to\infty$ uniformly in $\xi$ and their arguments, once they are constrained to a bounded region.
\end{itemize}

\medskip \noindent {\it Finalizing the estimate.}
Combining the bounds on all ${\rm T}_i$, we obtain
\begin{align*}
\big\|
	\hat{\varphi}_{N,\xi-\half}^{(\ell)}
	-
	\hat{\varphi}_{N',\xi-\half}^{(\ell)}
\big\|
\leq
\mu
\big\|
	\hat{\varphi}_{N,\xi+\half}^{(\ell)}
	-
	\hat{\varphi}_{N',\xi+\half}^{(\ell)}
\big\|
+ R_{N,N',\xi},
\end{align*}
where $R_{N,N',\xi}$ converges to $0$ uniformly in $\xi \leq 2^{-\ell} N$.
Applying Lemma \ref{lem:recursive_bound} to the last inequality, similarly as in the proof of Lemma \ref{lem:conv_sigma}, yields
$
\big\|
	\hat{\varphi}_{N,n}^{(\ell)}
	-
	\hat{\varphi}_{N',n}^{(\ell)}
\big\|
\to 0
$
uniformly in $n \leq 2^{-\ell-1} N$.

\subsection{Convergence of $H_N^{-1}$}\label{sec:infinite:4}

The final ingredient of the proof for the infinite lattice is the following lemma.
\begin{lemma}\label{lem:HNinv_conv}
For $K\in\bbN$,
$H_N^{-1}|_{\calU_K\to\calU_K} \to H_\infty^{-1}|_{\calU_K\to\calU_K}$ as $N\to\infty$.
\end{lemma}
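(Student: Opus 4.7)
The plan is to run a stability-compactness argument component by component. Since $\calU_K$ is finite-dimensional, it suffices to prove that for every canonical basis vector $e_j \in \calU_K$ (extended by zero to $\calU_N$ for $N>K$), the vector $w_N := H_N^{-1} e_j$ converges componentwise on $\calB_K$ to $w_\infty := H_\infty^{-1} e_j$.

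\textbf{Step 1 (uniform bound and weak compactness).} I will invoke the stability \eqref{eq:defect_stab} (also recorded as \eqref{eq:ass:E_convex}) together with Lemma \ref{lem:Enk_convex} applied at the equilibrium $u_N^*$ to obtain $\<H_N v,v\>\geq \tilde{\gamma}_E \|v\|_{\ell^2}^2$ uniformly in $N$, whence $\|H_N^{-1}\|_{\ell^2\to\ell^2}\leq \tilde{\gamma}_E^{-1}$ and in particular $\|w_N\|_{\ell^2(\calB_N)}\leq \tilde{\gamma}_E^{-1}$. Extending $w_N$ by zero to $\ell^2(\bbZ)$ and using Banach--Alaoglu, I extract a subsequence with $w_{N_k}\weakto w^*$ in $\ell^2(\bbZ)$.

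\textbf{Step 2 (identifying the weak limit).} For any finitely supported $v\in\ell^2(\bbZ)$ and $N_k$ large enough that $\supp(v)\subset\calB_{N_k-1}$, the boundary term in $H_{N_k}$ does not touch the support of $v$, so the entries of $H_{N_k}v$ at fixed positions are determined by $\ddel V(u^*_{N_k,\xi-\half},u^*_{N_k,\xi+\half})$ and $\ddel\calP(u^*_{N_k})$ at finitely many points. By Lemma \ref{lem:zero_temp_approx} these arguments converge in $\ell^2$, and the $\C^3$ regularity of $V$ and $\calP$ (together with the locality of $\calP$ on $\calB_{K_0}$) yield $H_{N_k}v\to H_\infty v$ strongly in $\ell^2$. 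Passing to the limit in $\<e_j,v\>=\<H_{N_k}w_{N_k},v\>=\<w_{N_k},H_{N_k}v\>$ gives $\<H_\infty w^*,v\>=\<e_j,v\>$ for a dense set of $v$'s, hence $H_\infty w^* = e_j$. Assumption \eqref{eq:defect-global} makes $H_\infty$ positive definite and thus invertible, so $w^* = H_\infty^{-1} e_j = w_\infty$.

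\textbf{Step 3 (full-sequence convergence).} Uniqueness of the subsequential weak limit upgrades Step 2 to $w_N \weakto w_\infty$ in $\ell^2(\bbZ)$ along the full sequence. Weak convergence in $\ell^2$ implies componentwise convergence, which on the finite-dimensional restriction $\calU_K$ coincides with norm convergence; applying this to each basis vector $e_j$ of $\calU_K$ yields the stated limit $H_N^{-1}|_{\calU_K\to\calU_K} \to H_\infty^{-1}|_{\calU_K\to\calU_K}$.

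The main obstacle is Step 2: verifying $H_{N_k}v\to H_\infty v$ strongly for a fixed finite-support $v$. This needs careful bookkeeping of three contributions—the defect term $\calP$ (handled by its localization on $\calB_{K_0}$), the bulk nearest-neighbor $V$-terms (handled by Lemma \ref{lem:zero_temp_approx} and the uniform decay corollary $\sup_{N\geq n}|u^*_{N,n}|\to 0$), and the disappearance of the boundary term $(V^{\bc,(0)})_{xx}(u^*_{N,\pm N})$ from the relevant matrix entries once $N$ exceeds $\supp(v)$. Everything else is essentially linear-algebraic.
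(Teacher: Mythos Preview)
Your proof is correct but takes a genuinely different route from the paper's. The paper argues directly: for $f\in\calU_K$ it sets $v_N=H_N^{-1}f$, $v_\infty=H_\infty^{-1}f$, and shows $\|v_N-(v_\infty|_{\calU_N})\|_{\ell^2}\to 0$ by a consistency--stability estimate. It expands $\<H_N(v_N-v_\infty|_{\calU_N}),w\>$ as the sum of a boundary term (controlled by the decay $|v_{\infty,\pm N}|\to 0$) and a perturbation term (controlled by $\|u^*_N-u^*_\infty|_{\calB_N}\|\to 0$ via the bounded third derivative of $E_{0,N}$), then divides through by the uniform coercivity constant. You instead use weak compactness: uniformly bound $w_N$, extract a subsequential weak limit, and identify it by testing against finitely supported $v$ using $H_Nv\to H_\infty v$ strongly for such $v$. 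The paper's argument is more quantitative---it yields strong $\ell^2$ convergence on all of $\calB_N$ and could produce a rate---while yours is softer but avoids having to exhibit $v_\infty$ in advance and sidesteps the explicit boundary-term estimate; the ``boundary disappears'' simply because finitely supported test vectors eventually miss it.

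One cosmetic remark on Step~1: the references \eqref{eq:ass:E_convex} and Lemma~\ref{lem:Enk_convex} are statements about the energy, not the Hessian. The Hessian lower bound you actually need is \eqref{eq:defect_stab}, transferred from $u^*|_{\calB_N}$ to $u^*_N$ using $\|u^*_N-u^*|_{\calB_N}\|\to 0$ and the uniform $\C^3$ bound on $E_{0,N}$---exactly as the paper notes at the end of its proof.
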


\begin{proof}[Proof of Lemma \ref{lem:HNinv_conv}]
Due to equivalence of the weak and strong convergence in finite dimensions, it is sufficient to prove $H_N^{-1}|_{\calU_K\to\calU_K} \weakto H_\infty^{-1}|_{\calU_K\to\calU_K}$.
To that end, let $f\in\calU_K$ and consider the following two problems for $v_N$ and $v_\infty$,
\begin{align}
\<\ddel E_{0,N}(u^*_N) v_N, w\>
=~&
(f, w)_{\calU_N}
\quad \forall w\in\calU_N
,
\\
\<\ddel E_\infty(u^*_\infty) v_\infty, w\>
=~&
(f, w)_{\calU_\infty}
\quad \forall w\in\calU_\infty
.
\end{align}
(We note that $\ddel E_\infty(u^*_\infty)$ is coercive, thanks to \eqref{eq:defect-global}.)
The assertion of the lemma then follows from the estimate
\begin{equation}\label{eq:HNinv_conv_to_prove}
\|v_N - (v_\infty|_{\calU_N})\| \leq o(1).
\end{equation}

We use the standard consistency-stability argument:
\begin{align*}
\<\ddel E_{0,N}(u^*_N) (v_N - v_\infty|_{\calU_N}), w_N\>
=~&
(f, w_N) -
\<\ddel E_{0,N}(u^*_N) (v_\infty|_{\calU_N}), w_N\>
\\=~&
\<\ddel E_\infty(u^*_\infty) v_\infty, w_N\>
-
\<\ddel E_{0,N}(u^*_N) (v_\infty|_{\calU_N}), w_N\>
\\=~&
	\<\ddel E_\infty(u^*_\infty) v_\infty, w_N\>
	-
	\<\ddel E_{0,N}(u^*_\infty|_{\calU_N}) (v_\infty|_{\calU_N}), w_N\>
	\\~&+
	\<(\ddel E_{0,N}(u^*_\infty|_{\calU_N}) - \ddel E_{0,N}(u^*_N)) (v_\infty|_{\calU_N}), w_N\>
\\=:~& {\rm T}_1 + {\rm T}_2.
\end{align*}
Thanks to $\|u^*_N - (u^*_\infty|_{\calU_N})\| \to 0$ and regularity of $E_{0,N}$, we have that
\[
|{\rm T}_2| \lesssim \|(u^*_\infty|_{\calU_N}) - u^*_N\|_{\ell^\infty} \, \|v_\infty|_{\calU_N}\| \, \|w_N\|
\leq
o(1)
\|v_\infty\| \, \|w_N\|
.
\]

Next, we note that ${\rm T}_1$ contains only the boundary terms (note that $w_N\in\calU_N$) and we estimate them using the decay of $v_{\infty,n}$ as $n\to\pm\infty$:
\begin{align*}
|{\rm T}_1|
=~&
	\bigg|
	\sum_{\xi\in\{-N-\half,N+\half\}}
	\ddel V(u^*_{\infty,\xi-\half}, u^*_{\infty,\xi+\half})
			(v_{\infty,\xi-\half}, v_{\infty,\xi+\half}),
			(w_{N,\xi-\half}, w_{N,\xi+\half})
		\>
	\bigg|
\\ \leq ~&
	C \|w\|\,
	\sum_{n\in\{-N-1,-N,N,N+1\}}
	|v_{\infty,n}|
	= o(1) \|w_N\|
.
\end{align*}

Finally, employing the stability of $\ddel E_{0,N}(u^*_N)$ which follows from \eqref{eq:defect_stab} and $\|u^*_N-u^*_\infty|_{\calB_N}\| \to 0$, we get \eqref{eq:HNinv_conv_to_prove}.
\end{proof}

\subsection{Finalizing the Proof of Theorem \ref{th:uniform_expansion}}\label{sec:infinite:5}

\noindent {\it Proof of (b).}
Lemma \ref{lem:mu_orig_conv} guarantees the convergence of the left-hand side of \eqref{eq:expansion_main} for all $\theta>0$.
Corollary \ref{corr:mu0_conv} together with Lemma \ref{lem:conv_sigma} prove the convergence of $\mu_{N}^{(0)}$, and Lemma \ref{lem:mu_ell_conv} then yields the convergence of all $\mu_{N}^{(\ell)}$, $\ell<L$.
Therefore, the remainder in \eqref{eq:expansion_main}, $\theta^{L/2} \mu_{\theta,N}^{(L/2^+)}$, also convergences for any fixed $\theta>0$.

We remark that the above argument does not yield the uniform convergence of $\mu_{\theta,N}^{(L/2^+)}$ in $\theta$, which we expect to be true, possibly under additional regularity assumptions.
However, this will not be important for the applications we consider.

\medskip \noindent {\it Proof of (c, $N=\infty$).}
This follows directly from (c, $N<\infty$) proved in Section \ref{sec:finite_lattice}.

\medskip \noindent {\it Proof of (d, $N=\infty$).}
Thanks to the convergence of $u^*_N$ on $\calU_K$ for any fixed $K$, $A(u^*_N)$ and all terms in \eqref{eq:mu1_explicit} except for $H_N^{-1} = (\ddel E_{0,N}(u^*_N))^{-1}$ converge as $N\to\infty$, namely
\begin{align*}
\dddel E_{0,N}(u_N^*) \to \dddel E_\infty(u_\infty^*)
,\qquad
\del^k A(u_N^*) \to \del^k A(u_\infty^*),
\quad k=0,1,2
,
\end{align*}
and the convergence of $H_N^{-1}$ is established in Lemma \ref{lem:HNinv_conv}.
\qed

\section{Discussion and Conclusion}\label{sec:discussion-and-conclusion}

We presented a theory of calculation of a crystalline defect at finite temperature and illustrated how to use this theory to compare the performance of different computational methods.

The theory is rigorously justified in 1D for the free boundary conditions.
We claim that the techiques used in this work can be extended to wider interaction range or Dirichlet boundary conditions.
However, it is not clear to us how difficult it would be to extend this theory to two or three dimensions; nevertheless, we see no reasons why such a theory would fail in those cases.
Therefore, we propose to use this theory in the following way.
One could simply start by \emph{conjecturing} that a result similar to Theorem \ref{th:uniform_expansion} holds in a particular situation.
Such a conjecture would yield a tool to quantify the errors in approximating the respective Gibbs measures for different methods and propose better methods in a more systematic way.

In particular, in dimension two or three, the structure of infinite-dimensional Gibbs measures seems much more complicated than in one dimension, while the main difficulty in analyzing $\<\mu_{\theta,\infty}^{(1)}, A\>$ would be to deal with the inverse of Hessian operators that by now are quite well understood in the context of the zero-temperature theory.

Furthermore, in dimension two or three, we expect that the hot-QC method would yield a qualitative advantage over a simple atomistic calculation as opposed to the 1D case.
Indeed, in 1D a perturbation generated by a boundary condition decays exponentially with the same rate for different methods (namely, $\sim \lambda^n$).
However perturbations in higher dimensions decay algebraically and, at least in the zero-temperature case, different methods have errors that decay with different algebraic rates \cite{EhrlacherOrtnerShapeev2013preprint, LuskinOrtner2013acta}.

\appendix
\section{Expansions}\label{sec:expansion_proofs}

This section proves expansion of Laplace-like integrals and their bounds in $\bbR^n$.
Note that the constant in such bounds depend on $n$, but we will not always mention this dependence.

\subsection*{Proof of Lemma \ref{lem:Gaussian_expansion}}
In this proof $C$ denotes a generic constants that may depend only on $L$, $\gamma$, and $c$.

{\it Proof of (a).}
Note that \eqref{eq:Gaussian_expansion:Ezero} implies that $\del E^{(0)}(0)=0$.

The Taylor expansion of $E^{(m)}(v)$ yields
\[
\bigg|\theta^{m-1} E^{(m)}(v) -
\theta^{m-1} \sum_{k=0}^{M-1} P^{(m,k)}(v)
\bigg|
\leq
C \theta^{m-1} |v|^M
\qquad \forall v:|v|\leq r
\]
where we let $M = M_m = 3-2m+L$ and where $P^{(m,k)}(v)$ are the respective homogeneous polynomials of degree $k$, $P^{(0,2)}(v) = \frac12\<H v,v\>$ and $P^{(1,0)}(v) = E^{(1)}(0)$.

Next, we restrict $v$ to the ball $\|v\| \leq \theta^{-\alpha}$, where we choose $0<\alpha\leq 1/(6L+6)$.
Then a simple calculation shows that $\theta^{m-1} P^{(m,k)}(\sqrt{\theta} v)$, in the case $m-1+\frac{k}{2}>0$, can be bounded by $C \theta^{-\mu(m-1+k/2)}$ and $\theta^{m-1} |\sqrt{\theta} v|^M$ can be estimated by $C \theta^{-\mu (L/2+\frac12)}$ with $\mu=\frac{1}{L+1}$.

Hence we can combine the above expansions into
\begin{equation}\label{eq:expansion:preexp}
\theta^{-1} E_\theta(\sqrt{\theta} v) -
\smfrac12 \<H v, v\> + E^{(1)}(0) - E^{(1+L/2^+)}_\theta(\sqrt{\theta} v)
=
\sum_{\ell=1}^{L} \theta^{\ell/2} P^{(\ell)}(v) +
\theta^{L/2+\frac12} P_\theta^{(1+L^+)}(v),
\end{equation}
where $P^{(\ell)}$ are polynomials with coefficients being linear combinations of $\del^k E^{(\ell)}(0)$ and
\[
\sup_{|v| \leq \theta^{-\alpha}}
|P^{(\ell)}(v)\big| \leq C \theta^{-\mu\frac{\ell}{2}},
\qquad
\sup_{|v| \leq \theta^{-\alpha}}
\big|P^{(1+L^+)}_{\theta}(v)\big|
\leq C \theta^{-\mu({L/2+\frac12})}
.
\]
Hence one can apply Lemma \ref{lem:composition_expansion} with $z=\sqrt{\theta}$, $g(\varphi)=e^{-\varphi}$, and $f$ given by \eqref{eq:expansion:preexp} to obtain
\begin{align} \notag
\tilde{Q}_{\theta}(v)
:=~&
e^{-\theta^{-1} [E_\theta(\sqrt{\theta} v)
]
}
\,
e^{\frac12 \<H v, v\>}
\,
e^{E^{(1)}(0)}
\,
e^{\theta^{L/2} E^{(1+L/2^+)}(\sqrt{\theta}v)}
\\=~& \label{eq:Gaussian_expansion:exp}
\sum_{\ell=0}^{L} \theta^{\ell/2} \tilde{Q}_u^{(\ell)}(v)
+ \theta^{L/2+\frac12} \tilde{Q}_{\theta,u}^{(1+L^+)}(v)
,
\end{align}

Notice that $\tilde{Q}_u^{(\ell)}(-v) = (-1)^\ell \tilde{Q}_u^{(\ell)}(v)$ are some polynomials (follows from the fact that $P^{(\ell,k)}(v)$ are polynomials with a similar property), and $\tilde{Q}_u^{(0)} \equiv 1$ follows from tracking the leading-order term in the respective Taylor expansions.
Finally, \eqref{eq:composition_residual_est} gives the bound
\[
\big|\tilde{Q}^{(1+L^+)}_{\theta,u}(v)\big|
\leq C \theta^{\frac12} \theta^{-\mu(L/2+\frac12)} = C
\qquad \forall v:|v| \leq \theta^{-\alpha}
.
\]
We next expand the last term in the left-hand side of \eqref{eq:Gaussian_expansion:exp} and notice that $\theta^{L/2} E^{(1+L/2^+)}(\sqrt{\theta}v)$ is bounded by $\theta^{L/2}$, thanks to \eqref{eq:Gaussian_expansion:remainder_est}.
Dividing by $e^{E^{(1)}(0)}$ yields \eqref{eq:Gaussian_expansion_predef}, and the rest of the properties in part (a) are obvious.

{\it Proof of (b).}
Note that all $G^{(\ell)}$ are polynomials whose coefficients are bounded by $C$, hence \eqref{eq:Gaussian_expansion:terms} follows.

Next, note
\[
|G_\theta^{(L^+)}(v)|
\leq C
\qquad \forall v:|v| \leq \theta^{-\alpha}
.
\]
Hence one can show \eqref{eq:Gaussian_expansion:remainder} for $|v| \leq \theta^{-\alpha}$:
\[
\big|G_\theta^{(L^+)}(v) e^{-\frac12\<H v,v\>}\big|
\leq
C e^{-\frac{\gamma}{2} \|v\|^2}
\leq
C e^{-\frac{\gamma-\eps}{2} \|v\|^2}
\qquad \forall v:|v| \leq \theta^{-\alpha}
.
\]

Now consider $|v| > \theta^{-\alpha}$.
Then we can estimate the remainder directly from \eqref{eq:Gaussian_expansion_def}:
\begin{align*}
|G_\theta^{(L^+)}(v) e^{-\frac12\<H v,v\>}|
\leq~&
\theta^{-L/2} e^{-\theta^{-1} E_\theta(\sqrt{\theta} v)}
+
\sum_{\ell=0}^{L-1} \theta^{\ell/2-L/2} \big|G^{(\ell)}(v)\big| e^{-\frac12\<H v,v\>}
\end{align*}
by noting that the polynomials $G^{(\ell)}(v)$ can be estimated by $e^{\frac{\eps}{4} \|v\|^2}$, and hence
\begin{align*}
|G_\theta^{(L^+)}(v) e^{-\frac12\<H v,v\>} e^{\frac{\gamma-\eps}{2} \|v\|^2}|
\leq~&
C \theta^{-L/2} e^{-\frac{\eps}{2} \|v\|^2}
+
C \sum_{\ell=0}^{L-1} \theta^{\ell/2-L/2} e^{-(\frac{\eps}{2}-\frac{\eps}{4}) \|v\|^2}
\\ \leq~&
C \theta^{-L/2} e^{-\frac{\eps}{2} \theta^{-2\alpha}}
+
C \sum_{\ell=0}^{L-1} \theta^{\ell/2-L/2} e^{-\frac{\eps}{4} \theta^{-2\alpha}}
\qquad \forall v:|v| > \theta^{-\alpha}
\end{align*}
which approaches zero exponentially as $\theta\to 0$, and hence bounded uniformly in $\theta \in (0,\theta_0]$. \qed

\subsection*{Proof of Lemma \ref{lem:expansion_generic}}

{\it Proof of (a) and (b).}
Lemma \ref{lem:Gaussian_expansion} gives us the expansion \eqref{eq:Gaussian_expansion_def}.
We also use the expansion for $F_\theta(v)$ and
\[
A(\sqrt{\theta} v) =
	\sum_{\ell=0}^{L-1}
		\smfrac{1}{\ell !}
		\theta^{\ell/2}\,
		\del^\ell\! A(0)[v^{\otimes\ell}]
		+
		\theta^{L/2}\,A_\theta^{(L^+)}(v),
\]
where each term, $\del^\ell\! A(0)[v^{\otimes\ell}]$, and the remainder, $A_\theta^{(L^+)}(v)$, have at most polynomial growth at infinity at a rate bounded by $\|A\|_{\C^{L}}$.
Multiplying these expansions yields
\begin{align*}
\hat{F}_\theta(u)
=~& F_\theta(u) e^{-\theta^{-1} E_\theta(\sqrt{\theta}u)}
\\
=~&
\sum_{\ell=0}^{L-1} \theta^{\ell/2}
	\sum_{k=0}^{\ell} F^{(k)}(v) Q^{(\ell-k)}(v) e^{-\frac12 \<H v,v\>}
	+ \theta^{L/2} \hat{F}_\theta^{(L^+)}(v)
\\ =:~&
\sum_{\ell=0}^{L-1} \theta^{\ell/2} \hat{F}^{(\ell)}(v)	+ \theta^{L/2} \hat{F}_\theta^{(L^+)}(v)
,\qquad\qquad\text{and}
\\
\tilde{F}_\theta(A; u)
=~& A(\sqrt{\theta} v) \hat{F}_\theta(u)
\\
=~&
\sum_{\ell=0}^{L-1}
	\smfrac{1}{\ell !}
	\theta^{\ell/2}
	\sum_{k=0}^{\ell} \del^{\ell} A(0)[v^{\otimes\ell}] \hat{F}^{(\ell-k)}(v)
	+ \theta^{L/2} \tilde{F}_\theta^{(L^+)}(A; v)
\\ =:~&
\sum_{\ell=0}^{L-1} \theta^{\ell/2} \tilde{F}^{(\ell)}(A; v)	+ \theta^{L/2} \tilde{F}_\theta^{(L^+)}(A; v)
.
\end{align*}
It should also be noted that $\hat{F}^{(\ell)}(-v) = (-1)^{\ell} \hat{F}^{(\ell)}(v)$ and $\tilde{F}^{(\ell)}(A; -v) = (-1)^{\ell} \tilde{F}^{(\ell)}(A; v)$.

One can then apply Lemma \ref{lem:Gaussian_expansion}(b) with $\eps = \frac{\gamma-\gamma'}{4}$ to bound uniformly in $\theta$:
\begin{align*}
\|\hat{F}^{(\ell)}\|_{L^1}
\leq
C \|\hat{F}^{(\ell)}\|_{L^2_{-\eps}}
\leq
C \sum_{k=0}^{\ell}
	\|F^{(k)}\|_{L^2_{-\gamma'}}
	\|Q^{(\ell-k)} e^{-\frac12 \<H \bullet,\bullet\>}\|_{L^2_{\gamma-2\eps}}
\leq C,
\end{align*}
where we used \eqref{eq:Gaussian_expansion:terms} in the last step.
Similarly one can show $\|\hat{F}_\theta^{(L^+)}\|_{L^1} \leq C$.

We can likewise bound the terms in the numerator,
\[
\|\tilde{F}^{(\ell)}(A)\|_{L^1} \leq \|A\|_{\C^{L}} \, C,
\quad
\|\tilde{F}_\theta^{(L^+)}(A)\|_{L^1} \leq \|A\|_{\C^{L}} \, C,
\]
where $C$ depends on $\gamma$, $\gamma'$, $L$, and $c$.
Hence the expansion of both the denominator and numerator of \eqref{eq:expansion_generic:def} exists,
\begin{equation}\label{eq:muEF}
\<\mu_{\theta,E,F}, A \>
=
	\frac{%
		\tilde{f}_\theta(A)
	}{%
		\hat{f}_\theta
	}
=
	\frac{%
		\sum_{\ell=0}^{L-1} \theta^{\ell/2} \tilde{f}^{(\ell)}(A) + \theta^{L/2} \tilde{f}_\theta^{(L^+)}(A)
	}{%
		\sum_{\ell=0}^{L-1} \theta^{\ell/2} \hat{f}^{(\ell)} + \theta^{L/2} \hat{f}_\theta^{(L^+)}
	}
,
\end{equation}
with $\hat{f}^{(\ell)} = \int_{\bbR^n} \hat{F}^{(\ell)} \dv$, $\tilde{f}^{(\ell)} = \int_{\bbR^n} \tilde{F}^{(\ell)} \dv$ and $\hat{f}^{(L^+)}_\theta,\tilde{f}^{(L^+)}_\theta$ defined similarly, where all terms are uniformly bounded above by, respectively, $C$ and $\|A\|_{\C^{L}} \, C$.

To show uniform boundedness of $\hat{f}_\theta^{-1}$, we first note that $\|\ddel E^{(0)}\| \leq c$ (this is a consequence of \eqref{eq:Gaussian_expansion:Eell_norm}) yields a bound $\|H\| \leq c$.
This together with the second bound in \eqref{eq:expansion_generic:first_term_ass} yields a uniform bound on $\hat{f}^{(0)}$ above $0$:
\[
\hat{f}^{(0)} =
\int_{\bbR^n} F^{(0)}(u) e^{-E^{(1)}(0)} e^{-\frac12 \<H u, u\>} \du
\geq
\int_{B_{c^{-1}}} c^{-1} e^{-E^{(1)}(0)} e^{-\frac12 c \|u\|^2} \du
>0
.
\]
Hence Lemma \ref{lem:inverse_expansion} guarantees existence of the expansion of $\hat{f}_\theta^{-1}$, and uniform boundedness of the terms, for $\theta \leq \theta_0$ for some $\theta_0 = \theta_0(\gamma, \gamma', L, c)$, where the bound also depends only on $\gamma$, $\gamma'$, $L$, and $c$.
Hence the expansion terms and remainder in $\tilde{f}_\theta(A) \hat{f}_\theta^{-1}$ are likewise bounded.

Finally, it is obvious that each term of $\tilde{f}_\theta(A) \hat{f}_\theta^{-1}$ depends linearly on $A$, and noticing that the odd terms cancel, concludes the proof of parts (a) and (b).

{\it Proof of (c).}
The fact that $\mu_{E,F}^{(m)}$ is a linear combination of Dirac delta and its derivatives follows from the explicit expressions of $\tilde{F}^{(\ell)}$.
Next,
\[
\<\mu_{E,F}^{(0)}, A\>
= \frac{\tilde{f}^{(0)}}{\hat{f}^{(0)}}
= \frac{A(0) \hat{f}^{(0)}}{\hat{f}^{(0)}}
= A(0).
\]
Finally, the expression for $\<\mu_{E,F}^{(1)}, A\>$ follow from Lemma \ref{lem:expansion_aux} which we formulate and prove below.
Indeed, Lemma \ref{lem:expansion_aux} gives the expressions for $\tilde{f}^{(2)}$ and $\hat{f}^{(2)}$ (the latter can be retrieved from the former by setting $A\equiv 1$) and we get
\begin{align*}
\<\mu_{E,F}^{(1)}, A\>
=~&
\frac{\tilde{f}^{(2)}}{\hat{f}^{(0)}} - \frac{\tilde{f}^{(0)} \hat{f}^{(2)}}{(\hat{f}^{(0)})^2}
\\=~&
\Big[
G(E^{(0)},E^{(1)},E^{(2)}) A(0)
+
\smfrac12 \ddel A(0) \!:\! H^{-1}
\\&
~-
\del A(0) \cdot H^{-1} \big(
	\del E^{(1)}(0)+\smfrac12\dddel E^{(0)}(0) \!:\! H^{-1}
\big)
\Big]
\\&
-
\Big[A(0) G(E^{(0)},E^{(1)},E^{(2)})\Big],
\end{align*}
which upon canceling similar terms concludes the proof of (c).

{\it Proof of (d).}
We have that for $\ell=0, 1, \ldots, L-1$,
\[
\bar{f}^{(\ell)}
= \int \hat{F}^{(\ell)} (v) \dv
= \sum_{k=0}^{\ell} \int F^{(k)}(v) Q^{(\ell-k)}(v) e^{-\frac12 \<H v,v\>} \dv,
\]
which is clearly a continuous function of $F^{(k)} \in L^2_{-\gamma'}$ and $Q^{(k)}(v) e^{-\frac12 \<H v,v\>}$ ($k \leq \ell$), and the latter are continuous functions of $\del^k E^{(m)}(0)$ ($k+2 m\leq 2\ell+2$).
Likewise, for a fixed $A$, each $\tilde{f}^{(\ell)}$ is a continuous function of the same arguments.
The stated continuity of \eqref{eq:muEF} hence follows.
\qed

\begin{lemma}\label{lem:expansion_aux}
In the notations of Lemma \ref{lem:expansion_generic} let $F(\theta, u) \equiv 1$ and assume $L>2$.
Then
\begin{align} \notag
\frac{\tilde{f}^{(2)}}{\hat{f}^{(0)}}
=~&
G(E^{(0)},E^{(1)},E^{(2)}) A(0)
+
\smfrac12 \ddel A(0) \!:\! H^{-1}
\\&
-
\del A(0) \cdot H^{-1} \big(
	\del E^{(1)}(0)+\smfrac12\dddel E^{(0)}(0) \!:\! H^{-1}
\big)
.
\label{eq:lem:expansion_aux_g2}
\end{align}
where $G$ is some function of the respective tensors.
\end{lemma}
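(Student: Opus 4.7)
The plan is to unpack the expansions of $Q^{(0)}, Q^{(1)}, Q^{(2)}$ given implicitly in Lemma \ref{lem:Gaussian_expansion}, multiply against the Taylor expansion of $A(\sqrt\theta v)$ to extract the $\theta^1$ coefficient $\tilde{F}^{(2)}(A;v)$, and then evaluate the resulting Gaussian integrals via Wick's theorem. Since only $\tilde{f}^{(2)}/\hat{f}^{(0)}$ matters and the goal is to display the dependence on $A$, any part of $\tilde{f}^{(2)}/\hat{f}^{(0)}$ proportional to $A(0)$ alone can be absorbed into the unspecified function $G(E^{(0)},E^{(1)},E^{(2)})$.

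First, expanding $\theta^{-1} E_\theta(\sqrt\theta v)$ up to order $\theta$ gives
\[
\theta^{-1}E_\theta(\sqrt\theta v) - \tfrac12\<Hv,v\> - E^{(1)}(0) = \sqrt\theta\, R_1(v) + \theta\, R_2(v) + O(\theta^{3/2}),
\]
with $R_1(v) = \tfrac{1}{6}\dddel E^{(0)}(0)[v^{\otimes 3}] + \<\del E^{(1)}(0),v\>$ and $R_2(v)$ a polynomial in $v$ whose coefficients depend only on $\del^k E^{(m)}(0)$ with $k+2m\le 4$. Exponentiating and collecting powers of $\sqrt\theta$ gives $Q^{(0)}=e^{-E^{(1)}(0)}$, $Q^{(1)}(v)=-e^{-E^{(1)}(0)}R_1(v)$, and $Q^{(2)}(v)=e^{-E^{(1)}(0)}\bigl(\tfrac12 R_1(v)^2 - R_2(v)\bigr)$. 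Together with $A(\sqrt\theta v)=A(0)+\sqrt\theta\,\<\del A(0),v\>+\tfrac{\theta}{2}\<\ddel A(0)v,v\>+\cdots$, this yields
\[
\tilde{F}^{(2)}(A;v) = \Bigl(A(0)Q^{(2)}(v) + \<\del A(0),v\>Q^{(1)}(v) + \tfrac12\<\ddel A(0)v,v\>Q^{(0)}\Bigr) e^{-\frac12\<Hv,v\>}.
\]

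Second, I will evaluate the three integrals against the Gaussian weight $e^{-\frac12\<Hv,v\>}$, using that (writing $G=H^{-1}$) Wick's theorem gives $\mathbb{E}[v_i v_j] = G_{ij}$ and $\mathbb{E}[v_iv_jv_kv_l] = G_{ij}G_{kl}+G_{ik}G_{jl}+G_{il}G_{jk}$, while all odd moments vanish. The $\ddel A(0)$ term integrates directly to $\tfrac12\hat{f}^{(0)}\,\ddel A(0)\!:\!H^{-1}$. For the $\<\del A(0),v\>Q^{(1)}(v)$ term, the $\<\del A(0),v\>\<\del E^{(1)}(0),v\>$ piece integrates to $\hat{f}^{(0)}\,\del A(0)\cdot H^{-1}\del E^{(1)}(0)$, and the cubic piece uses the full symmetry of $\dddel E^{(0)}(0)$: all three Wick contractions agree and give $\mathbb{E}[\<\del A(0),v\>\dddel E^{(0)}(0)[v^{\otimes 3}]] = 3\,\del A(0)\cdot H^{-1}\bigl(\dddel E^{(0)}(0):H^{-1}\bigr)$, so together with the factor $\tfrac16$ in $R_1$ and the sign, the $Q^{(1)}$ contribution equals $-\hat{f}^{(0)}\,\del A(0)\cdot H^{-1}\bigl(\del E^{(1)}(0)+\tfrac12\dddel E^{(0)}(0):H^{-1}\bigr)$. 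Finally, the $A(0)Q^{(2)}(v)$ term contributes $A(0)\,\hat{f}^{(2)}/\hat{f}^{(0)}\cdot\hat{f}^{(0)}$, a quantity depending only on $E^{(0)},E^{(1)},E^{(2)}$ and their derivatives at $0$; this is exactly the content of $G(E^{(0)},E^{(1)},E^{(2)})\,A(0)$.

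The computation is essentially bookkeeping once the Wick identities and symmetries are set up, so no single step is hard; the only place one must be careful is the combinatorial symmetrization of the cubic Wick contraction, since it is what produces the clean factor of $\tfrac12$ in front of $\dddel E^{(0)}(0):H^{-1}$ and ensures the formula matches \eqref{eq:mu1_explicit}. Collecting the three contributions and dividing by $\hat{f}^{(0)}$ yields \eqref{eq:lem:expansion_aux_g2}.
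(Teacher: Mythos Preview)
Your proof is correct and follows essentially the same route as the paper: compute $\tilde F^{(2)}(A;v)$ by multiplying the low-order $Q^{(\ell)}$ against the Taylor expansion of $A$, then evaluate the resulting Gaussian integrals. The paper phrases the final integration as ``diagonalize $H$, integrate, contract,'' while you invoke Wick's theorem directly; these are equivalent, and your treatment of the cubic contraction (yielding the factor $3\cdot\tfrac16=\tfrac12$) is exactly the combinatorics the paper leaves implicit.
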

\begin{proof}

We have that
\begin{align*}
\tilde{f}^{(2)} =~&
\int_{\bbR^n} \tilde{F}^{(2)}(u) e^{-E^{(1)}(0)-\frac12 \<H u,u\>} \du.
\end{align*}
To calculate $\tilde{F}^{(2)}(u)$, we introduce tensors
\[
e_{i_1,\ldots,i_k}^{(\ell,k)} := \frac{\partial^k}{\partial u_{i_1}\ldots\partial u_{i_k}} E^{(\ell)}(0)
\qquad\text{and}\qquad
a_{i_1,\ldots,i_k}^{(k)} := \frac{\partial^k}{\partial u_{i_1}\ldots\partial u_{i_k}} A(0)
.
\]

A direct calculation (in which we adopt the repeated index summation convention) yields
\begin{align*}
\tilde{F}^{(2)}(u)
=~&
N(u) +
R(E^{(0)},E^{(1)},E^{(2)},u)
a^{(0)}
+
\smfrac12 a_{i,j}^{(2)} u_i u_j
\\&-
a_{i}^{(1)} e_{j}^{(1,1)} u_i u_j
-
\smfrac16
a_{i}^{(1)} e_{j,k,m}^{(0,3)} u_i u_j u_k u_m
\end{align*}
where $N(u)$ is some odd function of $u$, and $R(E^{0},E^{1},E^{2},u)$ is a function of $u$ that has a form of products of tensors $e_{i_1,\ldots,i_k}^{(\ell,k)}$ for $\ell=0,1,2$ and a vector $u$.

Another direct calculation consisting of (i) a linear change of variables diagonalizing $\frac12 \<H u,u\>$, (ii) taking integrals, and (iii) contracting the tensor products yields \eqref{eq:lem:expansion_aux_g2}.
\end{proof}

\section{Composition of series}\label{sec:composition}

\begin{lemma}\label{lem:composition_expansion} 
Let $f=f(z)$ be a polynomial
\begin{equation}\label{eq:composition_expansion_original}
f(z) = \sum_{\ell=0}^{L+1} z^{\ell} f^{(\ell)},
\end{equation}
for some $L\in\bbN$, let $g(\varphi)$ be an analytic function in $\{\varphi\in\bbC : |\varphi-f^{(0)}|<R\}$ and assume that $|f(z)-f^{(0)}| < R$ for $z\in[0,z_0]$.
\begin{itemize}
\item[(a)]
$G(z) := g(f(z))$ is an analytic function on $[0,z_0]$ with the following finite Taylor expansion,
\begin{equation}\label{eq:composition_expansion}
G(z) = \sum_{\ell=0}^{L-1} z^{\ell} G^{(\ell)}[f^{(1)},\ldots,f^{(\ell)}] + z^{L} G^{(L^+)}[f^{(1)},\ldots,f^{(L)};z]
,\quad \forall z<z_0,
\end{equation}
where $G^{(0)} = g(f^{(0)})$, $F^{(\ell)}$ are some polynomials and $F^{(L^+)}$ is some analytic function of its $L+1$ arguments.

\item[(b)] If for some $c>0$, $\alpha\geq 0$, $0<\mu\leq \mu_0$, there holds $|f^{(\ell)}| \leq c \mu^{-\ell \alpha}$ for all $\ell\geq 1$ then
\begin{align} \label{eq:composition_term_est}
\big|G^{(\ell)}[f^{(1)},\ldots,f^{(\ell)}]\big| \leq~& C \mu^{-\ell \alpha}
\qquad\ell=0,\ldots,L-1
\\ \label{eq:composition_residual_est}
\sup_{0<z\leq z_1}\big|G^{(L^+)}[f^{(1)},\ldots,f^{(L)};z]\big| \leq~& C \mu^{-L \alpha}
\end{align}
hold for some $C>0$ and $z_1>0$ that depend on $g$, $L$, $c$, $R$, and $\mu_0^\alpha$.
In particular, $F^{(L^+)}$ is of the class $\C^m$ for any $m\in\bbZ_+$ on the set $\max_{1\leq \ell\leq L} |f^{(\ell)}| \leq c \mu^{-\ell \alpha}/2$ and $|z|\leq z_1/2$.

\end{itemize}
\end{lemma}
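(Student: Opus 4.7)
The plan is to reduce the lemma to a Cauchy estimate for a single analytic function of one variable via a rescaling that absorbs the growth $\mu^{-\ell\alpha}$. Set $\tilde{f}^{(\ell)} := \mu^{\ell\alpha} f^{(\ell)}$, so by hypothesis $|\tilde{f}^{(\ell)}|\leq c$ uniformly in $\mu$, and introduce the rescaled variable $w := z\mu^{-\alpha}$. Then $f(z)-f^{(0)} = \sum_{\ell=1}^{L+1} w^\ell \tilde{f}^{(\ell)}$, and $G(z)=h(w)$, where
\[
h(w) := g\Bigl(f^{(0)}+\sum_{\ell=1}^{L+1} w^\ell \tilde{f}^{(\ell)}\Bigr).
\]
Since $|\tilde{f}^{(\ell)}|\leq c$, a crude triangle-inequality bound on $|h(w)-g(f^{(0)})|$ produces a radius $\rho>0$, depending only on $c$, $L$, $R$, for which the argument of $g$ stays within $R/2$ of $f^{(0)}$ on $|w|\leq \rho$. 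Thus $h$ is analytic on $|w|\leq \rho$ with $\sup_{|w|\leq\rho}|h(w)|\leq M$ for some $M=M(g,c,L,R)$.

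For part (a), $h$ has a convergent Taylor series $h(w)=\sum_{\ell\geq 0} h^{(\ell)} w^\ell$ on $|w|<\rho$, and unwinding the substitution $w=z\mu^{-\alpha}$ gives
\[
G(z) = \sum_{\ell=0}^{L-1} z^\ell\bigl(\mu^{-\ell\alpha} h^{(\ell)}\bigr) + z^L \mu^{-L\alpha}\,h^{(L^+)}(z\mu^{-\alpha}),
\]
where $h^{(L^+)}(w) := w^{-L}\bigl(h(w)-\sum_{\ell<L} h^{(\ell)} w^\ell\bigr)=\sum_{k\geq 0} h^{(k+L)} w^k$. Identifying $G^{(\ell)}$ as a polynomial in $f^{(1)},\ldots,f^{(\ell)}$ follows by substituting the Taylor series of $g$ around $f^{(0)}$ into $G$ and collecting powers of $z$ via the multinomial (Faà di Bruno) formula; $G^{(0)}=g(f^{(0)})$ is immediate.

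For part (b), Cauchy's estimate on the circle $|w|=\rho$ gives $|h^{(\ell)}|\leq M\rho^{-\ell}$, and $G^{(\ell)}=\mu^{-\ell\alpha}h^{(\ell)}$ yields \eqref{eq:composition_term_est}. The explicit tail representation of $h^{(L^+)}$ gives $|h^{(L^+)}(w)|\leq M\rho^{-L}/(1-|w|/\rho)$, which is bounded by $2M\rho^{-L}$ on $|w|\leq \rho/2$. Choosing $z_1$ so that $z\mu^{-\alpha}\leq \rho/2$ holds on the admissible range of $(z,\mu)$ (which pins down the dependence of $z_1$ on $\mu_0^\alpha$) translates this into \eqref{eq:composition_residual_est}. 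The $\C^m$ statement at the end follows from the joint analyticity of $h$ in $(\tilde{f}^{(1)},\ldots,\tilde{f}^{(L+1)},w)$, which is inherited from analyticity of $g$ and the polynomial dependence of its argument on these variables.

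The main technical subtlety is exactly this last bookkeeping: ensuring that the rescaled argument $w=z\mu^{-\alpha}$ stays inside the disk of analyticity of $h$, so that Cauchy estimates apply uniformly. Once this is arranged via the choice of $z_1$, the term-by-term and remainder bounds are standard consequences of Cauchy's inequality for a bounded analytic function with coefficients controlled by $M/\rho^\ell$.
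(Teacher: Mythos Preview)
Your argument is correct and is essentially the same as the paper's, just packaged differently. The paper also proves (a) by substituting the power series of $g$ about $f^{(0)}$ and collecting powers of $z$ (what you call the Fa\`a di Bruno/multinomial step), and proves (b) by a Cauchy estimate: it picks the contour radius $r=\min\{\tfrac13\mu^{\alpha},\tfrac13 R\}$, checks $|f(z)-f^{(0)}|\le R/2$ on $|z|\le r$, and reads off $|G^{(\ell)}|\le (\sup|g|)\,r^{-\ell}\le C\mu^{-\ell\alpha}$ and the analogous remainder bound. Your rescaling $w=z\mu^{-\alpha}$, $\tilde f^{(\ell)}=\mu^{\ell\alpha}f^{(\ell)}$ is exactly the change of variables that converts the paper's $\mu$-dependent Cauchy radius into a fixed disk $|w|\le\rho$ for $h$; the identity $G^{(\ell)}=\mu^{-\ell\alpha}h^{(\ell)}$ then reproduces the paper's bound. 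So the two proofs differ only in whether the $\mu$-scaling sits in the contour radius or in the variable; neither buys anything the other does not.

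One small point worth making explicit in your write-up: the constraint $|w|=|z|\mu^{-\alpha}\le\rho/2$ that you need for the remainder bound forces $z_1\le\tfrac12\rho\,\mu^{\alpha}$, which depends on $\mu$, not just on $\mu_0$. The paper's proof has the identical feature (its bound needs $|z|<r\sim\mu^{\alpha}$). In the paper's applications the parameters $z$ and $\mu$ are coupled so that $z\mu^{-\alpha}\to 0$, and the lemma is only invoked in that regime, so this is harmless; but your parenthetical ``which pins down the dependence of $z_1$ on $\mu_0^{\alpha}$'' slightly overstates what the argument delivers.
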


\begin{proof} 

Without loss of generality, assume $f^{(0)}=0$.

{\it Proof of (a).}
Analyticity of $g(f(z))$ is obvious.
Hence, using the expansion
\[
g(\varphi) = \sum_{j=0}^{\infty} g^{(j)} \varphi^j
,\quad \forall \varphi:|\varphi|<R
\]
where $g^{(j)} = \frac{1}{j!} \del^j g(0)$
we can expand
\begin{align*}
G(z) =~&
g\bigg(
	\sum_{i=1}^{L} z^{i} f^{(i)}
\bigg)
~=~
\sum_{j=0}^{\infty} g^{(j)} \bigg(
	\sum_{i=1}^{L} z^{i} f^{(i)}
\bigg)^j
~=~
\sum_{j=0}^{\infty} g^{(j)}
\prod_{k=1}^j
\bigg(
	\sum_{i=1}^{L} z^{i} f^{(i)}
\bigg)
\\=~&
\sum_{j=0}^{\infty} g^{(j)}
\sum_{i\in\{1,\ldots,L\}^j}
\prod_{k=1}^j
\big(
	z^{i_k} f^{(i_k)}
\big)
~=~
\sum_{\ell=0}^{\infty} z^{\ell}
\sum_{j=0}^{\ell} g^{(j)}
\sum_{\substack{i\in\{1,\ldots,L\}^j \\ \sum_{k=1}^j i_k = \ell}}
\prod_{k=1}^j
f^{(i_k)}
.
\end{align*}
Hence \eqref{eq:composition_expansion} holds with
\begin{align*}
G^{(\ell)}[f^{(1)},\ldots,f^{(\ell)}]
:=~&
\sum_{j=0}^{\ell} g^{(j)}
\sum_{\substack{i\in\{1,\ldots,L\}^j \\ \sum_{k=1}^j i_k = \ell}}
\prod_{k=1}^j
f^{(i_k)}
~=~
\sum_{j=0}^{\ell} g^{(j)}
\sum_{\substack{i\in\{1,\ldots,\ell\}^j \\ \sum_{k=1}^j i_k = \ell}}
\prod_{k=1}^j
f^{(i_k)}
\end{align*}
(note a change of index set for $i$) and
\begin{align*}
G^{(L^+)}[f^{(1)},\ldots,f^{(L)}; z]
:=~&
\sum_{\ell=L}^{\infty} z^{\ell-L}
\sum_{j=0}^{\infty} g^{(j)}
\sum_{\substack{i\in\{1,\ldots,L\}^j \\ \sum_{k=1}^j i_k = \ell}}
\prod_{k=1}^j
f^{(i_k)}.
\end{align*}
The latter expansion has the same majorant as $g(f(z))$, and therefore it is analytic on $[0,z_0]$

{\it Proof of \eqref{eq:composition_term_est}.}
Choose $r:=\min\{\frac13 \mu^{\alpha}, \frac13 R\}$.
Then for $|z| \geq r$, $z\in\bbC$, we have
\[
|f(z)|
\leq \sum_{\ell=1}^{L} c r^\ell
= c r \frac{1-r^{L}}{1-r}
\leq c r \frac{1}{1-\smfrac13}
= \smfrac{3}{2} c r \leq R/2.
\]
Hence using the Cauchy's integral we can represent
\[
G^{(\ell)} = \frac{1}{\ell!}\frac{\dd^\ell}{\dd z^\ell}g(f(z))\Big|_{z=0}
= \frac{1}{2\pi\i} \oint_{|z|=r} g(f(z)) z^{-1-\ell} \dd z
\]
and thus bound for $0<\mu \leq \mu_0$
\[
|G^{(\ell)}| \leq
r^{-\ell} \sup_{|\varphi|\leq R/2} |g(\varphi)|
\leq
\sup_{|\varphi|\leq R/2} |g(\varphi)| \, 3^\ell \, \max\{\mu^{-\alpha \ell},R^{-\alpha}\}
\leq C \mu^{-\alpha\ell}
.
\]

{\it Proof of \eqref{eq:composition_residual_est}.}
One can use a similar representation of the remainder:
\begin{align*}
\big|G^{(L^+)}(z)\big|
=~&
\frac{1}{2\pi\i} \oint_{|\zeta|=r} g(f(\zeta)) z^{-\ell} (z-\zeta)^{-1} \dd \zeta
\end{align*}
and hence bound, for $|z|<z_1 := z_0/2$,
$
\big|G^{(L^+)}(z)\big|
=
C r^{-\ell}
\leq
C \mu^{-\alpha L}.
$
The fact that $F^{(L^+)}$ is $\C^m$-regular for any $m$ follows from its boundedness and analyticity.
\end{proof}

\begin{proof}[Proof of Lemma \ref{lem:inverse_expansion}]
We take two steps. In the first step we note that the statement can be directly verified for $p=2$ as then $F(\theta)$ involves only a scalar product. In the second step we define
\[
\tilde{f}(z) := \|f(z^2)\|^2,
\qquad
g(\varphi) = \varphi^{p/2},
\]
(i.e., here we change variables $z=\sqrt{\theta}$) and apply Lemma \ref{lem:composition_expansion} to $\tilde{f}(z)$ and $g(\varphi)$ with $L+1$ instead of $L$, which proves the statements about $F^{(\ell)}$ and $F^{(1+L^+)}$. The statements on $\tilde{F}^{(\ell)}$ follow from a simple calculation.
\end{proof}

\section{Free energy calculation}\label{sec:Wfree}

In this section we show that \eqref{eq:Wfree} is the harmonic approximation of free energy per atom for an infinite chain, defined to be the limit as $N\to\infty$ for the $N$-periodic lattice.

In this section only, we consider $N$-periodic boundary conditions, instead of the free boundary conditions.
The space of such $N$-periodic lattice functions we denote by $\calU_\per(\calB_N)$, where we define $\calB_N = \{0,1,\ldots,N-1\}$ and $\calL_N = \calB_N + \frac12$.
Consider strains $u_n=\mF + v_n$, where $v\in\calU_\per(\calB_N)$.
Consider a harmonic potential energy in the form
\begin{align*}
E^\lin_{\theta,N}(v)
=~&
	N V(\mF,\mF) +
	\sum_{\xi\in\calL_N} \big(
		\smfrac12 V_{xx}(\mF,\mF) v_{\xi-\half}^2
		+
		V_{xy}(\mF,\mF) u_{\xi-\half} v_{\xi+\half}
		+
		\smfrac12 V_{yy}(\mF,\mF) v_{\xi+\half}^2
	\big)
\\ =~&
	N W(\mF) +
	\sum_{\xi\in\calL_N} \big(
		\smfrac12 \alpha_0 v_{\xi-\half}^2
		+
		\alpha_1 u_{\xi-\half} v_{\xi+\half}
		+
		\smfrac12 \alpha_0 v_{\xi+\half}^2
	\big)
,
\end{align*}
where $W(\mF) := V(\mF, \mF)$ is nothing but the standard Cauchy-Born energy density \cite{BlancLeBrisLions2002, EMing2007static, HudsonOrtner2012} and $\alpha_0 := V_{xx}(\mF,\mF) = V_{yy}(\mF,\mF)$ and $\alpha_1 := V_{xy}(\mF,\mF)$.
Our objective is to compute
\[
W^\free_{\theta,N} = -\frac{\theta}{N} \log\bigg( \int_{\calU_\per(\calB_N)} e^{-\theta^{-1} E(u)} \du \bigg),
\]
which is the free energy per atom (hence division by $N$) in the (formal) limit $N\to\infty$.

Denote by $w_k(x) = \frac1{\sqrt{N}}e^{2\pi i k x/N}$, $k=0,\ldots,N-1$, $x\in\calL_N$, the Fourier basis of $\calU_\per(\calB_N)$.
Then $u = \sum_{k=0}^{N-1} \tilde{u}_k w_k$, where $\tilde{u}_k$ are Fourier coefficients of $u$.
It is straightforward to establish that the orthonormal basis $w_k(x)$ diagonalizes the quadratic form.
We can express $D_r w_k = (e^{2\pi k r/N}-1)$ and hence
\[
E(u) = N V(\mF, \mF) + \sum_{k=0}^{N-1} \varkappa_k \tilde{u}_k^2
\]
where
$
\varkappa_k := \alpha_0 + \alpha_1 \cos(2\pi k/N).
$
Hence
\begin{align*}
W^\free_{\theta,N}
=~&
-\frac{\theta}{N} \log\bigg( \int_{\tilde{u}\in\bbR^N} e^{-\theta^{-1} W(\mF) -\theta^{-1} \sum_{k=0}^{N-1} \varkappa_k \tilde{u}_k^2} \dd\tilde{u} \bigg)
\\ =~&
-\frac{\theta}{N} \log\bigg(e^{-\theta^{-1} N W(\mF)} \int_{\tilde{u}\in\bbR^N} \prod_{k=0}^{N-1} e^{-\theta^{-1} \varkappa_k \tilde{u}_k^2} \dd\tilde{u} \bigg)
\\ =~&
W(\mF)-\frac{\theta}{N} \log\bigg(\prod_{k=0}^{N-1} \int_{\tilde{u}_k\in\bbR^N} e^{-\theta^{-1} \varkappa_k \tilde{u}_k^2} \dd\tilde{u}_k \bigg)
~=~
W(\mF)-\frac{\theta}{N} \log\bigg(\prod_{k=0}^{N-1} \Big(\frac{\pi\theta}{\varkappa_k}\Big)^{1/2} \bigg)
\\ =~&
W(\mF)+\frac{\theta}{2N} \sum_{k=0}^{N-1} \log \Big(\frac{\varkappa_k}{\pi\theta} \Big)
~=~
W(\mF)+\frac{\theta}{2} \log\Big(\frac{1}{\pi\theta}\Big) +\frac{\theta}{2N} \sum_{k=0}^{N-1} \log \varkappa_k
.
\end{align*}

In the limit $N\to\infty$, we have
\begin{align*}
W^\free_\theta(\mF)
=~& \lim_{N\to\infty} W^\free_{\theta,N}
~=~
W(\mF)+\frac{\theta}{2} \log\Big(\frac{1}{\pi\theta}\Big)
+\frac{\theta}{2} \int_0^1 \log (\alpha_0 + \alpha_1 \cos(2\pi \xi))
\dd\xi.
\end{align*}

The integral can be computed via the following lemma which is proved in a subsection below.
\begin{lemma}\label{lem:log_int}
For all $a\in\bbC$, $|a|\leq 1$,
\[
\int_0^1 \log\big(1+a^2-2 a \cos(2\pi \xi)\big) d\xi=0,
\]
where $\log(x)$ is defined to be analytic on $\bbC\setminus \{z\in\bbC : z\leq 0\}$
\end{lemma}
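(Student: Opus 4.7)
The plan is to factor the integrand as a product of two linear factors in $a$ and reduce the identity to a statement about vanishing Fourier modes. Specifically, I would write
\[
1 + a^2 - 2a\cos(2\pi\xi) = (1 - a e^{2\pi i\xi})(1 - a e^{-2\pi i\xi}),
\]
and first establish the result for $|a| < 1$, then extend to $|a|=1$ by continuity. The key observation is that for $|a|<1$ one has $|a e^{\pm 2\pi i \xi}|<1$, so $\Re(1 - a e^{\pm 2\pi i\xi}) > 0$. In particular, each factor has principal argument in $(-\pi/2,\pi/2)$, so their sum lies in $(-\pi,\pi)$ and the principal logarithm is additive:
\[
\log\bigl(1 + a^2 - 2a\cos(2\pi\xi)\bigr) = \log(1 - a e^{2\pi i\xi}) + \log(1 - a e^{-2\pi i \xi}).
\]

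Now I would Taylor-expand each summand via $\log(1 - w) = -\sum_{n\geq 1} w^n/n$, which for $|a|\leq r<1$ converges uniformly in $\xi\in[0,1]$. Adding and integrating term by term yields
\[
\int_0^1 \log\bigl(1+a^2-2a\cos(2\pi\xi)\bigr)\,d\xi
= -\sum_{n\geq 1} \frac{a^n}{n}\int_0^1\bigl(e^{2\pi i n\xi} + e^{-2\pi i n\xi}\bigr)\,d\xi = 0,
\]
since every Fourier mode with $n\geq 1$ integrates to zero over the unit interval. This disposes of the case $|a|<1$.

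The remaining step is to pass to $|a|=1$. Writing $a = |a|e^{i\phi}$, one has the elementary bound $|1 - a e^{\pm 2\pi i\xi}| \leq |1 - e^{i(\phi\pm 2\pi\xi)}| + (1-|a|) + \bigl||a|-1\bigr|$ and, more importantly, $|1 - a e^{\pm 2\pi i\xi}| \geq $ a constant times $|\sin(\pi\xi \pm \phi/2)|$ uniformly in $|a|\leq 1$, so the integrand is dominated by a fixed sum of integrable logarithmic singularities of the form $|\log|2\sin(\pi\xi \pm \phi_0/2)||$. Lebesgue dominated convergence then shows that the integral is continuous in $a$ on the closed disk, and the vanishing on the open disk extends to the boundary.

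\textbf{Main obstacle.} The only delicate point is bookkeeping with the branch of the logarithm: one must verify that the curve $\xi\mapsto 1 + a^2 - 2a\cos(2\pi\xi)$ avoids $(-\infty,0]$ (so that the principal log stated in the lemma is well-defined along it) and that the additive decomposition of the log is the correct principal branch rather than one shifted by $2\pi i$. Both are handled by the right-half-plane observation above. Everything else is either a standard uniformly convergent power series manipulation or a routine dominated-convergence argument at the boundary.
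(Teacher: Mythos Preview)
Your argument is correct and takes a genuinely different route from the paper's. The paper proves the identity by differentiating $F(a)$ under the integral sign, converting $F'(a)$ to a contour integral over the unit circle, and computing the residues at $\zeta=0$ and $\zeta=a$ to see that $F'(a)=0$; since $F(0)=0$, this gives $F\equiv 0$ on the open disk. You instead use the factorization $(1-ae^{2\pi i\xi})(1-ae^{-2\pi i\xi})$, observe that each factor lies in the open right half-plane so the principal logarithm splits additively, and then integrate the Taylor series term by term, killing every nonzero Fourier mode. Your approach is more elementary (no residue calculus) and is essentially the Mahler-measure / Jensen-formula computation for the polynomial $1-az$; the paper's approach is more indirect but avoids having to justify the branch bookkeeping you flag as the main obstacle. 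For the passage to $|a|=1$ both proofs invoke dominated convergence with an integrable logarithmic majorant; your sketch of the lower bound $|1-ae^{\pm 2\pi i\xi}|\geq c\,|\sin(\pi\xi\pm\phi/2)|$ ``uniformly in $|a|\leq 1$'' is not literally correct as stated (take $a$ small), but what you actually need---a fixed integrable majorant along a sequence $a_n\to a_0$ with $|a_0|=1$---follows easily from the fact that the singularities cluster near two fixed points and are logarithmic, which is exactly the level of detail the paper gives as well.
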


Indeed,
\begin{align*}
W^\free_\theta(\mF)
=~&
W(\mF)+\frac{\theta}{2} \log\Big(\frac{1}{\pi\theta}\Big)
+\frac{\theta}{2} \int_0^1 \log (\alpha_0 + \alpha_1 \cos(2\pi \xi))
\dd\xi,
\\ =~&
W(\mF)+\frac{\theta}{2} \log\Big(\frac{1}{\pi\theta}\Big)
-\frac{\theta}{2} \int_0^1 \log \smfrac{2 \left(\alpha_0-\sqrt{\alpha_0^2-\alpha_1^2}\right)}{\alpha_1^2} \, \dd\xi
\\~&
+\frac{\theta}{2} \int_0^1 \log \bigg( \smfrac{2 \left(\alpha_0-\sqrt{\alpha_0^2-\alpha_1^2}\right)}{\alpha_1^2} (\alpha_0 + \alpha_1 \cos(2\pi \xi)) \bigg)
\dd\xi,
\end{align*}
where the last integral can be shown to vanish thanks to Lemma \ref{lem:log_int} applied with $a=\smfrac{\alpha_0-\sqrt{\alpha_0^2-\alpha_1^2}}{\alpha_1}$.

Thus,
\begin{align*}
W^\free_\theta(\mF)
=~&
W(\mF)+\frac{\theta}{2} \log\Big(\frac{1}{\pi\theta}\Big)
-\frac{\theta}{2} \log \smfrac{2 \left(\alpha_0-\sqrt{\alpha_0^2-\alpha_1^2}\right)}{\alpha_1^2}
\\ =~&
W(\mF)+\frac{\theta}{2} \, \log\Big(\frac{1}{2\pi\theta}\Big)
-\frac{\theta}{2} \,
\log \smfrac{V_{xx}(\mF,\mF)-\sqrt{(V_{xx}(\mF,\mF))^2-(V_{xy}(\mF,\mF))^2}}{(V_{xy}(\mF,\mF))^2}
\end{align*}
which coincides with \eqref{eq:Wfree}.

\subsection{Proof of Lemma \ref{lem:log_int}}

\begin{proof}[Proof of Lemma \ref{lem:log_int}]
The stated identity is trivially true for $a=0$.
Denote
\[
F(a) := \int_0^1 \log\big(1+a^2-2 a \cos(2\pi \xi)\big) d\xi
\]
and compute, assuming $|a|<1$,
\[
\frac{\dd F}{\dd a}
=
\int_0^1 \frac{4 \pi a \sin(2\pi \xi)}{1+a^2-2 a \cos(2\pi \xi)} d\xi.
\]
(For $|a|=1$, the integral becomes improper and necessitates for extra care in differentiating.)

We can then turn $\frac{\dd F}{\dd a}$ into an integral over the unit circle $C:=\{z\in\bbC : |z|=1\}$ which can be computed using residues:
\begin{align*}
\frac{\dd F}{\dd a}
=~&
\int_0^1 \frac{-2 \pi i a (e^{2\pi i \xi}-e^{-2\pi i \xi})}{1+a^2-a (e^{2\pi i \xi}+e^{-2\pi i \xi})} d\xi
~=~
\oint_C \frac{-2 \pi i a (\zeta-\zeta^{-1})}{1+a^2-a (\zeta+\zeta^{-1})} \frac{1}{2\pi i \zeta}d\zeta
\\=~&
-2 \pi i a {\rm Res}\Big(
 \frac{ (\zeta-\zeta^{-1})}{1+a^2-a (\zeta+\zeta^{-1})} \frac{1}{2\pi i \zeta}
 , a\Big)
-2 \pi i a {\rm Res}\Big(
 \frac{ (\zeta-\zeta^{-1})}{1+a^2-a (\zeta+\zeta^{-1})} \frac{1}{2\pi i \zeta}
 , 0\Big)
\\=~&
2 \pi i - 2 \pi i = 0
.
\end{align*}
This proves that $F(a)=0$ for all $|a|<1$.

To extend this result to the case $|a|=1$, we need to apply the Lebesgue theorem to the sequence of functions $f_n(\xi) = \log(1+a_n^2-2 a_n \cos(2\pi \xi))$ with $a_n = \frac{n}{n+1} a$ ($n=1,2\ldots$).
Indeed, $f_n(\xi)$ converge to $\log(1+a^2-2 a \cos(2\pi \xi))$ for almost all $\xi$, $-\pi<{\rm Im}(f_n)<\pi$ from the definition of $\log$; ${\rm Re}(f_n)\leq \log(4)$ since $|1+a_n^2-2a_n \cos(2\pi \xi)|<4$; and
\begin{align*}
{\rm Re}(f_n)
\geq~&
\log(|1+a_n^2-2a_n \cos(2\pi \xi)|)
~=~
\log(|a_n|)+
\log(|a_n^{-1}+a_n-2 \cos(2\pi \xi)|)
\\ \geq~&
\log(1/2)+
\smfrac12 \log\big(
	{\rm Im}(a_n^{-1}+a_n)^2+
	{\rm Re}(a_n^{-1}+a_n-2 \cos(2\pi \xi))^2
\big)
\end{align*}
which can be shown, using the standard techniques of calculus, to be bounded below by a function with at most two logarithmic singularities.
Hence Lebesgue theorem applies and hence $F(a)=\lim_{n\to\infty} F(a_n)=0$.
\end{proof}

\bibliographystyle{plain}
\bibliography{atm,finite_temp}

\end{document}